\numberwithin{equation}{section}
\title[Embedding Heisenberg into Euclidean space]{Embedding the Heisenberg group into a bounded dimensional Euclidean space with optimal distortion}
\author[T. Tao]{Terence Tao}
\address[tao@math.ucla.edu]{UCLA Department of Mathematics, Los Angeles, CA 90095-1555.}
\keywords{Heisenberg group, distortion, Nash-Moser iteration}
\theoremstyle{plain}
\newtheorem{theorem}{Theorem}[section]
\newtheorem{proposition}[theorem]{Proposition}
\newtheorem{lemma}[theorem]{Lemma}
\newtheorem{corollary}[theorem]{Corollary}
\newtheorem{claim}[theorem]{Claim}
\theoremstyle{definition}
\newtheorem{remark}[theorem]{Remark}
\newcommand\C{\mathbb{C}}
\newcommand\R{\mathbb{R}}
\newcommand\Z{\mathbb{Z}}
\newcommand\eps{\varepsilon}
\begin{document}

%
%

\begin{abstract}
Let $H \coloneqq \begin{pmatrix} 1 & \R & \R \\ 0 & 1 & \R \\ 0 & 0 & 1 \end{pmatrix}$ denote the Heisenberg group with the usual Carnot-Carath\'eodory metric $d$.  It is known (since the work of Pansu and Semmes) that the metric space $(H,d)$ cannot be embedded in a bilipschitz fashion into a Hilbert space; however, from a general theorem of Assouad, for any $0 < \eps \leq 1/2$, the snowflaked metric space $(H,d^{1-\eps})$ embeds into an infinite-dimensional Hilbert space with distortion $O( \eps^{-1/2} )$. This distortion bound was shown by Austin, Naor, and Tessera to be sharp for the Heisenberg group $H$.  Assouad's argument allows $\ell^2$ to be replaced by $\R^{D(\eps)}$ for some dimension $D(\eps)$ dependent on $\eps$. Naor and Neiman showed that $D$ could be taken independent of $\eps$, at the cost of worsening the bound on the distortion to $O( \eps^{-1-c_D} )$, where $c_D \to 0$ as $D \to \infty$.  In this paper we show that one can in fact retain the optimal distortion bound $O( \eps^{-1/2} )$ and still embed into a bounded dimensional space $\R^D$, answering a question of Naor and Neiman.  As a corollary, the discrete ball of radius $R \geq 2$ in $\Gamma \coloneqq \begin{pmatrix} 1 & \Z & \Z \\ 0 & 1 & \Z \\ 0 & 0 & 1 \end{pmatrix}$ can be embedded into a bounded-dimensional space $\R^D$ with the optimal distortion bound of $O(\log^{1/2} R)$.

The construction is iterative, and is inspired by the Nash-Moser iteration scheme as used in the isometric embedding problem; this scheme is needed in order to counteract a certain ``loss of derivatives'' problem in the iteration.
\end{abstract}

%
%

\section{Introduction}

A map $f \colon (X,d_X) \to (Y,d_Y)$ between two metric spaces $(X,d_X), (Y,d_Y)$ is said to have \emph{distortion} at most $K$ if there exists a constant $C>0$ for which one has the bilipschitz bounds
$$ C d_X(x,x') \leq d_Y( f(x), f(x') ) \leq CK d_X(x,x').$$
There is considerable literature on the question of the optimal distortion in which one can embed a given metric space into a model metric space, such as an infinite-dimensional Hilbert space $\ell^2$, or a finite-dimensional Euclidean space $\R^D$ with the usual metric.  See for instance \cite{semmes}, \cite{lang}, \cite{heinonen} for some discussion.  In this paper we will restrict attention to the problem of embedding the Heisenberg group 
$$ H \coloneqq \begin{pmatrix} 1 & \R & \R \\ 0 & 1 & \R \\ 0 & 0 & 1 \end{pmatrix}$$
(or subsets thereof) into Euclidean spaces.  We make the abbreviation
$$ [x,y,z] \coloneqq \begin{pmatrix} 1 & x & z \\ 0 & 1 & y \\ 0 & 0 & 1 \end{pmatrix}$$
for $x,y,z \in \R$, thus
$$ H = \{ [x,y,z]: x,y,z \in \R \}$$
can be identified (as a set) with $\R^3$, and the group law is given by
$$ [a,b,c] [x,y,z] \coloneqq [x+a, y+b, z+c+ay].$$
This gives the right-invariant vector fields
$$ X \coloneqq \frac{\partial}{\partial x} + y \frac{\partial}{\partial z}; \quad Y \coloneqq \frac{\partial}{\partial y}; \quad Z \coloneqq \frac{\partial}{\partial z}$$
which we can apply to smooth\footnote{We endow $H$ with the usual smooth structure arising from its identification with $\R^3$.} vector-valued functions $\phi \colon H \to \R^D$ in the obvious fashion. We observe the Heisenberg relations
\begin{equation}\label{heisenberg}
Z = YX - XY; \quad XZ = ZX; \quad YZ = ZY.
\end{equation}
We let $d$ be the Carnot-Caratheodory metric on $H$ generated by $X,Y$; thus for $p,p' \in H$, $d(p,p')$ is the infimum of the quantity $|t_1|+\dots+|t_k|$ over all representations
$$ p' = \exp(t_1 W_1) \dots \exp(t_k W_k) p$$
where $t_1,\dots,t_k \in \R$, $W_1,\dots,W_k \in \{X,Y\}$, and $\exp(tW) \colon H \to H$ denotes the flow along the vector field $W$ for time $t$.  For any $0 < \eps < 1$, the snowflaked space $(H, d^{1-\eps})$ is also a metric space.  We will also work with the standard lattice
\begin{equation}\label{gamma-def}
 \Gamma \coloneqq \begin{pmatrix} 1 & \Z & \Z \\ 0 & 1 & \Z \\ 0 & 0 & 1 \end{pmatrix} = \{ [a,b,c]: a,b,c \in \Z \}
\end{equation}
which is a discrete cocompact subgroup of $H$, as well as the \emph{nilmanifold} $H/\Gamma$, which is a smooth compact three-dimensional manifold.

It was observed by Semmes \cite{semmes}, as a corollary of the work of Pansu \cite{pansu} on differentiation in Carnot groups, that there is no bilipschitz embedding of $(H,d)$ into a Hilbert space such as $\ell^2$ or $\R^D$.  On the other hand, just from the fact that $(H,d)$ is a doubling metric space, it follows from the work of Assouad \cite{assouad} that for any $0 < \eps \leq 1/2$, there is a bilipschitz embedding from $(H,d^{1-\eps})$ into $\ell^2$, with distortion\footnote{See Section \ref{notation-sec} for our conventions on asymptotic notation.} $O( \eps^{-1/2} )$.  In \cite{austin}, \cite{neiman} it was shown that this bound on the distortion is optimal in the case of the Heisenberg group, thus any bilipschitz map from $(H,d^{1-\eps})$ into $\ell^2$ or $\R^D$ must have distortion $\gtrsim \eps^{-1/2}$.  In \cite[Corollary 1.6]{austin} it was also shown that any bilipschitz map from the discrete ball $B_\Gamma(0,R) \coloneqq \{ \gamma \in \Gamma: d(0, \gamma) < R \}$ for $R \geq 2$ into $\ell^2$ or $\R^D$ must have distortion $\gtrsim \log^{1/2} R$, where $0 = [0,0,0]$ is the origin in $H$.  Some further explicit embeddings of $(H, d^{1-\eps})$ into $\ell^2$ (or $\ell^p$, $2 \leq p < \infty$) with optimal distortion were constructed in \cite{lee}, \cite{lafforgue}.

Assouad's construction allows the infinite dimensional space $\ell^2$ to be replaced with a finite-dimensional Euclidean space $\R^{D(\eps)}$, where $D(\eps)$ depends on $\eps$ (in fact one can take $D(\eps) = O(\eps^{-1})$).      
In \cite{neiman}, a bilipschitz embedding of $(H,d^{1-\eps})$ into $\R^D$ (again only using the doubling properties of $(H,d)$) with $D$ independent of $\eps$ was constructed; however, the distortion of this map was only bounded by $O( \eps^{-1 - c_D})$ rather than the optimal $O( \eps^{-1/2} )$, where $c_D \to 0$ as $D \to \infty$.

The main result of this paper is to show that one can in fact embed $(H,d^{1-\eps})$ into a bounded dimensional space with the optimal distortion:

\begin{theorem}\label{th1}  There exists a natural number $D$ such that for every $0 < \eps \leq 1/2$, there exists an embedding of $(H,d^{1-\eps})$ into $\R^D$ of distortion $O( \eps^{-1/2} )$.
\end{theorem}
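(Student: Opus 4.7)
The plan is to construct $\phi \colon H \to \R^D$ as a multi-scale sum $\phi = \sum_{n \geq 0} \Phi_n$, where each $\Phi_n \colon H \to \R^{D_0}$ oscillates at Heisenberg scale $2^{-n}$ (i.e.\ has $C^0$-amplitude $\lesssim 2^{-n(1-\eps)}$ and horizontal Lipschitz constant $\lesssim 2^{n\eps}$), and where all the $\Phi_n$ share the \emph{same} fixed target $\R^{D_0}$, so that reusing dimensions across scales yields a total dimension $D \lesssim D_0$ independent of $\eps$. If the $\Phi_n$ can be arranged so that, for each pair $p,p' \in H$, the main contribution to $\|\phi(p)-\phi(p')\|^2$ is made by the single scale $n$ with $2^{-n} \sim d(p,p')$ and equals $\approx 2^{-2n(1-\eps)} \approx d(p,p')^{2(1-\eps)}$, while all other scales contribute strictly less, then the bilipschitz bounds with distortion $O(\eps^{-1/2})$ follow from summing the geometric series in $\eps$.

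A natural first try is to use Heisenberg dilations: set $\Phi_n(p) \coloneqq 2^{-n(1-\eps)} \Psi(\delta_{2^n} p)$ for a single $\Gamma$-periodic template $\Psi \colon H \to \R^{D_0}$, where $\delta_\lambda [x,y,z] \coloneqq [\lambda x, \lambda y, \lambda^2 z]$. The template $\Psi$ should be built from Heisenberg plane waves (trigonometric polynomials tuned to the lattice $\Gamma$ from \eqref{gamma-def}) so that, at unit scale, the local squared-increment $\|\Psi(p) - \Psi(p')\|^2$ behaves like a non-degenerate quadratic form in the horizontal displacement. Rescaling then produces the desired local behavior at every scale.

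The core obstruction is the Heisenberg relation $Z = YX - XY$. Under $\delta_\lambda$ the vertical direction scales as $\lambda^2$ rather than $\lambda$, so detecting $Z$-separation via $\phi$ requires control of second-order horizontal derivatives of the $\Phi_n$. In a naive iteration that adds $\Phi_{n+1}$ to $\phi_n$ this forces each step to consume a derivative more than the previous one: errors controlled at order $k$ at scale $n$ are only controlled at order $k-1$ at scale $n+1$, a factor $2^n$ ``loss of derivatives'' that compounds and would force $D_0$ to grow with $\eps$, recovering only the weaker Naor--Neiman bound $\eps^{-1-c_D}$. This is why a direct sum cannot succeed.

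The remedy is a Nash-Moser iteration. Instead of $\phi_{n+1} = \phi_n + \Phi_{n+1}$, we iterate $\phi_{n+1} = \tilde\phi_n + \Phi_{n+1}$, where $\tilde\phi_n$ is a horizontal mollification of $\phi_n$ at scale $2^{-n-K}$ for a large fixed $K$, and $\Phi_{n+1}$ is the Heisenberg-plane-wave correction computed from $\tilde\phi_n$ rather than $\phi_n$. Smoothing restores uniform $C^k$ bounds on $\tilde\phi_n$ at every stage (in particular controlling $Z\tilde\phi_n = YX\tilde\phi_n - XY\tilde\phi_n$), paying only a mollification error that is rapidly decreasing in $n$ for any fixed number of derivatives, while the quadratic/super-exponential gain of the Newton-type correction dominates the polynomial $2^{n}$ loss from the Heisenberg commutator. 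The main work, and the hardest part, will be to choose $K$, the smoothing operator, and the template $\Psi$ jointly so that: (i) the corrections $\Phi_{n+1}$ all take values in the same bounded-dimensional $\R^{D_0}$; (ii) the Nash-Moser tail errors are subordinate to the main term $2^{-n(1-\eps)}$ uniformly in $\eps \in (0,1/2]$; and (iii) the lower bilipschitz bound is preserved, namely that the contribution of the scale $n$ with $2^{-n} \sim d(p,p')$ remains quantitatively non-degenerate after subtracting all cross-scale errors, which is what yields the sharp $O(\eps^{-1/2})$ distortion.
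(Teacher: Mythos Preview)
Your overall architecture --- a multi-scale sum into a fixed $\R^{D_0}$, with a Nash--Moser mollification to defeat a loss of derivatives --- matches the paper's. But you have misidentified both the mechanism that produces the sharp $\eps^{-1/2}$ and the reason the Nash--Moser scheme is needed.

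The claim that ``the bilipschitz bounds with distortion $O(\eps^{-1/2})$ follow from summing the geometric series in $\eps$'' is false as stated. With your normalization, the coarse scales $n<n_0$ (those with $2^{-n}\gg d(p,p')$) each contribute $|\Phi_n(p)-\Phi_n(p')|\lesssim 2^{n\eps}d(p,p')$, and the triangle inequality sums this to $\sim d(p,p')^{1-\eps}/\eps$, not $d(p,p')^{1-\eps}/\sqrt{\eps}$. To replace the $\ell^1$-sum by an $\ell^2$-sum (and hence $\eps^{-1}$ by $\eps^{-1/2}$) one needs a \emph{pointwise orthogonality} between the horizontal gradients at different scales: the paper imposes, for every $n$, the two scalar equations
\[
X\phi_n(p)\cdot X\psi_n(p)=0,\qquad Y\phi_n(p)\cdot Y\psi_n(p)=0,
\]
where $\psi_n=\sum_{n'>n}A^{-\eps(n'-n)}\phi_{n'}$ is the tail of coarser scales. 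Pythagoras then gives $|\nabla\Psi|^2=\sum_n A^{-2\eps n}|\nabla\phi_n|^2\sim\eps^{-1}$, hence $|\nabla\Psi|\sim\eps^{-1/2}$. This orthogonality constraint is the central new idea; nothing in your proposal arranges it, and no choice of a fixed $\Gamma$-periodic template $\Psi$ together with dilations can produce it, since the required orthogonality is between a scale-$1$ oscillation and an \emph{unknown, $\eps$-dependent} slowly varying function $\psi_n$.

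Correspondingly, the Nash--Moser iteration in the paper is not deployed to ``detect $Z$-separation'' or control second-order derivatives for the lower bound. It is deployed to \emph{solve} the underdetermined first-order system $B(\phi,\psi)=(X\phi\cdot X\psi,\,Y\phi\cdot Y\psi)=0$ for $\phi$, given $\psi$, without losing derivatives when passing from $\psi$ to $\phi$ (since this must be iterated indefinitely in $n$). The loss of derivatives arises because the naive solution involves $T_\psi^{-1}$, which depends on $X\psi,Y\psi,XX\psi,YY\psi$; the paper cures this by solving first the mollified equation $B(\phi_{\le N_0},P_{\le N_0}\psi)=0$ and then deforming via a Littlewood--Paley scheme, exploiting the symmetry of $B$ to write the linearised equation in a form zeroth-order in both $\dot\phi$ and $\dot\psi$. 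Your ``smoothing restores $C^k$ bounds'' narrative does not engage with any of this: the step $\phi_{n+1}=\tilde\phi_n+\Phi_{n+1}$ with a fixed template $\Phi_{n+1}$ never solves the orthogonality equation, and the ``quadratic/super-exponential gain of the Newton-type correction'' is not present --- there is no Newton step in your scheme, only an additive correction. As written, your construction would recover at best the $O(\eps^{-1})$ upper bound.
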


This answers \cite[Question 3]{neiman} in the negative.  On the discrete ball $B_\Gamma(0,R)$ with $R \geq 2$, the metric $d$ is comparable to $d^{1-1/\log R}$, and hence we also obtain

\begin{corollary}\label{th1-cor}  There exists a natural number $D$ such that for every $R \geq 2$, there exists an embedding of $B_\Gamma(0,R)$ (with metric $d$) into $\R^D$ of distortion $O( \log^{1/2} R )$.
\end{corollary}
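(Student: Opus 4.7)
The corollary is essentially immediate from Theorem \ref{th1} once one observes that on the bounded discrete set $B_\Gamma(0,R)$, the original metric $d$ and the snowflake $d^{1-\eps}$ are comparable up to absolute constants provided $\eps$ is chosen of order $1/\log R$. My plan is as follows.

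First I would set $\eps \coloneqq \min(1/\log R,\,1/2)$, so that $\eps \in (0,1/2]$ and $\eps \log R \leq 1$ for every $R \geq 2$. Since $\log R \geq \log 2$, one also has $\eps^{-1/2} = O(\log^{1/2} R)$ uniformly in $R$.

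Second I would verify that $d \asymp d^{1-\eps}$ on $B_\Gamma(0,R)$ with absolute constants. The right-invariance of $d$ together with the fact that $\Gamma$ is discrete in the topology of $(H,d)$ gives a uniform lower bound $d(p,p') \geq c_0$ for any distinct $p,p' \in \Gamma$, where $c_0 \coloneqq \inf_{\gamma \in \Gamma \setminus \{0\}} d(0,\gamma) > 0$ is an absolute constant, while the triangle inequality yields the matching upper bound $d(p,p') \leq 2R$. Writing
$$ d(p,p')^{1-\eps} = d(p,p') \cdot \exp\!\bigl(-\eps \log d(p,p')\bigr), $$
and observing that $\eps \log(2R) \leq 1 + (\log 2)/2$ and $\eps\,|\log c_0| \leq |\log c_0|/2$ are both absolutely bounded, the factor $\exp\!\bigl(-\eps \log d(p,p')\bigr)$ is pinched between two positive absolute constants, which gives the desired comparability.

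Finally I would invoke Theorem \ref{th1} at this value of $\eps$ to obtain an embedding $f : (H, d^{1-\eps}) \to \R^D$ of distortion $O(\eps^{-1/2}) = O(\log^{1/2} R)$, and restrict $f$ to $B_\Gamma(0,R)$. The comparability from the previous step changes the bilipschitz constants only by an absolute multiplicative factor, so the restricted map embeds $(B_\Gamma(0,R), d)$ into $\R^D$ with distortion $O(\log^{1/2} R)$, as required. There is no substantive obstacle in this deduction; all the genuine difficulty of the paper is concentrated in Theorem \ref{th1}, and the present corollary is a matter of bookkeeping.
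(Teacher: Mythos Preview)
Your proposal is correct and is essentially the same argument the paper gives: choose $\eps \sim 1/\log R$, observe that on the discrete ball $B_\Gamma(0,R)$ the metrics $d$ and $d^{1-\eps}$ are comparable up to absolute constants, and then invoke Theorem~\ref{th1}. Your extra care in taking $\eps = \min(1/\log R, 1/2)$ to guarantee $\eps \le 1/2$ for small $R$ is a sensible detail that the paper leaves implicit.
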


As mentioned previously, in \cite{austin} it was shown that this bound of $O(\log^{1/2} R)$ is optimal.

We now describe in informal terms the strategy of proof of Theorem \ref{th1}.  For a given $0 < \eps \leq 1/2$, our objective is to construct a map $\Phi \colon H \to \R^D$ into a bounded dimensional space $\R^D$ obeying the upper bound
\begin{equation}\label{phip}
 |\Phi(p) - \Phi(p')| \lesssim \eps^{-1/2} d(p,p')^{1-\eps}
\end{equation}
for all $p,p' \in H$, as well as the matching lower bound
\begin{equation}\label{pp'}
 |\Phi(p) - \Phi(p')| \gtrsim d(p,p')^{1-\eps}
\end{equation}
for all $p,p' \in H$.  Here and in the sequel we use $|v|$ to denote the norm of a vector $v \in \R^D$ in a Euclidean space.

By taking advantage of the freedom to increase the dimension $D$ by a bounded amount, it will suffice to obtain a map $\Phi_1 \colon H \to \R^{29}$ into a somewhat smaller\footnote{The dimension $29$ is what emerges from our specific form of the argument, but it is likely that this number could be reduced by a more careful analysis if desired.  More generally, we have not attempted to optimise the various numerical exponents in this paper, and one should not be too concerned with their precise values on a first reading.} dimensional space $\R^{29}$ that obeys the upper bound \eqref{phip} for all $p,p'$, and verifies the lower bound \eqref{pp'} just for a sparse range of distance scales $d(p,p')$, say when
$$ A^{n-0.1} \leq d(p,p') \leq 2 A^{n-0.1}$$
for an integer $n$ and a very large absolute constant $A>1$.  One can then hope to build $\Phi_1$ by a Weierstrass function type construction\footnote{To make the sum converge in the limit $n \to -\infty$ we will subtract off the constant $\phi_n(0)$ from $\phi_n(p)$ in the actual argument.}
$$ \Phi_1(p) \coloneqq \sum_{n = -\infty}^\infty A^{-\eps n} \phi_n(p)$$
where for each natural number $n$, $\phi_n \colon H \to \R^{29}$ is a function of magnitude $O(A^n)$ that ``oscillates'' at spatial scales $A^n$, analogously to the lacunary plane waves $x \mapsto A^n e^{i x / A^n}$ from $\R$ to $\C$ that one might see in the classical Weierstrass function construction; see Theorem \ref{th3} below for a precise statement.  This will establish the lower bound \eqref{pp'} by a Taylor expansion argument as long as the functions $\phi_{\geq n_0} := \sum_{n \geq n_0} A^{-\eps(n-n_0)} \phi_n(p)$ are quantitatively immersions in the sense that the wedge product 
$$ \bigwedge_{W =X,Y,Z} W \phi_{\geq n_0}(p) = X \phi_{\geq n_0}(p) \wedge Y \phi_{\geq n_0}(p) \wedge Z \phi_{\geq n_0}(p) \in \bigwedge^3 \R^{29}$$ 
is almost as large as possible; again, see Theorem \ref{th3} for a precise statement.

It remains to establish the upper bound.  This essentially amounts to obtaining good upper bounds for the magnitude of the gradient
$$ \sum_{n \geq n_0} A^{-\eps(n-n_0)} \nabla \phi_n(p)$$
uniformly in $n_0 \in \Z$ and $p \in H$, where $\nabla \phi \coloneqq (X \phi, Y \phi)$.  The triangle inequality will give a bound of the form $O(\eps^{-1})$, which corresponds roughly speaking to the results that of the bounds of Naor and Neiman \cite{neiman}, though we do not claim a simple way to reprove their results by our methods without going through most of the proof of Theorem \ref{th1}.  To improve this to $O(\eps^{-1/2})$, we will impose the orthogonality condition that each component $\nabla \phi_{n_0}(p)$ of the gradient is pointwise orthogonal to the more slowly varying function $\sum_{n > n_0} A^{-\eps(n-n_0)} \nabla \phi_n(p)$ for every $n_0$.  The desired bound of $O(\eps^{-1/2})$ will then easily follow from an induction argument and Pythagoras' theorem.

The main issue is then how to construct the functions $\phi_n$ in such a way that this orthogonality condition is satisfied.  After a rescaling, the problem reduces to one which can be informally described as follows: given a smooth, slowly varying function $\psi \colon H \to \R^{29}$ whose derivatives obey certain quantitative upper and lower bounds, construct a smooth oscillating function $\phi \colon H \to \R^{29}$, whose derivatives also obey certain quantitative upper and lower bounds, such that the bilinear form $B(\phi,\psi) \colon H \to \R^2$ defined by
\begin{equation}\label{bilinear-def}
 B(\phi,\psi) \coloneqq (W \phi \cdot W \psi)_{W = X,Y} = ( X \phi \cdot X \psi, Y \phi \cdot Y \psi )
\end{equation}
(with $\cdot  \colon \R^{29} \times \R^{29} \to \R$ being the usual dot product) vanishes identically:
\begin{equation}\label{b-eq}
B(\phi,\psi) = 0.
\end{equation}
We view this as an underdetermined system of differential equations for $\phi$ (two equations in $29$ unknowns).
The trivial solution $\phi=0$ to this equation will be inadmissible for our purposes due to the lower bounds we will require on $\phi$ (in order to obtain the quantitative immersion property mentioned previously, as well as for a stronger ``freeness'' property that is needed to close the iteration).  Because this construction will need to be iterated, it will be essential that the regularity control on $\phi$ is the same as that on $\psi$; one cannot afford to ``lose derivatives'' when passing from $\psi$ to $\phi$.  If one was embedding into an infinite dimensional space $\ell^2$, one could easily solve \eqref{b-eq} by ensuring that $\phi,\psi$ take values in orthogonal finite-dimensional subspaces of $\ell^2$; the difficulty is to solve this equation instead in the bounded dimensional setting of $\R^{29}$, in a fashion that allows for an indefinite amount of iteration.

This problem has some formal similarities with the isometric embedding problem \cite{gunther}, which can be viewed as the problem of constructing smooth solutions to an equation of the form $Q(\phi,\phi) = g$, where $(M,g)$ is a Riemannian manifold and $Q = (Q_{ij})_{1 \leq i,j \leq d}$ is the matrix-valued bilinear form
$$ Q(\phi,\psi)_{ij} = \partial_i \phi \cdot \partial_j \psi.$$
The isometric embedding problem also has the key obstacle that naive attempts to solve the equation $Q(\phi,\phi)=g$ iteratively can lead to an undesirable ``loss of derivatives'' that prevents one from iterating indefinitely.  This obstacle was famously 
resolved by the Nash-Moser iteration scheme \cite{nash}, \cite{moser} in which one alternates between perturbatively adjusting an approximate solution to improve the residual error term, and mollifying the resulting perturbation to counteract the loss of derivatives.  The current equation \eqref{b-eq} differs in some key respects from the isometric embedding equation $Q(\phi,\phi)=g$, in particular being linear in the unknown field $\phi$ rather than quadratic; nevertheless the key obstacle is the same, namely that naive attempts to solve either equation lose derivatives.  Our approach to solving \eqref{b-eq} will draw heavy inspiration\footnote{The iterative construction used here also bears some resemblance to the iterative construction used in Uchiyama's constructive proof \cite{uchiy} of the Fefferman-Stein decomposition theorem for functions of bounded mean oscillation.} from the Nash-Moser iteration technique, though it will not precisely use any of the standard forms of the Nash-Moser argument in the literature.

To motivate this iteration, we first express $B(\phi,\psi)$ using the product rule in a form that does not place derivatives directly on the unknown $\phi$:
\begin{equation}\label{b-expand}
 B(\phi,\psi) = \left( W(\phi \cdot W \psi) - \phi \cdot WW \psi\right)_{W = X,Y}
\end{equation}
This reveals that one can construct solutions $\phi$ to \eqref{b-eq} by solving the system of equations
\begin{equation}\label{phis}
\phi \cdot W \psi = \phi \cdot WW \psi =  0
\end{equation}
for $W \in \{X, Y \}$.  Because this system is zeroth order in $\phi$, this can easily be done by linear algebra (even in the presence of a forcing term $B(\phi,\psi)=F$) if one imposes a ``freeness'' condition (analogous to the notion of a free embedding in the isometric embedding problem) that $X \psi(p), Y \psi(p), XX \psi(p), YY \psi(p)$ are linearly independent at each point $p$, which one then adds to the list\footnote{For technical reasons, it will in fact be convenient to impose the stronger condition that the six vectors $X \psi(p)$, $Y \psi(p)$, $Z \psi(p)$, $XX \psi(p)$, $YY \psi(p)$, $XY \psi(p)$ are linearly independent.} of upper and lower bounds required on $\psi$ (with a related bound then imposed on $\phi$, in order to close the iteration).  However, as mentioned previously, there is a ``loss of derivatives'' problem with this construction: due to the presence of the differential operators $W$ in \eqref{phis}, a solution $\phi$ constructed by this method can only be expected to have two degrees less regularity than $\psi$ at best, which makes this construction unsuitable for iteration.

To get around this obstacle (which also prominently appears when solving (linearisations of) the isometric embedding equation $Q(\phi,\phi)=g$), we instead first construct a smooth, low-frequency solution $\phi_{\leq N_0} \colon H \to \R^{29}$ to a low-frequency equation
\begin{equation}\label{b-ortho}
B( \phi_{\leq N_0}, P_{\leq N_0} \psi ) = 0
\end{equation}
where $P_{\leq N_0} \psi$ is a mollification of $\psi$ (of Littlewood-Paley type) applied at a small spatial scale $1/N_0$ for some $N_0$, and then gradually relax the frequency cutoff $P_{\leq N_0}$ to deform this low frequency solution $\phi_{\leq N_0}$ to a solution $\phi$ of the actual equation \eqref{b-eq}.  

We will construct the low-frequency solution $\phi_{\leq N_0}$ rather explicitly, using the Whitney embedding theorem to construct an initial oscillating map $f$ into a very low dimensional space $\R^5$, composing it with a Veronese type embedding into a slightly larger dimensional space $\R^{20}$ to obtain a required ``freeness'' property, and then composing further with a slowly varying isometry $U(p) \colon \R^{20} \to \R^{29}$ depending on $P_{\leq N_0} \psi$ and constructed by a quantitative topological lemma (relying ultimately on the vanishing of the first few homotopy groups of high-dimensional spheres), in order to obtain the required orthogonality \eqref{b-ortho}; see Sections \ref{top-sec}, \ref{conclude} for details.

To perform the deformation of $\phi_{\leq N_0}$ into $\phi$, we must solve what is essentially the linearised equation
\begin{equation}\label{b-diff}
 B( \dot \phi, \psi ) + B( \phi, \dot \psi ) = 0
\end{equation}
of \eqref{b-eq} when $\phi$, $\psi$ (viewed as low frequency functions) are both being deformed at some rates $\dot \phi, \dot \psi$ (which should be viewed as high frequency functions).  To avoid losing derivatives, the magnitude of the deformation $\dot \phi$ in $\phi$ should not be significantly greater than the magnitude of the deformation $\dot \psi$ in $\psi$, when measured in the same function space norms.  For technical reasons, it will in fact be more convenient to use a discrete-time iteration rather than a continuous-time iteration, where the analogue of the time parameter is the Littlewood-Paley frequency parameter $N$ (or a logarithm thereof), but for sake of this informal discussion we will focus on the continuous-time equation \eqref{b-diff}.

As before, if one directly solves the difference equation \eqref{b-diff} using a naive application of \eqref{b-expand} with $B(\phi,\dot \psi)$ treated as a forcing term, one will lose at least one derivative of regularity when passing from $\dot \psi$ to $\dot \phi$.  However, observe that \eqref{b-expand} (and the symmetry $B(\phi, \dot \psi) = B(\dot \psi,\phi)$) can be used to obtain the identity
\begin{equation}\label{b-ident}
 B( \dot \phi, \psi ) + B( \phi, \dot \psi ) = \left( W(\dot \phi \cdot W \psi + \dot \psi \cdot W \phi) - (\dot \phi \cdot WW \psi + \dot \psi \cdot WW \phi)\right)_{W = X,Y}
\end{equation}
and then one can solve \eqref{b-diff} by solving the system of equations
$$ \dot \phi \cdot W \psi = - \dot \psi \cdot W \phi$$
for $W \in \{X,XX,Y,YY\}$.  The key point here is that this system is zeroth order in both $\dot \phi$ and $\dot \psi$, so one can solve this system without losing any derivatives when passing from $\dot \psi$ to $\dot \phi$; compare this situation with that of the superficially similar system
$$ \dot \phi \cdot W \psi = - \phi \cdot W \dot \psi $$
that one would obtain from naively linearising \eqref{phis} without exploiting the symmetry\footnote{This symmetry exploiting trick however comes with a cost: we were unable to use this scheme to also impose the orthogonality conditions $X \phi \cdot Y \psi = 0$ and $X \psi \cdot Y \phi = 0$, which would otherwise have been quite useful in ensuring that the function $\psi$ retains the required freeness and immersion properties upon iteration; this is because each of these equations fails to be symmetric on $\phi$ and $\psi$.  Instead, we will have to perform a delicate analysis of how the wedge product $X \psi \wedge Y \psi$ evolves as one replaces $\psi$ with $\psi+\phi$, relying in particular on a careful computation of components of a certain pseudoinverse matrix.}
 of $B$.  There is still however one residual ``loss of derivatives'' problem arising from the presence of a differential operator $W$ on the $\phi$ term, which prevents one from directly evolving this iteration scheme in time without losing regularity in $\phi$.  It is here that we borrow the final key idea of the Nash-Moser scheme, which is to replace $\phi$ by a mollified version $P_{\leq N} \phi$ of itself (where the frequency scale $N$ of the projection $P_{\leq N}$ depends on the time parameter).  This creates an error term in \eqref{b-diff}, but it turns out that this error term is quite small and smooth (being a ``high-high paraproduct'' of $\nabla \phi$ and $\nabla\psi$, it ends up being far more regular than either $\phi$ or $\psi$, even with the presence of the derivatives) and can be iterated away provided that the initial frequency cutoff $N_0$ is large and the function $\psi$ has a fairly high (but finite) amount of regularity (we will eventually use the H\"older space $C^{20,\alpha}$ to measure this).

It seems likely that this method can extend to other Carnot groups than $H$, and perhaps even to arbitrary nilpotent Lie groups.  Certainly the case of Carnot groups of nilpotency class $2$ (such as higher dimensional Heisenberg groups) should follow by a straightforward adaptation of the arguments in this paper.  However, we will not pursue these generalisations here.

\begin{remark} We briefly discuss\footnote{We thank Assaf Naor for these observations, and an anonymous commenter on the author's blog for pointing out the breakdown of the arguments in this paper for $\eps$ close to $1$.} the situation of extremely snowflaked metrics $(H,d^{1-\eps})$ with $1/2 < \eps \leq 1$.  Such spaces have Hausdorff dimension $\frac{4}{1-\eps}$ and so can only be embedded in a fashion bilipschitz into $\R^D$ if $D \geq \frac{4}{1-\eps}$, regardless of distortion.  On the other hand, by first embedding $(H, d^{1/2})$ in a bilipschitz fashion into $\R^D$ for a fixed $D$ (either by the results of this paper, or earlier results such as \cite{neiman}) and then embedding $(\R^D, d_{\R^D}^{2(1-\eps)})$ with bounded distortion into $\R^{D'}$ with $D' = O( \frac{D}{1-\eps})$ using the constructions in \cite{kahane}, \cite{talagrand}, one can embed $(H,d^{1-\eps})$, $1/2 < \eps \leq 1$ with bounded distortion into a Euclidean space of dimension $O( \frac{1}{1-\eps})$.
\end{remark}

\section{Acknowledgments}

We thank Assaf Naor for suggesting this problem and providing many useful comments.  We are indebted to the anonymous referee for an extremely thorough reading of the manuscript and many useful corrections and suggested improvements.

\section{Notation}\label{notation-sec}

If $\R^{D_1}, \R^{D_2}$ are vector spaces the space of linear maps from $\R^{D_1}$ to $\R^{D_2}$ will be identified with $\R^{D_1 D_2}$ in the obvious fashion, with the Euclidean norm on the latter being the Frobenius norm on the former.  Thus if $T \colon \R^{D_1} \to \R^{D_2}$ is a linear map, $|T|$ will denote its Frobenius norm.

In a similar vein, the exterior power $\bigwedge^k \R^D$ of a vector space $\R^D$ with $1 \leq k \leq D$ can be identified with $\R^{\binom{D}{k}}$ (with orthonormal basis $e_{i_1} \wedge \dots \wedge e_{i_k}$ with $1 \leq i_1 < \dots < i_k \leq D$, and in particular the Euclidean norm on the former is inherited by the latter.  We observe the \emph{depolarised Cauchy-Binet formula}
\begin{equation}\label{depolar}
\langle u_1 \wedge \dots \wedge u_k, v_1 \wedge \dots \wedge v_k \rangle = \det (u_i \cdot v_j)_{1 \leq i,j \leq k},
\end{equation}
where $\langle, \rangle$ denotes the inner product on $\bigwedge^k \R^D$ associated to the above Euclidean structure; this is easily verified from multilinearity by checking the case when all of the $u_i,v_j$ are drawn from the standard basis $e_1,\dots,e_D$.  Specialising to the case $u_i=v_i$, we obtain the more traditional \emph{Cauchy-Binet formula}
\begin{equation}\label{cauchy-binet}
 \det(TT^*) = \det(v_i \cdot v_j)_{1 \leq i,j \leq k} = |v_1 \wedge \dots \wedge v_k|^2.
\end{equation}
for any $v_1,\dots,v_k \in \R^D$, where $T \colon \R^D \to \R^k$ denotes the linear map
$$ T(u) \coloneqq (u \cdot v_1, \dots, u \cdot v_k).$$
This identity makes quantitative the standard fact that $T$ is full (row) rank if and only if $v_1,\dots,v_k$ are linearly independent.  For instance, the $k=2$ case of \eqref{cauchy-binet} is the Lagrange identity
$$ |v_1 \wedge v_2|^2 = |v_1|^2 |v_2|^2 - |v_1 \cdot v_2|^2.$$

We use the asymptotic notation $A \lesssim B$ or $A = O(B)$ to denote the bound $|A| \leq CB$ for a constant $C$, and write $A \sim B$ for $A \lesssim B \lesssim A$.  If we need $C$ to depend on parameters, we will indicate this by subscripts, for instance $A \lesssim_{C_0, N_0} B$ denotes an estimate of the form $|A| \leq C(C_0,N_0) B$ where the implied constant $C(C_0,N_0)$ depends only on $C_0$ and $N_0$.  This notation will be extended to linear maps or elements of the exterior algebra using the norms indicated above.

If $\phi \colon H \to \R^D$ is a smooth function, we let $\nabla \phi \colon H \to \R^{2D}$ denote the Heisenberg gradient
$$ \nabla \phi \coloneqq (X \phi, Y \phi);$$
iterating this, we have $\nabla^k \phi \colon  H \to \R^{2^k D}$ for any $k \geq 1$.  We then define the $C^0$ norm 
$$ \| \phi \|_{C^0} \coloneqq \sup_{p \in H} |\phi(p)|$$
and more generally the $C^k$ norm
$$ \| \phi \|_{C^k} \coloneqq \sum_{0 \leq j \leq k} \| \nabla^j \phi \|_{C^0}$$
for any natural number $k$; more generally, for any spatial scale $R>0$, we define the $C^k_R$ norm
$$ \| \phi \|_{C^k_R} \coloneqq \sum_{0 \leq j \leq k} R^j \| \nabla^j \phi \|_{C^0}$$
which is a rescaled version of the $C^k$ norm that is adapted to the spatial scale $R$.
For technical reasons we will eventually need to work with H\"older spaces (which are better behaved with respect to Littlewood-Paley decompositions than more classical spaces such as $C^k$).  We fix a H\"older exponent $0 < \alpha < 1$ (e.g., one can take $\alpha \coloneqq 1/2$ throughout this paper), and allow all implied constants to depend on $\alpha$.  We define the homogeneous H\"older norm
$$ \| \phi \|_{\dot C^{0,\alpha}} \coloneqq \sup_{p,q \in H: p \neq q} \frac{|\phi(p)-\phi(q)|}{d(p,q)^\alpha},$$
defined (though possibly infinite) for all smooth $\phi \colon  H \to \R^D$.  We then define the higher H\"older norms
$$ \| \phi \|_{C^{k,\alpha}} \coloneqq \| \phi \|_{C^k} + \| \nabla^k \phi \|_{\dot C^{0,\alpha}}$$
for $k \geq 0$ and smooth $\phi \colon  H \to \R^D$, and more generally define the rescaled H\"older norms
$$ \| \phi \|_{C^{k,\alpha}_R} \coloneqq \| \phi \|_{C^k_R} + R^{k+\alpha} \| \nabla^k \phi \|_{\dot C^{0,\alpha}}$$
for any $k \geq 0$ and $R>0$, and smooth $\phi \colon  H \to \R^D$.

By many applications of the product rule, one can verify the algebra properties
\begin{equation}\label{algebra-1}
 \| \phi \psi\|_{C^{k}_R} \lesssim_k \| \phi \|_{C^{k}_R} \| \psi \|_{C^{k}_R}.
\end{equation}
and
\begin{equation}\label{algebra-2} \| \phi \psi\|_{C^{k,\alpha}_R} \lesssim_k \| \phi \|_{C^{k,\alpha}_R} \| \psi \|_{C^{k,\alpha}_R}.
\end{equation}
for any smooth $\phi, \psi \colon H \to \R$, $k \geq 0$, and $R>0$.  Similarly if $\phi, \psi$ are vector-valued instead of scalar-valued, and one forms the wedge product or dot product instead of the pointwise product; observe that the implied constants here will not depend on the dimension of the vector space that $\phi$ or $\psi$ ranges in (because the Cauchy-Schwarz inequalities for such products do not contain dimension-dependent constants).

For any $\lambda > 0$, define the scaling maps $\delta_\lambda \colon H \to H$ by
$$ \delta_\lambda[x,y,z] \coloneqq [\lambda x, \lambda y, \lambda^2 z];$$
these are automorphisms of $H$ that obey the scaling law
\begin{equation}\label{scaling}
d(\delta_\lambda(p), \delta_\lambda(p')) = \lambda d(p,p')
\end{equation}
for all $p,q \in H$, as well as the chain rules
\begin{equation}\label{chain}
\begin{split}
X ( \phi \circ \delta_\lambda) &= \lambda (X\phi) \circ \delta_\lambda \\
Y ( \phi \circ \delta_\lambda) &= \lambda (Y\phi) \circ \delta_\lambda \\
Z ( \phi \circ \delta_\lambda) &= \lambda^2 (Z\phi) \circ \delta_\lambda 
\end{split}
\end{equation}
for any smooth $\phi \colon  H \to \R^D$.  One can think of $X,Y$ as being ``first-order'' with respect to this scaling family $(\delta_\lambda)_{\lambda>0}$, while $Z=YX-XY$ should be thought of as being ``second-order'', despite being a first-order differential operator.  
From iterating \eqref{chain} we have
\begin{equation}\label{chain-iter}
 \nabla^k (\phi \circ \delta_\lambda) = \lambda^k (\nabla^k \phi) \circ \delta_\lambda
\end{equation}
for any $k \geq 1$ and $\lambda>0$, and any smooth $\phi \colon  H \to \R^D$.  

From \eqref{chain-iter}, \eqref{scaling} one observes the scaling laws
\begin{equation}\label{scaling-1}
 \| \phi \circ \delta_\lambda \|_{C^k_R} = \| \phi \|_{C^k_{\lambda R}}
\end{equation}
and
\begin{equation}\label{scaling-2}
 \| \phi \circ \delta_\lambda \|_{C^{k,\alpha}_R} = \| \phi \|_{C^{k,\alpha}_{\lambda R}}
\end{equation}
for all smooth $\phi \colon  H \to \R^D$, $k \geq 0$, $\lambda>0$, and $R>0$.

A \emph{dyadic number} is a number of the form $2^n$, where $n$ is an integer; these are the scales we will use for Littlewood-Paley decompositions, which we discuss in Section \ref{lp-sec}.

\section{Reduction to constructing a lacunary family of oscillating functions}

In this section we reduce Theorem \ref{th1} to the task of finding a family of functions $\phi_n$ that oscillate at different scales $A^n$, and obey an orthogonality condition.

For the rest of the paper, we select absolute constants in the following order:
\begin{itemize}
\item A sufficiently large natural number $C_0>1$.   (This is a general-purpose constant used to make explicit the bounds in certain inductive hypotheses.)
\item A sufficiently large dyadic number $N_0$ (depending on $C_0$).  (This is a large frequency scale at which we initialise a certain Nash-Moser type iteration.)
\item A sufficiently large dyadic number $A$ (depending on $C_0, N_0$).  (This very large quantity controls the sparsity of a certain family of scales that we will control in our construction.)
\end{itemize}
Observe that any quantity depending on earlier quantities in this hierarchy can be bounded by quantities later in this hierarchy; for instance, if $Q$ is a quantity depending on $C_0$ and $N_0$, then we have $Q \lesssim_{N_0} 1$ and $Q \leq \log\log A$.  We will use these sorts of manipulations in the sequel without further comment.

To show Theorem \ref{th1}, we may assume the technical condition
$$ \eps \not \in [1/A, 1/\log^{1/2} A]$$
since if $\eps$ falls into this interval, we may simply replace $A$ by (say) $e^{A^2}$ to avoid this range.

It will suffice to establish the Lipschitz lower bound on a sparse set of scales, namely it suffices to construct (for each $0 < \eps \leq 1/2$ avoiding $[1/A, 1/\log^{1/2} A]$) a map $\Phi_1 \colon H \to \R^{29}$ obeying the Lipschitz upper bound
\begin{equation}\label{nup}
 | \Phi_1(p) - \Phi_1(p')| \lesssim_{A} \eps^{-1/2} d(p,p')^{1-\eps} 
\end{equation}
for all $p,p' \in H$, and the Lipschitz lower bound
\begin{equation}\label{nlp} 
| \Phi_1(p) - \Phi_1(p') | \gtrsim_{A} d(p,p')^{1-\eps}
\end{equation}
whenever $p,p' \in H$ are such that $A^{n_0-0.1} \leq d(p,p') \leq 2 A^{n_0-0.1}$ for some integer $n_0$.  Indeed, suppose that such a map has been constructed.  Then if we write $A = 2^M$, one can easily verify using \eqref{scaling} that the map $\phi \colon  H \to \R^{29M}$ defined by
\begin{equation}\label{Phi-def}
 \Phi(p) \coloneqq \left( \Phi_1( \delta_{2^m}(p) )\right)_{m=0}^{M-1}
\end{equation}
obeys the upper bound
$$ | \Phi(p) - \Phi(p')| \lesssim_{A} \eps^{-1/2} d(p,p')^{1-\eps} $$
and the lower bound
$$ | \Phi(p) - \Phi(p')| \gtrsim_{A} d(p,p')^{1-\eps} $$
for all $p,p' \in H$, thus giving Theorem \ref{th1} (after choosing the parameters $C_0,N_0,A$, and setting $D \coloneqq 29M$).

To construct the map $\Phi_1$, we construct the following family of oscillating functions.

\begin{theorem}[Maps oscillating at lacunary scales]\label{th3}  Let $0 < \eps \leq 1/2$ avoid the range $[1/A,1/\log^{1/2} A]$.  Then one can find a smooth map $\phi_n \colon H \to \R^{29}$ for each integer $n$ obeying the following bounds:
\begin{itemize}
\item (Smoothness at scale $A^n$) For all integers $n$, one has
\begin{equation}\label{smooth}
\| \phi_n \|_{C^6_{A^n}} \lesssim_{C_0} A^n.
\end{equation}
In particular, we have 
\begin{equation}\label{nabla}
X \phi_n(p), Y \phi_n(p) = O_{C_0}(1); \quad Z \phi_n(p) = O_{C_0}(A^{-n})
\end{equation}
for all $p \in H$.
\item (Orthogonality)  If $\eps \leq 1/A$, then for all integers $n$, one has
\begin{equation}\label{ortho-1}
\sum_{n' > n} A^{-\eps(n'-n)} B( \phi_n, \phi_{n'} ) = 0
\end{equation}
identically on $H$, where $B$ is the bilinear form \eqref{bilinear-def}.  Note that the sum in \eqref{ortho-1} converges absolutely thanks to \eqref{nabla}.
\item (Non-degeneracy and immersion)  
For all integers $n$ and all $p \in H$, one has
\begin{equation}\label{nondeg-1}
|X \phi_n(p)|, |Y \phi_n(p)| \gtrsim_{C_0} 1
\end{equation}
and 
\begin{equation}\label{nondeg}
 \left|\bigwedge_{W = X,Y,Z} \left(\sum_{n' \geq n} A^{-\eps(n'-n)} W \phi_{n'}(p)\right)\right| \gtrsim_{C_0} A^{-n} \sum_{n' \geq n} A^{-2\eps(n'-n)} 
\end{equation}
Again, the sums in \eqref{nondeg} converge absolutely thanks to \eqref{nabla}. 
\end{itemize}
\end{theorem}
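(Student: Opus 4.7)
The plan is to construct the family $\{\phi_n\}_{n \in \Z}$ by a descending induction on $n$, where the induction step is performed via a Nash-Moser style iteration in order to overcome the ``loss of derivatives'' obstacle built into the orthogonality constraint \eqref{ortho-1}.

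\textbf{Reduction to a single step.} Fix $n$ and suppose $\phi_{n'}$ has been built for every $n' > n$. Set
\[ \psi_n \coloneqq \sum_{n' > n} A^{-\eps (n'-n)} \phi_{n'}, \]
understood as a function whose gradient converges absolutely by \eqref{nabla}; thus $\nabla \psi_n$ is a slowly varying vector field, with analogous control on its higher derivatives after rescaling to the scale $A^n$. The single-step problem is then: produce $\phi_n$ oscillating at scale $A^n$ satisfying the smoothness \eqref{smooth}, the non-degeneracy \eqref{nondeg-1}, a uniform ``freeness'' property (linear independence of the six vectors $X\phi_n, Y\phi_n, Z\phi_n, XX\phi_n, YY\phi_n, XY\phi_n$), and, whenever $\eps \leq 1/A$, the orthogonality $B(\phi_n, \psi_n) = 0$. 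When $\eps > 1/A$ no orthogonality is demanded and a fixed rescaled model function works. The lack of a base case for the induction is handled by truncating to $\phi_{n'} = 0$ for $n' > N$, establishing uniform estimates, and passing $N \to \infty$; by \eqref{scaling-1}--\eqref{scaling-2} one may additionally renormalize to $n=0$.

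\textbf{Construction of a low-frequency seed.} Fix a large dyadic cutoff $N_0$ and first produce $\phi^{(0)} \colon H \to \R^{29}$ satisfying the \emph{mollified} equation $B(\phi^{(0)}, P_{\leq N_0} \psi_n) = 0$. Following the sketch in the introduction, one takes an oscillating Whitney-type map $f \colon H \to \R^5$ at unit scale (built from an explicit coordinate chart on $H/\Gamma$ followed by a Whitney embedding of this compact nilmanifold), composes with a Veronese-type embedding $\R^5 \hookrightarrow \R^{20}$ to ensure the six jet vectors above are uniformly linearly independent, and finally applies a slowly varying orthogonal map $U(p) \in \mathrm{O}(29)$ sending $\R^{20}$ into the $(\geq 16)$-dimensional $B(\,\cdot\,, P_{\leq N_0} \psi_n)$-annihilator at $p$. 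The existence of such a $U$ with controlled regularity is a selection statement for a bundle with fibres modelled on a Stiefel manifold $\mathrm{O}(29)/\mathrm{O}(20)$; the gap $29-20$ is chosen large enough that the first few homotopy groups vanish, making the selection solvable uniformly by a quantitative topological lemma.

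\textbf{Nash-Moser iteration and closing the induction.} Pass from $\phi^{(0)}$ to a genuine solution of $B(\phi_n,\psi_n)=0$ by iterating over dyadic frequency scales $N > N_0$, at each step adding a high-frequency correction $\dot\phi_N$ that approximately solves the linearized equation \eqref{b-diff} with $\dot\psi = P_N \psi_n$. Using the symmetrization \eqref{b-ident}, this reduces to the algebraic system
\[ \dot\phi \cdot W \psi \;=\; -\, \dot\psi \cdot W \phi \qquad (W \in \{X, Y, XX, YY\}), \]
which is zeroth-order in $\dot\phi$ and solvable by the freeness of the current $\psi$. Crucially, on the right hand side $\phi$ is replaced by its Littlewood-Paley mollification $P_{\leq N} \phi$ before each step; the resulting defect is a ``high-high paraproduct'' of gradients which is far smoother than either input and is summable in $N$ provided $\psi$ lies in a H\"older class of sufficiently high order (e.g.\ $C^{20,\alpha}_{A^n}$), yielding $\|\phi_n\|_{C^6_{A^n}} \lesssim_{C_0} A^n$ in the limit. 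The non-degeneracy \eqref{nondeg-1} and the six-vector freeness propagate because the corrections are quantitatively small. For the wedge bound \eqref{nondeg}, split
\[ \sum_{n' \geq n} A^{-\eps(n'-n)} W\phi_{n'} = W\phi_n + A^{-\eps} \biggl(\sum_{n' \geq n+1} A^{-\eps(n'-n-1)} W\phi_{n'}\biggr) \]
for $W \in \{X,Y,Z\}$; the orthogonality $B(\phi_n,\psi_n)=0$ makes $W\phi_n$ pointwise orthogonal to the corresponding gradient component of $\psi_n$ when $W\in\{X,Y\}$, so the inductive hypothesis at scale $n+1$ combined with Pythagoras and the Cauchy-Binet identity \eqref{cauchy-binet} upgrades to a lower bound at scale $n$, while the $Z$-component uses the full six-vector freeness. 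The main obstacle throughout is the Nash-Moser convergence in bounded dimension: the symmetrization, mollification, and paraproduct bookkeeping must be synchronized so that no stage of the iteration loses derivatives, for only then can the iteration close within the fixed space $\R^{29}$ and be run uniformly in $\eps$.
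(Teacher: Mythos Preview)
Your proposal is correct and follows essentially the same route as the paper: the easy case $\eps > 1/\log^{1/2} A$ handled by rescaled copies of a fixed model map, the hard case $\eps \leq 1/A$ by a descending (finite-range plus Arzel\`a--Ascoli) induction whose single step combines the Whitney/Veronese seed, a slowly varying isometry $U(p)$ selected via a quantitative Stiefel-lifting lemma, and a Nash--Moser iteration built on the symmetrized identity \eqref{b-ident} with Littlewood--Paley mollification. A couple of numerical details to fix when you write it up: the range of $U(p)$ is the orthogonal complement of the six vectors $W P_{\leq N_0}\psi$, $W\in\{X,Y,Z,XX,YY,XY\}$, hence $23$-dimensional (not ``$\geq 16$''), and the homotopy condition that makes the lifting work is $k \leq D-4$ at the \emph{last} step of extending the $6$-frame to a $26$-frame in $\R^{29}$ (so the operative gap is $29-26=3$, not $29-20$); also, the induction only closes because the explicit powers of $C_0$ in the hypotheses and conclusions of the single-step proposition are tracked so that the output bounds are strictly better than the input ones---this bookkeeping (the paper's Claim~\ref{consequence} and Proposition~\ref{induct}) is where most of the work hides.
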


We will establish Theorem \ref{th3} in later sections.  For now, let us assume it and show how it can be used to construct a function $\Phi_1 \colon H \to \R^D$ obeying the desired properties \eqref{nup}, \eqref{nlp}.

Fix $0 < \eps \leq 1/2$ avoiding the range $[1/A,1/\log^{1/2} A]$.  We construct $\Phi_1$ by the explicit formula
$$ \Phi_1(p) \coloneqq \sum_{n=-\infty}^\infty A^{-\eps n} (\phi_n(p) - \phi_n(0))$$
where $0 = [0,0,0]$ is the origin in $H$. Observe from \eqref{smooth}, \eqref{nabla} that one has the bounds
\begin{equation}\label{flip}
|\phi_n(p) - \phi_n(p')| \lesssim_{C_0} \min( A^n, d(p,p') )
\end{equation}
for any $p,p' \in H$, so the sum here is locally uniformly absolutely convergent.

Now we establish the upper bound \eqref{nup}.  We may assume $A^{n_0-1} \leq d(p,p') \leq A^{n_0}$ for some integer $n_0$.
By applying the rescaling $\delta_{A^{n_0}}$ (replacing each $\phi_n$ with $A^{-n_0} \phi_{n+n_0} \circ \delta_{A^{n_0}}$) we may assume without loss of generality that $n_0=0$; similarly, by translating by $p'$ (and subtracting $\Phi_1(p')$ from $\Phi_1$) we may assume $p'=0$.  Thus
$$ A^{-1} \leq d(p,0) \leq 1$$
and it will suffice to establish the bound
$$ |\Phi_1(p)| \lesssim_{A} \eps^{-1/2}.$$
If we introduce the low frequency component
$$ \Psi(p) := \sum_{n=0}^\infty A^{-\eps n} (\phi_n(p) - \phi_n(0))$$
of $\Phi_1$, then from \eqref{flip} and the triangle inequality we have
\begin{equation}\label{pis}
 \Phi_1(p) = \Psi(p) + O_{C_0}( A^{\eps-1} )
\end{equation}
so it will suffice to show that
\begin{equation}\label{pp}
 |\Psi(p)| \lesssim_{A} \eps^{-1/2}.
\end{equation}
From \eqref{smooth} we have
$$ \nabla^j( \phi_n(p) - \phi_n(0) ) \lesssim_{C_0} A^{-(j-1)n}$$
for $1 \leq j \leq 6$, so from this (and \eqref{flip}) the sum for $\nabla \Psi$ converges in the $C^5$ topology, and from the triangle inequality one has the bounds
\begin{equation}\label{eps-1}
 \| \nabla \Psi \|_{C^0} \lesssim_{C_0} \sum_{n=0}^\infty A^{-\eps n}
\end{equation}
and
\begin{equation}\label{c6}
 \| \nabla^j \Psi \|_{C^0} \lesssim_{C_0} 1
\end{equation}
for $2 \leq j \leq 6$.  Actually, we claim the crucial improvement
\begin{equation}\label{eps-2}
 \| \nabla \Psi \|_{C^0} \lesssim_{A} M
\end{equation}
to \eqref{eps-1}, that is to say that
$$ \left|\sum_{n \geq 0} A^{-\eps n} \nabla \phi_n(p)\right| \lesssim_{C_0} M$$
for any $p$, where $M \sim_A \eps^{-1/2}$ is the quantity defined by
$$ M \coloneqq \left(\sum_{n \geq 0} A^{-2\eps n}\right)^{1/2}.$$
For $\eps > 1/A$, the claim already follows from \eqref{eps-1}, as the right hand side of this estimate is now comparable to $1$.  Thus we may assume $\eps \leq 1/A$, so that \eqref{ortho-1} holds.  From this equation and Pythagoras' theorem one has
$$ \left|\sum_{n \geq m} A^{-\eps n} \nabla \phi_n(p)\right|^2 = \left|\sum_{n \geq m+1} A^{-\eps n} \nabla \phi_n(p)\right|^2 + A^{-2\eps m} \left|\nabla \phi_m(p)\right|^2$$
for any $m$, which telescopes to the Bessel type equality
$$ \left|\sum_{n \geq 0} A^{-\eps n} \nabla \phi_n(p)\right|^2 = \sum_{n \geq 0} A^{-2\eps n} |\nabla \phi_n(p)|^2$$
and the claim \eqref{eps-2} now follows from \eqref{nabla}.  From \eqref{eps-2} and the fundamental theorem of calculus (noting that $\Psi(0)=0$ and $M = O(\eps^{-1/2})$) we obtain \eqref{pp} as required.  For future reference, we observe that this argument, when combined with \eqref{nondeg-1} also gives matching lower bounds in the case $\eps \leq 1/A$, so that
\begin{equation}\label{match}
|X \Psi(p)|, |Y \Psi(p)| \sim_{C_0} M
\end{equation}
for all $p \in H$.

Now we prove \eqref{nlp}.  Let $p,p' \in H$ be such that $A^{n_0-0.1} \leq d(p,p') \leq 2 A^{n_0-0.1}$ for some integer $n_0$.  As before we may normalise $n_0=0$ and $p'=0$, thus
\begin{equation}\label{ao}
 A^{-0.1} \leq d(p,0) \leq 2 A^{-0.1}
\end{equation}
and it will suffice to establish the bound
$$ |\Phi_1(p)| \gtrsim_{C_0} A^{-0.2}.$$
By \eqref{pis} it suffices to obtain the bound
\begin{equation}\label{psil} 
|\Psi(p)| \gtrsim_{C_0} A^{-0.2}.
\end{equation}

We estimate some derivatives of $\Psi$ in preparation for performing a Taylor expansion.  By construction, $\Psi(0)=0$.
From \eqref{c6} and \eqref{heisenberg} one has
\begin{equation}\label{xyz0}
 |Z \Psi(0)|, |W_1 W_2\Psi(0)|, |W_1 W_2 W_3\Psi(p)| \lesssim_{C_0} 1
\end{equation}
for all $p \in H$ and $W_1,W_2,W_3 \in \{X,Y,Z\}$.
Also, from \eqref{nondeg} we have
\begin{equation}\label{xyz-lower}
 |X \Psi(0) \wedge Y \Psi(0) \wedge Z \Psi(0)| \gtrsim_{C_0} M^2.
\end{equation}
By Cauchy-Schwarz, this also implies
\begin{equation}\label{xyz-lower-2}
 |X \Psi(0) \wedge Y \Psi(0) | \gtrsim_{C_0} M^2.
\end{equation}
Write $p = \exp( x X + y Y + z Z)(0)$ for some $x,y,z \in \R$; from \eqref{ao} and \eqref{scaling} we see that $(A^{0.1} x, A^{0.1} y, A^{0.2} z)$ is comparable in magnitude to one.  By Taylor expansion and \eqref{xyz0}, we thus have
$$ \Psi(p) = (x X + y Y + z Z)\Psi(0) + \frac{1}{2} (xX + y Y + zZ)^2 \Psi(0) + O_{C_0}( A^{-0.3} ).$$
If $|x| \geq A^{-0.15}$ or $|y| \geq A^{-0.15}$, we simplify the above expansion to
$$ \Psi(p) =  (x X + y Y)\Psi(0) + O_{C_0}(A^{-0.2})$$
and then from \eqref{match}, \eqref{xyz-lower-2} we will have
\begin{equation}\label{room}
 |\Psi(p)| \gtrsim_{C_0} M A^{-0.15}
\end{equation}
which is acceptable with substantial room to spare (since $M \geq 1$).  Now suppose that $|x|, |y| \leq A^{-0.15}$, which forces $|z| \sim A^{-0.2}$.  Then we simplify the above Taylor expansion to
$$ \Psi(p) = (x X + y Y + z Z)\Psi(0) + O_{C_0}( A^{-0.3} )$$
and hence the orthogonal projection of $\Psi(p)$ to the subspace of $\R^{29}$ orthogonal to $X \Psi(0)$ and $Y \Psi(0)$ has norm $\gtrsim_{C_0} |z| \sim_{C_0} A^{-0.2}$, thanks to \eqref{xyz-lower} and \eqref{match}.  Thus in either case we obtain the desired bound \eqref{psil}.

It remains to prove Theorem \ref{th3}.  This will be the objective of the remaining sections of the paper.

\begin{remark}  The fact that there is room to spare in \eqref{room} indicates that one can make tighter estimates\footnote{We thank Assaf Naor for raising this possibility.}.  Indeed, an inspection of the above argument reveals that whenever $|x| \geq A^{-0.15}/M$ or $|y| \geq A^{-0.15}/M$, one has
$$ |\Psi(p)| \gtrsim_{C_0} M |x| + M |y|.$$
Using this more refined estimate, one can eventually establish the lower bound
$$ |\Phi_1(p) - \Phi_1(q)| \gtrsim_A F_\eps( p q^{-1} ) $$
whenever $A^{n-0.1} \leq d(p,q) \leq 2A^{n-0.1}$ for some integer $n$, 
where $F_\eps: H \to \R$ is the function
$$ F_\eps([x,y,z]) \coloneqq M |x|^{1-\eps} + M |y|^{1-\eps} + |z|^{(1-\eps)/2};$$
similar arguments also give the matching upper bound
$$ |\Phi_1(p) - \Phi_1(q)| \lesssim_A F_\eps( p q^{-1} ) $$
for all $p,q \in H$.  As a consequence, the function $\Phi$ defined by \eqref{Phi-def} in fact enjoys the estimates
\begin{equation}\label{phipq}
 |\Phi(p) - \Phi(q)| \sim_A F_\eps(p q^{-1})
\end{equation}
for all $p,q \in H$.  Note that this is stronger than Theorem \ref{th1} since
$$ d(p,q)^{1-\eps} \lesssim F_\eps(pq^{-1}) \lesssim M d(p,q)^{1-\eps}$$
for all $p,q \in H$.  We leave the detailed verifications of these claims to the interested reader.  An embedding of $H$ into $\ell^2$ that also obeyed the estimate \eqref{phipq} was previously obtained in \cite{lee}.
\end{remark}

\section{Reduction to the iterative step}

Theorem \ref{th3} will be established by iterating the following proposition.
Because we need to use this proposition in an inductive argument, it will be important that we avoid using asymptotic notation such as $\lesssim$ in the \emph{hypotheses} of the proposition, though we will continue to use this notation in its \emph{conclusions}.

\begin{proposition}[Key iterative step]\label{induct}  Let $M$ be a quantity with
\begin{equation}\label{xp-large}
M \geq C_0^{-1}.
\end{equation}
Suppose one has a smooth map $\psi \colon H \to \R^{29}$ obeying the following estimates:
\begin{itemize}
\item[(i)]  (Non-degenerate first derivatives)  For any $p \in H$, one has
\begin{align}
C_0^{-1} M \leq |X \psi(p)|, |Y \psi(p)| &\leq C_0 M. \label{xyp-comp}\\
|X \psi(p) \wedge Y \psi(p)| &\geq C_0^{-6} M^2. \label{xyp-nondeg}
\end{align}
\item[(ii)]  (Locally free embedding)  For any $p \in H$, one has
\begin{equation}\label{free}
\left|\bigwedge_{W = X,Y,Z,XX,YY,XY} W \psi(p)\right| \geq C_0^{-20} A^{-4} M^2.
\end{equation}
\item[(iii)]  (H\"older regularity at scale $A$)  One has
\begin{equation}\label{j-hold}
 \| \nabla^2 \psi \|_{C^{18,\alpha}_A} \leq C_0^2 A^{-1}.
\end{equation}
\end{itemize}
Then one can find a smooth map $\phi \colon  H \to \R^{29}$ obeying the following estimates.
\begin{itemize}
\item[(iv)] (Non-degenerate first derivatives)  For any $p \in H$, one has
\begin{equation}\label{xyp}
 |X \phi(p)|, |Y \phi(p)| \gtrsim 1
\end{equation}
and
\begin{equation}\label{more-ortho}
|X(\psi+\phi)(p) \wedge Y(\psi+\phi)(p)|^2 - |X\psi(p) \wedge Y\psi(p)|^2 \gtrsim C_0^{-4} M^2
\end{equation}
(in particular, the left-hand side of \eqref{more-ortho} is non-negative).
\item[(v)]  (Locally free embedding)  For any $p \in H$, one has
\begin{equation}\label{free-inc}
 \left|\bigwedge_{W = X,Y,Z,XX,YY,XY} W(\psi+\phi)(p)\right| \gtrsim C_0^{-12} M^2.
\end{equation}
\item[(vi)]  (H\"older regularity at scale $1$)  One has
\begin{equation}\label{j-hold-phi}
 \| \phi \|_{C^{20,\alpha}} \lesssim 1.
\end{equation}
\item[(vii)] (Orthogonality)  We have
\begin{equation}\label{x-ortho}
B( \phi, \psi ) = 0.
\end{equation}
\end{itemize}
\end{proposition}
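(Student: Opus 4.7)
The plan is to construct $\phi$ as the limit of a Nash-Moser type iteration along a geometrically increasing sequence of dyadic frequency scales $N_0 < N_1 < N_2 < \dotsb$ (for instance $N_{k+1} = 2N_k$), producing maps $\phi_k \colon H \to \R^{29}$ that exactly satisfy $B(\phi_k, \psi_k) = 0$ for $\psi_k \coloneqq P_{\leq N_k}\psi$. The regularity hypothesis \eqref{j-hold}, combined with the Littlewood-Paley theory of the H\"older spaces, ensures that the increments $\dot\psi_k \coloneqq \psi_{k+1}-\psi_k$ decay rapidly enough in $C^{20,\alpha}$ for the iteration to converge to a limit $\phi \in C^{20,\alpha}$ with $B(\phi,\psi) = 0$.

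For the base case I would construct $\phi_0$ following the outline in the introduction. Start with a smooth $f \colon H \to \R^{5}$ oscillating at unit scale supplied by a quantitative Whitney-type argument, compose with a Veronese-type polynomial embedding $v \colon \R^5 \to \R^{20}$ to obtain genuine local freeness of $v \circ f$, and then apply a slowly varying isometry $U(p) \colon \R^{20} \to \R^{29}$ depending on $\psi_0$ whose image lies in the subspace orthogonal to $X\psi_0(p), Y\psi_0(p), XX\psi_0(p), YY\psi_0(p)$. The existence of such a $U$ depending smoothly on $\psi_0$ is a quantitative topological lemma relying on the vanishing of the first few homotopy groups of high-dimensional spheres, which holds with room to spare since the available codimension is $29 - 20 = 9$ while the subspace to avoid is only $4$-dimensional. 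Setting $\phi_0 \coloneqq U \cdot (v \circ f)$ so that $\phi_0 \cdot W\psi_0 \equiv 0$ for $W \in \{X,Y,XX,YY\}$, an application of \eqref{b-expand} then yields $B(\phi_0, \psi_0) = 0$ exactly.

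For the inductive step I would write $\phi_{k+1} = \phi_k + \dot\phi_k$ and use the symmetrised identity \eqref{b-ident} to reduce the linearised equation $B(\dot\phi_k, \psi_k) + B(\phi_k, \dot\psi_k) = 0$ to the zeroth-order pointwise linear system
\begin{equation*}
 \dot\phi_k(p) \cdot W \psi_k(p) = -\dot\psi_k(p) \cdot W\bigl(P_{\leq N_k}\phi_k\bigr)(p) \qquad \bigl(W \in \{X, Y, XX, YY\}\bigr)
\end{equation*}
modulo a high--high paraproduct remainder. The crucial Nash-Moser device is the replacement of $\phi_k$ on the right by its Littlewood-Paley mollification $P_{\leq N_k}\phi_k$; this prevents the differential operators $W$ from costing derivatives on $\dot\phi_k$, at the price of an error that is doubly frequency-localised and hence strictly more regular than either factor. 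The freeness bound \eqref{free} on $\psi$ passes to $\psi_k$ with negligible loss (since $N_0 A \gg 1$), so the $4 \times 29$ coefficient matrix $(W\psi_k)_{W}$ admits a uniformly bounded pseudoinverse in $\R^{29}$, yielding H\"older estimates $\|\dot\phi_k\|_{C^{20,\alpha}} \lesssim \|\dot\psi_k\|_{C^{20,\alpha}}$ that sum geometrically.

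The main obstacle, and the whole reason the Nash-Moser scheme is needed, is propagating these H\"older bounds through an unbounded number of iterations while simultaneously maintaining conclusions (iv)--(vii) with the stated quantitative constants. Conclusions \eqref{j-hold-phi} and \eqref{x-ortho} fall out of the summable estimates and passing to the limit in the orthogonality equations. The lower bound \eqref{xyp} comes from $\phi_0$ being quantitatively immersive at unit scale (via the Whitney--Veronese data) while $\phi - \phi_0$ is small in $C^1$; the freeness \eqref{free-inc} propagates similarly by stability of free embeddings under small $C^{20,\alpha}$ perturbations. The wedge-product improvement \eqref{more-ortho} is the most delicate conclusion: expanding $|X(\psi+\phi) \wedge Y(\psi+\phi)|^2 - |X\psi \wedge Y\psi|^2$ via the Lagrange identity and using $X\phi \cdot X\psi = Y\phi \cdot Y\psi = 0$ produces a positive contribution $|X\psi|^2|Y\phi|^2 + |X\phi|^2|Y\psi|^2 \gtrsim C_0^{-4} M^2$ together with signed cross-terms involving $X\psi \cdot Y\phi$ and $X\phi \cdot Y\psi$ that are \emph{not} killed by our orthogonality (as noted in the footnote to the introduction). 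The required gain of $C_0^{-4} M^2$ must therefore be extracted by an explicit computation of the pseudoinverse components defining $\dot\phi_k$ in the basis $\{X\psi, Y\psi, XX\psi, YY\psi\}$, showing that the positive main term quantitatively dominates these signed cross-terms.
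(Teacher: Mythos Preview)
Your overall plan matches the paper's approach: a Whitney--Veronese base composed with a slowly varying isometry $U(p)$ whose range is orthogonal to low-frequency derivatives of $\psi$, followed by a Nash--Moser iteration via the symmetrised identity \eqref{b-ident} with mollification. The paper packages the iteration as a separate Perturbation Theorem (Proposition~\ref{perturb}) applied with $F=0$, and takes the range of $U$ orthogonal to all six of $W P_{\leq N_0}\psi$, $W\in\{X,Y,Z,XX,YY,XY\}$, rather than your four, but these are minor organisational differences.

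Two of your concluding verifications underestimate the work, however. For \eqref{free-inc}, your appeal to ``stability of free embeddings under small perturbation'' fails: the conclusion \eqref{free-inc} is \emph{stronger} than the hypothesis \eqref{free} by a factor of roughly $A^4$, because the second-order pieces $W\psi$ for $W\in\{Z,XX,YY,XY\}$ are only $O(A^{-1})$ by \eqref{j-hold}, whereas $W\phi \approx U(W\phi^0)$ has size $\sim 1$. So $\phi$ is not a small perturbation in these four directions; it dominates. The paper instead uses that the range of $U$ is orthogonal to $X P_{\leq N_0}\psi$ and $Y P_{\leq N_0}\psi$ to factor the six-fold wedge via the depolarised Cauchy--Binet formula \eqref{depolar}, separating the $X\psi,Y\psi$ contribution from $\omega \coloneqq \bigwedge_{W=Z,XX,YY,XY} U(W\phi^0)$. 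For \eqref{more-ortho}, the Lagrange expansion also contains the term $-2(X\psi\cdot Y\psi)(X\phi\cdot Y\psi + X\psi\cdot Y\phi + X\phi\cdot Y\phi)$; since $|X\psi\cdot Y\psi|$ can be as large as $C_0^2 M^2$, a naive Cauchy--Schwarz bound $|X\phi\cdot Y\psi| \lesssim M$ gives a negative contribution of order $M^3$, which swamps your positive $\sim M^2$ term when $M$ is large (and the hypotheses place no upper bound on $M$). One must show $X\phi\cdot Y\psi, X\psi\cdot Y\phi = O_{C_0}(A^{-1})$ with \emph{no} factor of $M$. The paper extracts this via a dedicated estimate \eqref{ax-2}, proved by tracking that in the pseudoinverse construction the $Y\psi$-component of every piece of $\phi$ (the base $\phi_{\leq N_0}$ as well as each increment $\phi_N$) is set equal to something not involving $Y\psi$ directly; your reference to the pseudoinverse components points in this direction, but the $M$-independence is the crux and needs to be made explicit.
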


The hypotheses and conclusions here are technical, chosen so that one can close a certain induction argument.  In particular it will be crucial that the function $\phi$ has essentially the same sort of regularity control (in this case, $C^{20,\alpha}$ type control) that the original function $\psi$ has; one cannot afford to ``lose derivatives'' in this regard.  It is because of this that we will be forced to use a version of the Nash-Moser iteration scheme to construct $\phi$.   On the other hand, the condition \eqref{j-hold} ensures that the higher derivatives of the given function $\psi$ are quite small, gaining one or more powers of the large quantity $A$, and these factors will be essential in allowing one to keep the constants in the conclusions of Proposition \ref{induct} at a manageable level, and in particular to be able to close the induction.  The freeness property in \eqref{free-inc} is stronger than what is needed to establish the immersion property \eqref{nondeg}, but will be important for inductive purposes, as it is needed for the Nash-Moser style argument to work.  The powers of $C_0$ in the conclusions of Proposition \ref{induct} are superior to those in the hypotheses, which is needed to close the induction; we will be able to obtain these gains due to the very slowly varying nature of $\psi$, as represented by the appearance of the large parameter $A$ in the hypotheses.

We establish Proposition \ref{induct} in later sections.  In this section, we show how Proposition \ref{induct} can be iterated to establish Theorem \ref{th3}.

We first construct an auxiliary function $\phi^0 \colon H \to \R^{20}$ in a slightly lower dimensional Euclidean space $\R^{20}$ than $\R^{29}$, which essentially allows one to verify Theorem \ref{th3} for a single scale $n$, and will also be useful in later sections for inductively increasing the range of $n$ for which Theorem \ref{th3} can be verified. 

\begin{proposition}[A single oscillating function]\label{prop}  There exists a smooth map $\phi^0 \colon H \to \R^{20}$ obeying the following estimates:
\begin{itemize}
\item (Smoothness)  For any non-negative integer $j$, we have
\begin{equation}\label{phi0-smooth}
\| \phi^0 \|_{C^j} \lesssim_j 1.
\end{equation}
\item (Locally free embedding)  For any $p \in H$, we have
\begin{equation}\label{free-embed}
 \left|\bigwedge_{W = X,Y,Z,XX,YY,XY} W \phi^0(p)\right| \gtrsim 1.
\end{equation}
\end{itemize}
In particular, from Cauchy-Schwarz we also derive the estimates
$$ |W_1 \phi^0(p) \wedge \dots \wedge W_k \phi^0(p)| \gtrsim 1$$
whenever $p \in H$ and $W_1,\dots,W_k$ are distinct differential operators in $\{ X, Y, Z, XX, YY, XY \}$.  (From \eqref{heisenberg} one can also replace $XY$ by $YX$ in this latter claim.)
\end{proposition}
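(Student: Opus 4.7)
The plan is to construct $\phi^0$ as the composition of a smooth immersion $f : H/\Gamma \to \R^5$ (obtained from the strong Whitney immersion theorem, which gives an immersion of any smooth $n$-manifold into $\R^{2n-1}$) with a degree-two Veronese-type map $V : \R^5 \to \R^{20}$. Specifically, lift $f$ to a $\Gamma$-periodic smooth map $f : H \to \R^5$ (still an immersion at every point, so that $Xf(p), Yf(p), Zf(p)$ are linearly independent in $\R^5$ for all $p$, since $X,Y,Z$ form a frame of the tangent bundle of $H$), and define $V(y) \coloneqq (y, (y_i y_j)_{1 \leq i \leq j \leq 5})$, using $5 + \binom{5+1}{2} = 20$. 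Set $\phi^0 \coloneqq V \circ f$. The smoothness bounds \eqref{phi0-smooth} are then immediate: $f$ and all its derivatives are bounded on $H$ by $\Gamma$-periodicity and compactness of $H/\Gamma$, and $V$ is a polynomial, so the product rule yields uniform bounds on $\nabla^j \phi^0$ of any order $j$.

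For the freeness property \eqref{free-embed}, I would argue at a fixed point $p \in H$ that the six vectors $\{W\phi^0(p) : W \in \mathcal{W}\}$, with $\mathcal{W} = \{X,Y,Z,XX,YY,XY\}$, are linearly independent in $\R^{20}$. Suppose $\sum_W c_W W\phi^0(p) = 0$. Project onto the linear component $\R^5$ to obtain
$$ \sum_{W \in \mathcal{W}} c_W Wf(p) = 0. \qquad (\ast) $$
For the quadratic component in $\mathrm{Sym}^2(\R^5) \cong \R^{15}$, the product rule gives $W(f_if_j) = (Wf_i)f_j + f_i(Wf_j)$ for first-order $W$, and for a second-order operator $W = AB$,
$$ W(f_i f_j) = (Wf_i)f_j + (Af_i)(Bf_j) + (Bf_i)(Af_j) + f_i(Wf_j). $$
Summing with coefficients $c_W$, every term of the form $\mathrm{Sym}(Wf \otimes f)$ assembles into $2\,\mathrm{Sym}\bigl((\sum_W c_W Wf) \otimes f\bigr)$, which vanishes by $(\ast)$. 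What remains from the second-order operators is
$$ c_{XX}\, Xf(p) \otimes Xf(p) + c_{YY}\, Yf(p) \otimes Yf(p) + c_{XY}\, \mathrm{Sym}\bigl(Xf(p) \otimes Yf(p)\bigr) = 0. $$
Since $Xf(p), Yf(p)$ are linearly independent in $\R^5$ (being part of the immersion frame), the three symmetric two-tensors above are linearly independent in $\mathrm{Sym}^2(\R^5)$, forcing $c_{XX}=c_{YY}=c_{XY}=0$. Relation $(\ast)$ then collapses to $c_X Xf(p) + c_Y Yf(p) + c_Z Zf(p) = 0$, and the immersion property kills the remaining coefficients.

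This establishes pointwise linear independence; the quantitative lower bound then follows because the continuous function $p \mapsto |\bigwedge_{W \in \mathcal{W}} W\phi^0(p)|$ is $\Gamma$-periodic and strictly positive on the compact nilmanifold $H/\Gamma$, hence bounded below by an absolute positive constant on all of $H$. The auxiliary statement for subfamilies $W_1,\dots,W_k$ of distinct operators from $\mathcal{W}$ follows from the full wedge by the Cauchy–Binet identity \eqref{cauchy-binet} combined with the upper bounds \eqref{phi0-smooth} on the individual $W\phi^0(p)$, as indicated in the statement.

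I do not anticipate a genuine obstacle: the only mildly delicate point is the algebraic reduction in the quadratic component, where one must carefully use $(\ast)$ to cancel the ``mixed'' terms $\mathrm{Sym}(Wf \otimes f)$ so that the second-order coefficients are isolated as a combination of pure tensor products of $Xf$ and $Yf$. This is precisely the classical Veronese mechanism (analogous to how the Gauss map of the Veronese embedding turns an arbitrary immersion into a free map in the isometric embedding setting), and it is the reason the dimension count is exactly $5 + 15 = 20$.
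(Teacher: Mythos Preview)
Your proposal is correct and follows essentially the same route as the paper: lift a Whitney immersion $f \colon H/\Gamma \to \R^5$ to a $\Gamma$-periodic map on $H$, compose with the Veronese-type map $V(v) = (v, v \otimes v)$ into $\R^{20}$, and use the ``carr\'e du champ'' cancellation (your relation $(\ast)$) to isolate the second-order coefficients as a combination of $Xf \otimes Xf$, $Yf \otimes Yf$, $\mathrm{Sym}(Xf \otimes Yf)$, then conclude by immersion and compactness of $H/\Gamma$. The argument, including the final passage from pointwise nonvanishing to a uniform lower bound via $\Gamma$-periodicity, matches the paper's proof essentially line for line.
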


The dimension $20$ in this proposition can almost certainly be lowered, but we have not attempted to optimise it here.

\begin{proof} 
As mentioned in the introduction the nilmanifold $H/\Gamma$ is smooth compact three-dimensional manifold.  By the strong Whitney immersion\footnote{We thank the referee for pointing out that this theorem improves the numerical dimensions from the previous argument of the author which relied instead on the Whitney embedding theorem.} theorem \cite{whitney}, there is a smooth immersion of $H/\Gamma$ into $\R^{2 \times 3 - 1} = \R^5$, which lifts to a smooth map $f \colon H \to \R^5$ which is $\Gamma$-automorphic in the sense that $f(p\gamma) = f(p)$ for all $p \in H$ and $\gamma \in \Gamma$.  We fix this map $f$ (in particular we may allow implied constants to depend on $f$).  By compactness of $H/\Gamma$, we have
$$ \| f \|_{C^j} \lesssim_j 1$$
for every $j$.  The vector fields $X,Y,Z$ push forward to pointwise linearly independent vector fields on the compact manifold $H/\Gamma$, and hence we have
$$ |X f(p) \wedge Y f(p) \wedge Z f(p)| \gtrsim 1$$
for all $p \in H$.

This does not quite recover the full strength of \eqref{free-embed}. To do this, we perform the trick (standard in the Nash embedding theorem literature) of composing $f$ with the Veronese-type embedding $V \colon \R^5 \to \R^5 \times \mathrm{Sym}^2(\R^5) \equiv \R^{5 + \frac{5(5+1)}{2}} = \R^{20}$ defined by
$$ V(v) \coloneqq (v, v \otimes v)$$
where $\otimes \colon \R^5 \times \R^5 \to \R^{5} \otimes \R^5$ is the tensor product, and $\mathrm{Sym}^2(\R^5) \subset \R^5 \otimes \R^5$ is the subspace of symmetric rank $2$ tensors.  Let $\phi^0 \colon H \to \R^{20}$ be the map $\phi^0 \coloneqq V \circ f$, thus
$$ \phi^0(p) = ( f(p), f(p) \otimes f(p)).  $$
From the chain rule (or product rule) we certainly have \eqref{phi0-smooth}.  Now suppose that there is a point $p \in H$ for which the quantity
\begin{equation}\label{xyzphi}
 \bigwedge_{W = X,Y,Z,XX,YY,XY} W \phi^0(p)
\end{equation}
vanishes, thus we have a non-trivial linear dependence
$$ \sum_{W = X,Y,Z,XX,YY,XY} a_W W \phi^0(p) = 0$$
for some real numbers $a_W$ for $W = X,Y,Z,XX,YY,XY$, not all zero.  For brevity we omit dependence on $p$.  In components, this means that
\begin{equation}\label{linear}
 \sum_{W = X,Y,Z,XX,YY,XY} a_W W f= 0
\end{equation}
and
\begin{equation}\label{quadratic}
 \sum_{W = X,Y,Z,XX,YY,XY} a_W W (f \otimes f) = 0.
\end{equation}
Taking the tensor product of \eqref{linear} with $f$ on the left and the right and subtracting from \eqref{quadratic} using the product rule, we conclude the ``carr\'e du champ'' identity
$$ 2a_{XX} X f \otimes X f + 2a_{YY} Y f \otimes Y f + a_{XY} X f \otimes Y f + a_{XY} Y f \otimes X f = 0.$$
Since $X f$, $Y f$ are linearly independent, this implies that $a_{XX}, a_{YY}, a_{XY}$ vanish, which from \eqref{linear} implies that $X f, Y f, Z f$ are linearly dependent, which is absurd.  Thus the expression \eqref{xyzphi} is nowhere vanishing; as it descends to a continuous function on the compact space $H/\Gamma$, the claim \eqref{free-embed} follows.
\end{proof}

Using this proposition we can now dispose of the easy case when $\eps > 1/\log^{1/2} A$ of Theorem \ref{th3} (so that the orthogonality condition \eqref{ortho-1} does not need to be verified).  In this case we can set
$$ \phi_n(p) \coloneqq \iota( A^n \phi^0( \delta_{A^{-n}}(p) ) )$$
for all $n \in \Z$ and $p \in H$, where $\iota \colon \R^{20} \to \R^{29}$ is the standard embedding.  It is then a routine matter to use Proposition \ref{prop} and \eqref{chain} to verify all the conclusions of Theorem \ref{th3} (except for \eqref{ortho-1}, which does not need to be verified); note that the hypothesis $\eps > 1/\log^{1/2} A$ makes the contributions of the $n'>n$ terms in \eqref{nondeg} negligible.
Since $\eps$ avoids the interval $[1/A,1/\log^{1/2} A]$, we may thus assume henceforth that $\eps \leq 1/A$.

\begin{remark}  If one were to replace $\R^{29}$ in Theorem \ref{th3} by $\ell^2$, one could also easily conclude this variant of the theorem by setting
$$ \phi_n(p) \coloneqq \iota_n( A^n \phi^0( \delta_{A^{-n}}(p) ) )$$
for all $n \in \Z$ and $p \in H$, where $\iota_n \colon\R^{20} \to \ell^2$ are linear isometric embeddings of $\R^{20}$ into $\ell^2$ with pairwise orthogonal ranges.  This already recovers the Assouad embedding \cite{assouad} of $(H,d^{1-\eps})$ into $\ell^2$ with distortion $O( \eps^{-1/2})$.  We leave the details to the interested reader.
\end{remark}

We will shortly use Proposition \ref{prop} and Proposition \ref{induct} in an induction argument to establish the following technical claim.

\begin{claim}[Iteration]\label{consequence} Let $0 < \eps \leq 1/A$, and let $N_1 \leq N_2$ be integers.  Then one can find smooth functions $\phi_n \colon H \to \R^{29}$ for $N_1 \leq n \leq N_2$ obeying the following bounds, with $\phi_{\geq n} \colon H \to \R^{29}$ the function defined by the formula
$$ \phi_{\geq n} \coloneqq \sum_{n \leq n' \leq N_2} A^{-\eps(n'-n)} \phi_{n'}:$$
\begin{itemize}
\item (Smoothness at scale $A^n$) For all $N_1 \leq n \leq N_2$ and $p \in H$, one has
\begin{equation}\label{smooth-1}
\| \phi_n \|_{C^{20}_{A^n}} \leq C_0 A^n
\end{equation}
and
\begin{equation}\label{smooth-2}
\| \nabla^2 \phi_{\geq N_1} \|_{C^{18,\alpha}_{A^{N_1}}} \leq C_0 A^{-N_1}.
\end{equation}
\item (Orthogonality)  One has \eqref{ortho-1} for all $N_1 \leq n \leq N_2$.
\item (Non-degeneracy)  
For any $p \in H$ and $N_1 \leq n \leq N_2$, one has the estimates
\begin{align}
|X \phi_n(p)|, |Y \phi_n(p)| &\geq C_0^{-1} \label{iter-1} \\
|X \phi_{\geq N_1}(p) \wedge Y \phi_{\geq N_1}(p)| &\geq C_0^{-4} |X \phi_{\geq N_1}(p)| |Y \phi_{\geq N_1}(p)| \label{iter-2} \\
\left|\bigwedge_{W = X,Y,Z,XX,YY,XY} W \phi_{\geq n}(p)\right| &\geq 
C_0^{-17} A^{-4n} |X \phi_{\geq n}(p)| |Y \phi_{\geq n}(p)|.\label{iter-3}
\end{align}
\end{itemize}
\end{claim}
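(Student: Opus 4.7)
The plan is to proceed by downward induction on $N_1$, holding $N_2$ fixed. For the base case $N_1 = N_2$, the orthogonality sum \eqref{ortho-1} is empty, and I would set
$$\phi_{N_2}(p) := A^{N_2}\, \iota\bigl(\phi^0(\delta_{A^{-N_2}}(p))\bigr),$$
where $\phi^0$ is the map from Proposition \ref{prop} and $\iota \colon \R^{20} \to \R^{29}$ is the standard inclusion. The scaling laws \eqref{chain-iter}, \eqref{scaling-2} combined with \eqref{phi0-smooth} give \eqref{smooth-1} and \eqref{smooth-2}; the freeness \eqref{free-embed} (plus Cauchy-Schwarz) gives \eqref{iter-1}, \eqref{iter-2}, and \eqref{iter-3}, with the exponent $A^{-4N_2}$ in the last bound arising because each of the four second-order operators $Z, XX, YY, XY$ contributes a factor of $A^{-N_2}$ under the chain rule.

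For the inductive step, introduce $M_n := \bigl(\sum_{n \leq n' \leq N_2} A^{-2\eps(n'-n)}\bigr)^{1/2}$ and, given $\phi_{N_1+1}, \dots, \phi_{N_2}$ from the inductive hypothesis, I would apply Proposition \ref{induct} with
$$\tilde\psi(p) := A^{-\eps-N_1}\, \phi_{\geq N_1+1}(\delta_{A^{N_1}}(p)), \qquad M := A^{-\eps} M_{N_1+1},$$
and then set $\phi_{N_1}(p) := A^{N_1}\tilde\phi(\delta_{A^{-N_1}}(p))$, where $\tilde\phi$ is the function produced by the proposition. This is arranged so that $\tilde\psi + \tilde\phi$ corresponds to $\phi_{\geq N_1}$ under the rescaling by $\delta_{A^{N_1}}$. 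The hypotheses of Proposition \ref{induct} would be verified as follows: the inductive orthogonality \eqref{ortho-1} at level $N_1+1$, together with Pythagoras, gives $|X\phi_{\geq N_1+1}|^2 = \sum_{n' \geq N_1+1} A^{-2\eps(n'-N_1-1)}|X\phi_{n'}|^2 \in [C_0^{-2}M_{N_1+1}^2, C_0^2 M_{N_1+1}^2]$ (and similarly for $Y$), yielding \eqref{xp-large} and \eqref{xyp-comp} upon rescaling; \eqref{xyp-nondeg} and \eqref{free} follow from the inductive versions of \eqref{iter-2} and \eqref{iter-3}; and \eqref{j-hold} follows from the inductive \eqref{smooth-2} via \eqref{scaling-2}, with a factor of $A^{-\eps}$ to spare.

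Given $\tilde\phi$, the orthogonality \eqref{ortho-1} at level $N_1$ reduces by bilinearity and the chain rule to $B(\tilde\phi,\tilde\psi) = 0$, which is \eqref{x-ortho}; the orthogonality identities at levels $n > N_1$ are inherited from the inductive hypothesis since those functions $\phi_{n'}$ are unchanged. The smoothness bounds \eqref{smooth-1}, \eqref{smooth-2} at level $N_1$ combine \eqref{j-hold-phi} rescaled with the inductive \eqref{smooth-2} at level $N_1+1$, using monotonicity of $\|\cdot\|_{C^{k,\alpha}_R}$ in $R$. The nondegeneracy \eqref{iter-1} for $\phi_{N_1}$ is \eqref{xyp} directly; \eqref{iter-2} at level $N_1$ follows by combining \eqref{more-ortho} with the pointwise Pythagorean identity $|X(\tilde\psi+\tilde\phi)|^2 = |X\tilde\psi|^2 + |X\tilde\phi|^2$ (valid by \eqref{x-ortho}) and the inductive \eqref{iter-2}; and \eqref{iter-3} at level $N_1$ follows from \eqref{free-inc} together with the bound $|X\tilde\phi|, |Y\tilde\phi| \lesssim 1$ from \eqref{j-hold-phi}.

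The main obstacle is bookkeeping of the $C_0$-exponents. Proposition \ref{induct} requires hypotheses with exponents $C_0^{-6}$ in \eqref{xyp-nondeg} and $C_0^{-20}$ in \eqref{free}, while producing conclusions with sharper exponents $C_0^{-4}$ in \eqref{more-ortho} and $C_0^{-12}$ in \eqref{free-inc}. This gain of a few powers of $C_0$, together with the slack provided by the large parameter $A$ in \eqref{j-hold} and \eqref{free}, should exactly suffice to restore the inductive hypotheses (with exponents $C_0^{-4}$ and $C_0^{-17}$) at level $N_1$. The hypothesis $\eps \leq 1/A$ enters at each step where factors $A^{\pm c\eps}$ must be kept close to $1$ for the iteration to close.
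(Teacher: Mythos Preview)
Your proposal is correct and follows essentially the same approach as the paper: both argue by induction on the number of scales, anchor the base case with $\iota\circ\phi^0$ (suitably rescaled), and in the inductive step rescale $\phi_{\geq N_1+1}$ to play the role of $\psi$ in Proposition~\ref{induct}, with the Pythagorean identity from \eqref{ortho-1} pinning down $|X\psi|,|Y\psi|\sim M$. The only cosmetic difference is that the paper normalises $N_1=0$ at the outset of the inductive step (so that $\psi=A^{-\eps}\phi_{\geq 1}$ directly), whereas you carry the explicit rescaling $\delta_{A^{N_1}}$ throughout; the verification of the $C_0$-exponent bookkeeping and of the conclusions \eqref{smooth-1}--\eqref{iter-3} is otherwise identical.
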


Suppose for the moment that we have Claim \ref{consequence}.  We now use this to show Theorem \ref{th3}.  We first observe that it suffices to construct, for each natural number $N$, a finite family $\phi_n = \phi_n^{(N)}$ for $-N \leq n \leq N$ of smooth maps from $H$ to $\R^{29}$ obeying the conclusions of Theorem \ref{th3} (with bounds independent of $N$) with the indices $n,n'$ restricted to $[-N,N]$ and with the $C^6_A$ norm replaced by $C^7_A$, since one can then apply the Arzel\'a-Ascoli theorem\footnote{Alternatively, one can take a limit as $N \to \infty$ along an ultrafilter.} and pass to a subsequence of $N$ for which the $\phi_n^{(N)}$ converge locally in the $C^6$ topology as $N \to \infty$ to limiting functions $\phi_n, n \in \Z$ that obey all the conclusions of Theorem \ref{th3} without any restriction on the parameter $n \in \Z$.  Next, for any given $N \geq 1$, we apply Claim \ref{consequence} with $N_1 = -N$ and $N_2 = N$ to obtain functions $\phi_n$, $-N \leq n \leq N$ obeying the properties \eqref{smooth-1}-\eqref{iter-3}.  The property \eqref{smooth-1} implies \eqref{smooth} for all $-N \leq n \leq N$; similarly, \eqref{iter-1} gives \eqref{nondeg-1}.  The property \eqref{ortho-1} for $-N \leq n \leq N$ is also true by construction.
The only estimate that requires some computation is \eqref{nondeg}.  But from \eqref{ortho-1}, the Pythagorean theorem, and induction we have
$$ |X \phi_{\geq n}(p)|^2 = \sum_{n \leq n' \leq N} A^{-2\eps(n'-n)} |X \phi_{n'}(p)|^2$$
and hence by \eqref{iter-1}, \eqref{smooth-1}
$$ C_0^{-2} M_{n,N}^2 \leq |X \phi_{\geq n}(p)|^2  \leq C_0^2 M_{n,N}^2$$
where 
\begin{equation}\label{mn-def}
M_{n,N}^2 \coloneqq \sum_{n \leq n' \leq N} A^{-2\eps(n'-n)} \sim \min(N-n, 1/\eps),
\end{equation}
and similarly for $Y \phi_{\geq n}(p)$.  Combining these bounds with \eqref{iter-3}, \eqref{smooth-1}, and Cauchy-Schwarz, we conclude that
$$ |X \phi_{\geq n}(p) \wedge Y \phi_{\geq n}(p) \wedge Z \phi_{\geq n}(p)| \gtrsim_{C_0} A^{-n} M_{n,N}^2,$$
which gives \eqref{nondeg} (with $n'$ restricted to $[-N,N]$) as required.  Thus Claim \ref{consequence} implies Theorem \ref{th3} and hence also Theorem \ref{th1}.

Now we derive Claim \ref{consequence} from Proposition \ref{induct}.  We do this by induction on the quantity $N_2-N_1$.   We first establish the base case when $N_2-N_1 = 0$.  By rescaling we may normalise $N_1=N_2=0$.  Let $\phi^0 \colon H \to \R^{20}$ be the map from Proposition \ref{prop}, then we simply set
$$ \phi_0 \coloneqq \iota \circ \phi^0$$
where $\iota \colon \R^{20} \to \R^{29}$ is the usual inclusion map.  All the properties of Claim \ref{consequence} are then immediate from Proposition \ref{prop} (for instance, the orthogonality \eqref{ortho-1} is trivial).

Now suppose that $N_2-N_1 > 0$, and the claim has already been proven for smaller values of $N_2-N_1$.  By rescaling we may assume $N_1=0 < N_2$.  Applying the inductive hypothesis with $N_1$ replaced by $1$, we can construct functions $\phi_n$ for all $1 \leq n \leq N_2$ obeying the conclusions of Claim \ref{consequence}.  In particular, if we write
$$ \psi \coloneqq A^{-\eps} \phi_{\geq 1} = \sum_{1 \leq n \leq N_2} A^{-\eps n} \phi_n$$
then (since $A^{-\eps} \sim 1$ when $\eps \leq 1/A$) we have the bounds
\begin{align}
\| \nabla^2 \psi \|_{C^{18,\alpha}_A} &\lesssim C_0 A^{-1} \label{smooth-2b} \\ 
|X \psi(p) \wedge Y \psi(p)| &\geq C_0^{-4} |X\psi(p)| |Y \psi(p)| \label{iter-2b} \\
\left|\bigwedge_{W = X,Y,Z,XX,YY,XY} W \psi(p)\right| &\gtrsim 
C_0^{-17} A^{-4} |X \psi(p)| |Y \psi(p)|\label{iter-3b}
\end{align}
for all $p \in H$, and our task is then to construct an additional function $\phi = \phi_0$ so that the bounds
\begin{align}
\| \phi \|_{C^{20}} &\leq C_0 \label{smooth-1a} \\ 
\| \nabla^2 (\phi + \psi) \|_{C^{18,\alpha}} &\leq C_0 \label{smooth-2a} \\ 
B( \phi, \psi) &= 0 \label{ortho-1a} \\
|X \phi(p)|, |Y \phi(p)| &\geq C_0^{-1} \label{iter-1a} \\
|X (\phi+\psi)(p) \wedge Y (\phi+\psi)(p)| &\geq C_0^{-4} |X(\phi+\psi)(p)| |Y (\phi+\psi)(p)| \label{iter-2a} \\
\left|\bigwedge_{W = X,Y,Z,XX,YY,XY} W (\phi+\psi)(p)\right| &\geq 
C_0^{-17} |X (\phi+\psi)(p)| |Y (\phi+\psi)(p)|\label{iter-3a}
\end{align}
hold for all $p \in H$.

From \eqref{ortho-1}, induction, and Pythagoras' theorem, we have for any $p \in H$ that
$$ |X\psi(p)|^2 = \sum_{1 \leq n \leq N_2} A^{-2\eps n} |X \phi_n(p)|^2$$
and hence by \eqref{iter-1}, \eqref{smooth-1} for $n_0=1$ we have
$$
 C_0^{-1} M \leq |X \psi(p)| \leq C_0 M
$$
where $M$ is the quantity
\begin{equation}\label{m-def}
 M \coloneqq \left(\sum_{1 \leq n \leq N_2} A^{-2\eps n}\right)^{1/2}.
\end{equation}
Similarly for $Y \psi(p)$, thus
\begin{equation}\label{xao}
 C_0^{-1} M \leq |X \psi(p)|, |Y \psi(p)| \leq C_0 M
\end{equation}  

We wish to invoke Proposition \ref{induct} for the indicated choices of $M, C_0$ to construct $\phi$.  To do this, we must first verify the hypotheses \eqref{xp-large}-\eqref{j-hold}  of that proposition.  The hypothesis \eqref{xp-large} is clear from \eqref{m-def} since $\eps \leq 1/A$, and the hypothesis \eqref{xyp-comp} follows from \eqref{xao}.  The hypothesis \eqref{xyp-nondeg} follows from \eqref{iter-2b}, and the hypothesis \eqref{free} similarly follows from \eqref{iter-3b}, \eqref{xao}.  Finally, \eqref{j-hold} follows from \eqref{smooth-2b}.  Thus we may apply Proposition \ref{induct} to locate a smooth map $\phi \colon  H \to \R^{29}$ with the stated properties \eqref{xyp}-\eqref{x-ortho}.

It remains to establish the required estimates \eqref{smooth-1a}-\eqref{iter-3a}.  The claim \eqref{smooth-1a} is immediate from \eqref{j-hold-phi}.  The latter estimate also gives
$$ \| \nabla^2 \phi \|_{C^{18,\alpha}} \lesssim 1$$
which when combined with \eqref{smooth-2b} gives \eqref{smooth-2a} (note that the factors of $A$ more than compensate for the additional factor of $C_0$).

The orthogonality property \eqref{ortho-1a} follows from \eqref{x-ortho}, and \eqref{iter-1a} follows from \eqref{xyp}, so we turn to \eqref{iter-2a}.  For brevity we omit dependence on $p$. Squaring both sides and using \eqref{x-ortho}, this claim is equivalent to
$$|X (\phi+\psi) \wedge Y (\phi+\psi)|^2 \geq C_0^{-8} (|X \phi|^2 + |X \psi|^2) (|Y \phi|^2 + |Y \psi|^2).$$
Comparing this with (the square of) \eqref{iter-2b}, we see that it suffices to show that
$$|X (\phi+\psi) \wedge Y (\phi+\psi)|^2 - |X \psi  \wedge Y \psi|^2 \geq C_0^{-8} (|X \phi|^2 |Y \psi|^2 + |X \psi|^2 |Y \phi|^2 + |X \phi|^2 |Y \phi|^2).$$
By \eqref{more-ortho}, the left-hand side is
$$ \gtrsim \frac{1}{C_0^{4}} M^2$$
while from \eqref{j-hold-phi} (and \eqref{xao}) the right-hand side is
$$ \lesssim C_0^{-8} C_0^2 M^2,$$
and \eqref{iter-2a} follows.  Finally, \eqref{iter-3a} follows from \eqref{free-inc}, \eqref{xao}, \eqref{j-hold-phi}.

To complete the proof of Theorem \ref{th1}, it thus remains to prove Proposition \ref{induct}.  This will be done in Section \ref{conclude}, after establishing a key perturbation theorem in Section \ref{perturb-sec} (which in turn relies on Littlewood-Paley theory for the Heisenberg group, which we review in Section \ref{lp-sec}), and some quantitative topological lemmas in Section \ref{top-sec}.

\section{Littlewood-Paley theory on the Heisenberg group}\label{lp-sec}

In order to construct a usable perturbation theory for the bilinear form $B$, we will need to introduce some basic Littlewood-Paley theory on the Heisenberg group.  This theory is developed in detail in \cite{bg1}, \cite{bg2} (see also \cite{thang}), but we will only need a more basic component of this theory from \cite{hulanicki}.  (See also the more general Littlewood-Paley theory on arbitrary manifolds developed in \cite{rodnianski}.)

Let $L$ denote the \emph{Laplacian-Kohn operator} (or \emph{sublaplacian})
$$ L \coloneqq - X^2 - Y^2.$$
This operator is self-adjoint on $L^2(H)$ (with the usual Haar measure $d\mu$ arising from Lebesgue measure on $\R^3$), and so by the bounded functional calculus one can define bounded operators $m(L)$ on $L^2(H)$ for any $m \in L^\infty(\R)$, which commute with each other and with $L$.  
In \cite{hulanicki} (see also \cite{bg1} for an alternate proof) it was shown that if $m \in C^\infty_c(\R)$, then this operator is given by convolution with a Schwartz function $K \colon H \to \R$ (using the usual definition of a Schwartz function arising from the identification of $H$ with $\R^3$), thus
\begin{equation}\label{mlf}
 m(L) f = K*f
\end{equation}
for any $f \in L^2(H)$, where the convolution operation $*$ is defined in the usual fashion as
$$ K*f(p) = \int_H K(g) f(g^{-1} p)\ d\mu(g) = \int_H K(p g^{-1}) f(g)\ d\mu(g).$$
In particular, for such $m$, the operator $m(L)$ can be extended to functions in $C^0$ using the formula \eqref{mlf}.

Let $\varphi \colon \R \to \R$ be a smooth function supported on $[-1,1]$ that equals $1$ on $[-1/2,1/2]$.  For any dyadic number $N$ (that is, a number of the form $2^n$ for an integer $n$), define the Littlewood-Paley projections $P_{\leq N}, P_{<N}, P_N, P_{\geq N}, P_{>N}$ using the aforementioned functional calculus by the formulae
\begin{align*}
P_{\leq N} &\coloneqq \varphi( L / N^2 ) \\
P_{<N} &= P_{\leq N/2} \\
P_N &\coloneqq P_{\leq N} - P_{<N} \\
P_{\geq N} &\coloneqq I - P_{<N} \\
P_{>N}  &\coloneqq I - P_{\leq N}
\end{align*}
where $I$ is the identity operator.  By the above discussion, each of these operators is well defined on $C^0$.

Intuitively, one should think of $P_{\leq N} \phi$ as a smooth restriction of $\phi$ to ``frequencies $\lesssim N$'', or to ``spatial scales $\gtrsim 1/N$''; similar interpretations exist for the other Littlewood-Paley operators.

We record some basic facts on how Littlewood-Paley projections interact with $C^k$ and $C^{k,\alpha}$ type spaces:

\begin{theorem}[Littlewood-Paley theory]\label{lp-theory}  Let $\phi \colon  H \to \R^D$ be bounded and smooth.
\begin{itemize}
\item[(i)]  (Scaling)  For any $\lambda>0$ and $N > 0$, we have
$$ P_{\leq N} ( \phi \circ \delta_\lambda ) = (P_{\leq N/\lambda} \phi ) \circ \delta_\lambda,$$
and similarly for $P_{<N}$, $P_N$, $P_{\geq N}$, $P_{>N}$.
\item[(ii)]  (Littlewood-Paley decomposition)  For any dyadic number $N_0$, we have
$$ \phi = P_{\leq N_0} \phi + \sum_{N>N_0} P_N \phi$$
where the sum is over dyadic numbers $N>N_0$, and the convergence is in the locally uniform topology.
\item[(iii)]  (Regularity)  If $N, M >0$ and $j,k \geq 0$, one has the estimates
\begin{align}
 \| \nabla^k P_{\leq N} \phi \|_{C^j_{1/N}} &\lesssim_{j,k} \| \nabla^k \phi \|_{C^0} \label{lp-2} \\
 \| P_{N} \phi \|_{C^{j}_{1/N}} &\lesssim_{j,k} N^{-k} \| \nabla^k \phi \|_{C^0}\label{lp-3} \\
 \| P_{N} \phi \|_{C^{j}_{1/N}} &\lesssim_{j,k} N^{-k-\alpha} \| \nabla^k \phi \|_{\dot C^{0,\alpha}}\label{lp-4}\\
 \| P_{N} \phi \|_{C^k_{1/M}}, \| P_{\leq N} \phi \|_{C^k_{1/M}}, \| P_{> N} \phi \|_{C^k_{1/M}} &\lesssim_{k} \| \phi \|_{C^k_{1/M}}. \label{lp-1}\\
 \| \nabla^j P_{> N} \phi \|_{C^0} &\lesssim_{j,k} N^{-k} \| \nabla^{j+k} \phi \|_{C^0}.\label{lp-5} 
\end{align}
\end{itemize}
\end{theorem}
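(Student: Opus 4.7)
By Hulanicki's theorem recalled earlier in the paper, each $m(L)$ for $m \in C^\infty_c(\R)$ is convolution against a Schwartz kernel, so I may write $P_{\leq N}\phi = K_{\leq N}*\phi$ and $P_N\phi = K_N*\phi$. I first need two ingredients about these kernels. The dilation identity $L(\phi \circ \delta_\lambda) = \lambda^2 (L\phi)\circ \delta_\lambda$ (immediate from \eqref{chain} and $L = -X^2 - Y^2$) will upgrade to the operator identity $\varphi(L/N^2) = (\delta_{1/N})^* \varphi(L) (\delta_N)^*$, which combined with the $N^4$ Jacobian of $\delta_N$ will force the kernel scaling
$$ K_{\leq N}(p) = N^4 K_{\leq 1}(\delta_N p), \qquad K_N(p) = N^4 K_1(\delta_N p). $$
Second, the symbol $m_N(\lambda) := \varphi(\lambda/N^2) - \varphi(4\lambda/N^2)$ defining $P_N$ vanishes identically on $|\lambda| \leq N^2/8$, and from this I expect a functional calculus argument to yield that $K_1$ has vanishing moments of every order against polynomials in the Heisenberg coordinates.

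\textbf{Parts (i), (ii), and the lossy bounds.} Part (i) will be a direct change of variables $h = \delta_\lambda g$ in the convolution integral, using that $\delta_\lambda$ is a group automorphism. For part (ii), I will apply $P_{\leq N}$ to the constant function $1$ to get $\int K_{\leq N}\,d\mu = \varphi(0) = 1$; combined with the Schwartz decay of $K_{\leq N}$ at scale $1/N$, this makes the family an approximate identity, so $P_{\leq N}\phi \to \phi$ locally uniformly for bounded continuous $\phi$, and the desired decomposition follows by telescoping $P_{\leq N} - P_{\leq N_0} = \sum_{N_0 < N' \leq N}P_{N'}$ and sending $N \to \infty$. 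Turning to (iii), Young's inequality with $\|K_{\leq N}\|_{L^1}, \|K_N\|_{L^1} \lesssim 1$ immediately yields \eqref{lp-1}; combining this with the intertwining identity $\nabla(K*\phi) = (\nabla K)*\phi$ (valid because $X, Y$ are right-invariant and Haar on $H$ is bi-invariant) and the scaling bound $\|\nabla^j K_{\leq N}\|_{L^1} \lesssim N^j$ (from \eqref{chain-iter}) will give \eqref{lp-2}.

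\textbf{Derivative gains in \eqref{lp-3}--\eqref{lp-5} (and the main obstacle).} The main obstacle is that Heisenberg derivatives do not commute with the spectral multiplier: a direct computation using \eqref{heisenberg} gives $[X, L] = 2YZ \neq 0$, so one cannot simply push $\nabla^k$ past the projection as in the Euclidean setting. My plan is to exploit the vanishing moments of $K_N$ instead, by writing
$$ P_N \phi(p) = \int K_N(g)\,\bigl[\phi(g^{-1}p) - T^{k-1}_p \phi(g^{-1}p)\bigr]\,d\mu(g), $$
where $T^{k-1}_p\phi$ is the non-commutative Taylor polynomial of $\phi$ at $p$ (a polynomial in the Heisenberg coordinates of $p^{-1}\cdot$ whose coefficients are iterated derivatives $X^\alpha Y^\beta Z^\gamma \phi(p)$ of total degree $\leq k-1$), which the integral kills since the moments of $K_N$ through degree $k-1$ vanish. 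The remainder is then controlled pointwise by $d(g,e)^k\|\nabla^k\phi\|_{C^0}$ for \eqref{lp-3} and by $d(g,e)^{k+\alpha}\|\nabla^k\phi\|_{\dot C^{0,\alpha}}$ for \eqref{lp-4}, while the kernel scaling gives $\int d(g,e)^m |K_N(g)|\,d\mu(g) \lesssim_m N^{-m}$; these combine to give the $j=0$ cases. The $j \geq 1$ cases will follow because $\nabla^j K_N$ is again a kernel of the same type (scale $1/N$, full moment vanishing), and \eqref{lp-5} will come from summing \eqref{lp-3} dyadically using (ii).
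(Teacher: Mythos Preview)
Your handling of (i), (ii), and the Taylor-remainder route to \eqref{lp-3}--\eqref{lp-5} is sound, though it differs from the paper: the paper instead factors $P_1 = m_k(L)L^k$ and integrates $L^k$ by parts onto $\phi$. Your moment-vanishing claim is correct (since $P_N = m_m(L)L^m$ for every $m$, and $L^m$ annihilates polynomials of homogeneous degree $<2m$), and $\nabla^j K_N$ inherits all vanishing moments because $\int (XK_N)\,q\,d\mu = -\int K_N\,(Xq)\,d\mu = 0$; you should spell these two points out, but the approach works.

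There is, however, a genuine gap in your treatment of \eqref{lp-2} (and consequently \eqref{lp-1}) for $k \geq 1$. The identity $\nabla(K*\phi) = (\nabla K)*\phi$ together with $\|\nabla^{j+k}K_{\leq N}\|_{L^1} \lesssim N^{j+k}$ only yields $\|\nabla^{j+k}P_{\leq N}\phi\|_{C^0} \lesssim N^{j+k}\|\phi\|_{C^0}$, whereas \eqref{lp-2} requires the bound $\lesssim N^{j}\|\nabla^k\phi\|_{C^0}$: you have placed all $j+k$ derivatives on the kernel rather than $k$ of them on $\phi$. Your Taylor/moment machinery does not help here, since $K_{\leq N}$ has total mass one and no vanishing moments to subtract against. (The same issue breaks your Young-inequality argument for \eqref{lp-1} once $N \gg M$: you only get $M^{-i}\|\nabla^i P_{\leq N}\phi\|_{C^0} \lesssim (N/M)^i\|\phi\|_{C^0}$.) This is precisely the non-commutativity obstacle you correctly flagged---but then did not address for $P_{\leq N}$. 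The paper's fix is an integration-by-parts identity using the \emph{left}-invariant fields $\tilde X = \partial_x$, $\tilde Y = \partial_y + x\partial_z$: one checks $(\tilde X K)*\phi = K*(X\phi)$, and combining this with $X = \tilde X + yZ$ and $Z = YX - XY$ produces a representation $(XK)*\phi = K'*(X\phi) + K''*(Y\phi)$ with $K',K''$ again Schwartz. Inducting on $k$ then transfers $k$ derivatives from the kernel to $\phi$, giving \eqref{lp-2}; \eqref{lp-1} follows as the $j=0$ special case.
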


Readers who are familiar with the Littlewood-Paley theory of Euclidean spaces should see that these estimates are perfectly analogous to their Euclidean counterparts.

\begin{proof}  From \eqref{chain} one has for any test function $\phi$ and $\lambda > 0$ that
$$ L (\phi \circ \delta_\lambda) = (\lambda^2 L \phi) \circ \delta_\lambda,$$
thus $\lambda^2 L$ is $L$ conjugated by the operation of composition with $\delta_\lambda$.  Since functional calculus respects conjugation, we conclude that
$$ m(L) (\phi \circ \delta_\lambda) = (m(\lambda^2 L) \phi) \circ \delta_\lambda$$
for any bounded multiplier $m \in L^\infty$; the claims in (i) then follow.

Now we prove (ii).  By telescoping series, it suffices to show that $P_{\leq N} \phi$ converges to $\phi$ as $N \to \infty$.  By the results of Hulanicki \cite{hulanicki}, the operator $P_{\leq 1}$ takes the convolution form
$$ P_{\leq 1} \phi(p) = \int_H K(g) \phi(g^{-1} p)\ d\mu(g)$$
for a Schwartz function $K$, hence by part (i)
\begin{equation}\label{pnp}
 P_{\leq N} \phi(p) = \int_H K(g) \phi((\delta_{1/N} g)^{-1} p)\ d\mu(g).
\end{equation}
To conclude it suffices to show that the Schwartz function $K$ has total mass $1$.  But by functional calculus, $P_{\leq N} \phi$ converges in $L^2$ to $\phi$ if $\phi \in L^2$, and this is only consistent with \eqref{pnp} if $K$ has total mass $1$.

Now we prove (iii).  By the scaling (i) we may take $N=1$.

We begin with \eqref{lp-2} for $N=1$.  It suffices to show that
$$
 \| \nabla^{j+k} P_{\leq 1} \phi \|_{C^0} \lesssim_{j,k} \| \nabla^k \phi \|_{C^0}.
$$
Write $P_{\leq 1} \phi = K * \phi$ for a Schwartz $K$, then $\nabla^{j+k} (K * \phi) = (\nabla^{j+k} K) * \phi$, and it suffices to show that
\begin{equation}\label{lp2-0}
 \| (\nabla^{j+k} K) * \phi \|_{C^0} \lesssim_{j,k,K} \| \nabla^k \phi \|_{C^0}
\end{equation}
for any Schwarz function $K$.  We prove this by induction on $k$.  The case $k=0$ is immediate from Young's inequality.  If $k \geq 1$ and the claim has already been proven for $k-1$, we write
 $$\| (\nabla^{j+k} K) * \phi \|_{C^0} \lesssim \| (\nabla^{j+k-1} X K) * \phi \|_{C^0} + \| (\nabla^{j+k-1} Y K) * \phi \|_{C^0}.$$
Integration by parts gives the identity
\begin{align*}
 (XK) * \phi &= ((\tilde X + yZ) K) * \phi \\
&= (\tilde X K) * \phi + Z(yK) * \phi \\
&= K * X \phi + yK * Z \phi \\
&= K * X \phi + yK * (YX-XY) \phi \\
&= K * X \phi + (\tilde Y(yK)) * X \phi - (\tilde X(yK)) * Y \phi
\end{align*}
where $\tilde X \coloneqq \frac{\partial}{\partial x}$, $\tilde Y \coloneqq \frac{\partial}{\partial y} + x \frac{\partial}{\partial z}$ are the left-invariant counterparts to the right-invariant vector fields $X,Y$, and (by slight abuse of notation) $y$ denotes the coordinate function $[x,y,z] \mapsto y$ on $H$.  Thus one has a representation
$$ (XK) * \phi = K' * (X\phi) + K'' * Y \phi$$
for some further Schwartz functions $K', K''$, which implies
$$ (\nabla^{j+k-1} X K) * \phi = (\nabla^{j+k-1} K') * X \phi + (\nabla^{j+k-1} K'') * Y \phi$$
and hence by the induction hypothesis
\begin{align*}
\| (\nabla^{j+k-1} X K) * \phi \|_{C^0}  &\lesssim_{j,k,K} \| \nabla^{k-1} X \phi \|_{C^0} + \| \nabla^{k-1} Y \phi \|_{C^0}  \\
&\lesssim \| \nabla^k \phi \|_{C^0}.
\end{align*}
Similarly for $(\nabla^{j+k-1} Y K) * \phi$.  This establishes \eqref{lp2-0} and hence \eqref{lp-2}.

Now we turn to the $N=1$ case of \eqref{lp-3}.  It suffices to show that
$$ \| \nabla^{l} P_1 \phi \|_{C^0} \lesssim_{l,k} \| \nabla^k \phi \|_{C^0}$$
for all $l \geq 0$. We can factor $P_1 = m_k(L) L^k$ for some $m_k \in C^\infty_c(\R)$, and hence
$$ \nabla^l P_1 \phi = \nabla^l K_{k} * L^k \phi$$
where $K_k$ is the convolution kernel of $m_k(L)$.  The operator $L^k$ is of order $2k$; using integration by parts to move $k$ of these derivatives onto $\nabla^l K_{k}$, we conclude a representation formula
$$ \nabla^l P_1 \phi = \sum_D K_{k,l,D} * D \phi$$
where $D$ ranges over the components of $\nabla^k$ and the $K_{k,l,D}$ are Schwartz functions.  The claim then follows from Young's inequality.

To adapt the above argument to prove \eqref{lp-4}, we would have to establish the estimate
$$ \| \nabla^l (L^k)^* K_k * \phi \|_{C^0} \lesssim_{l,k} \| \phi\|_{\dot C^{0,\alpha}}.$$
Since $m_k$ vanishes near the origin, $m_k(L)$ annihilates $1$, and hence $K_k$ has mean zero; thus $\nabla^l (L^k)^* K_k$ also has mean zero. We can then write
$$ \nabla^l (L^k)^* K_k * \phi(p) = \int_H \nabla^l (L^k)^* K_k(g) (\phi(g^{-1} p) - \phi(p))\ d\mu(g);$$
bounding $\phi(g^{-1} p) - \phi(p) = O( \| \phi \|_{\dot C^{0,\alpha}} d(0,g)^{\alpha})$ and using the Schwartz nature of $K_k$, we obtain the claim.

Now we establish the $N=1$ case of \eqref{lp-1}.  It suffices by the triangle inequality to establish the claim for $P_{\leq 1}$.
It suffices to show that
$$ \| \nabla^j P_{\leq 1} \phi \|_{C^0} \lesssim_j \| \nabla^j \phi \|_{C^0}$$
for all $j \geq 0$.
But this follows from \eqref{lp-2}.

Finally, \eqref{lp-5} follows from \eqref{lp-2} and the triangle inequality when $k=0$, and from \eqref{lp-3} and the triangle inequality when $k \geq 1$.
\end{proof}

\section{Perturbation theory for a bilinear form}\label{perturb-sec}

As mentioned in the introduction, a key aspect of Proposition \ref{induct} is finding, for a given $\psi \colon H \to \R^{29}$, a ``good'' solution $\phi \colon  H \to \R^{29}$ to the differential equation \eqref{x-ortho} which is as smooth as $\psi$.  To solve this equation, we will first develop a perturbative theory in which we find a solution $\phi$ to the equation
\begin{equation}\label{bf}
 B(\phi,\psi) = F
\end{equation}
for given $\psi$, $F$, with bounds on $\phi$ in terms of $\psi$ and $F$.  Note that this system is highly underdetermined (two equations in $29$ unknowns), so solutions will be far from unique.

The perturbation theorem we will prove (using a variant of the Nash-Moser iteration scheme) will start with an approximate solution $\tilde \phi$ solving a low frequency analogue
\begin{equation}\label{lfe}
 B(\tilde \phi, P_{\leq N_0} \psi) = 0
\end{equation}
of \eqref{bf}, and find a solution $\phi$ to \eqref{bf} that is close to \eqref{bf} in a good norm (specifically, $C^{20,\alpha}$ norm) if $F$ is suitably small and $\tilde \phi$ is not too large.  In Section \ref{conclude} we will apply this theorem with a suitable explicit choice of $\tilde \phi$.

To solve the equation \eqref{bf} with $F = (F_X,F_Y)$, it would suffice by \eqref{b-expand} to solve the linear system of equations
\begin{align*}
\phi \cdot X \psi &= 0 \\
\phi \cdot XX \psi &= - F_X \\
\phi \cdot Y \psi &= 0 \\
\phi \cdot YY \psi &= - F_Y.
\end{align*}
As there are no derivatives being placed on $\phi$, this task is easily accomplished when one has the freeness hypothesis that $X\psi(p), XX \psi(p), Y \psi(p), YY \psi(p)$ are linearly independent at each $p$.  Indeed, if for each $p$ we let $T_\psi(p) \colon \R^{29} \to \R^4$ denote the linear map
$$ T_\psi(p) v \coloneqq ( v \cdot X \psi(p), v \cdot XX \psi(p), v \cdot Y \psi(p), v \cdot YY \psi(p) )$$
then the freeness hypothesis asserts that $T_\psi(p)$ has full rank (in a certain quantitative sense), and if one defines the \emph{pseudoinverse} $T_\psi(p)^{-1} \colon \R^4 \to \R^{29}$ of $T_\psi(p)$ by the formula
$$ T_\psi(p)^{-1} \coloneqq T_\psi(p)^* (T_\psi(p) T_\psi(p)^*)^{-1} $$
where $T_\psi(p)^* \colon \R^4 \to \R^{29}$ is the adjoint map to $T_\psi(p)$, then $T_\psi(p) T_\psi(p)^{-1}$ is the identity on $\R^4$, thus we have the pointwise identities
\begin{equation}\label{abcd}
\begin{split}
T_\psi^{-1}(a,b,c,d) \cdot X \psi &= a \\
T_\psi^{-1}(a,b,c,d) \cdot XX \psi &= b \\
T_\psi^{-1}(a,b,c,d) \cdot Y \psi &= c \\
T_\psi^{-1}(a,b,c,d) \cdot YY \psi &= d
\end{split}
\end{equation}
for any smooth $a,b,c,d \colon H \to \R$, and one has the explicit solution
\begin{equation}\label{phi}
 \phi(p) \coloneqq T_\psi(p)^{-1}( 0, -F_X(p), 0, -F_Y(p) )
\end{equation}
to \eqref{bf}.

Unfortunately, this solution to \eqref{bf} has a significant drawback for our purposes: the presence of derivatives $X \psi, XX \psi, Y \psi, YY \psi$ in the definition of $T_\psi$ will ensure that the solution $\phi$ is less regular than the function $\psi$, which is unacceptable for the purposes of proving Proposition \ref{induct}, due to our need to iteratively apply this theorem in the proof of Claim \ref{consequence}.  In particular, the $C^{20,\alpha}$ type control on $\psi$ only gives $C^{18,\alpha}$ type control on $T_\psi$, and hence on $\phi$.  To not lose derivatives, and recover a solution $\phi$ in the high regularity space $C^{20,\alpha}$, we will need a more complicated solution to \eqref{bf} than \eqref{phi} constructed by a Nash-Moser type scheme.  More precisely, we show the following perturbation theorem.

\begin{proposition}[Perturbation theorem]\label{perturb}  Let $M$, $\psi$ be as in Proposition \ref{induct}, and let $F\colon H \to \R^2$ be a smooth function such that $\|F\|_{C^{24}} < \infty$.  Let $\tilde \phi \colon  H \to \R^{29}$ be a smooth solution to the low frequency equation \eqref{lfe} with $\| \tilde \phi \|_{C^{40}} < \infty$.  Then there exists a smooth solution $\phi$ to \eqref{bf} obeying the bound
\begin{equation}\label{ax}
 \| \phi - \tilde \phi \|_{C^{20,\alpha}} \lesssim_{C_0} \eps
\end{equation}
and also the variant estimate
\begin{equation}\label{ax-2}
 \| X(\phi-\tilde \phi) \cdot Y \psi \|_{C^0}, \| Y(\phi-\tilde \phi) \cdot X \psi \|_{C^0} \lesssim_{C_0} A^{-1} \eps
\end{equation}
where $\eps$ is the quantity
\begin{equation}\label{eps-def}
 \eps \coloneqq A \|F\|_{C^{24}} + A^{-10} \| \tilde \phi \|_{C^{40}}.
\end{equation}
\end{proposition}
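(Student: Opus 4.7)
The approach is a Nash-Moser type iteration organised by Littlewood-Paley frequency scales $N_k := 2^k N_0$. Write $\psi_k := P_{\leq N_k}\psi$ and inductively construct $\phi^{(0)} := \tilde\phi$, $\phi^{(k+1)} := \phi^{(k)} + \delta^{(k)}$, together with the true residual $E_k := B(\phi^{(k)}, \psi) - F$. The key perturbative device is the pseudoinverse $T_{\psi_k}^{-1}$ defined as in \eqref{phi}: the freeness hypothesis \eqref{free} on $\psi$, combined with Theorem \ref{lp-theory} and the smallness of $\psi - \psi_k$ provided by \eqref{j-hold}, ensures that $T_{\psi_k}$ is quantitatively of full rank for every $k$, so $T_{\psi_k}^{-1}$ is well-defined with uniformly controlled $C^{20,\alpha}$ norms. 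Setting
$$\delta^{(k)} := T_{\psi_k}^{-1}\bigl(0, -E_k^X, 0, -E_k^Y\bigr),$$
the pointwise identities \eqref{abcd} inserted into \eqref{b-expand} give $B(\delta^{(k)}, \psi_k) = -E_k$ exactly, so
$$E_{k+1} = E_k + B(\delta^{(k)}, \psi) = B(\delta^{(k)}, P_{> N_k}\psi),$$
the crucial gain being that the right-hand side is a high-high frequency interaction that is small by Littlewood-Paley theory.

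The heart of the convergence argument is the geometric contraction $\|E_{k+1}\|_{C^{24}} \leq \tfrac12 \|E_k\|_{C^{24}}$, provided $N_0$ is large enough. This rests on the bound $\|\nabla P_{>N_k}\psi\|_{C^0} \lesssim N_k^{-k}\|\nabla^{k+1}\psi\|_{C^0} \lesssim C_0^2 (N_k A)^{-k}$ coming from \eqref{lp-5} together with the derivative bounds $\|\nabla^j\psi\|_{C^0} \lesssim C_0^2 A^{-j+1}$ (for $j \leq 20$) extracted from \eqref{j-hold}; paired against $\|\delta^{(k)}\|_{C^{25}} \lesssim_{C_0} \|E_k\|_{C^{24}}$ this gives the factor of $\tfrac12$. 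The initial residual is
$$\|E_0\|_{C^{24}} \leq \|F\|_{C^{24}} + \|B(\tilde\phi, P_{>N_0}\psi)\|_{C^{24}} \lesssim_{C_0} \|F\|_{C^{24}} + A^{-10}\|\tilde\phi\|_{C^{40}},$$
where the second term is obtained by applying \eqref{lp-5} with $k = 10$ to trade $\tfrac{}{}$ derivatives on $\psi$ for powers of $A^{-1}$ via \eqref{j-hold}. This matches \eqref{eps-def} up to the extra factor of $A$ on $\|F\|_{C^{24}}$, which is absorbed in the very first application of the pseudoinverse (reflecting one derivative of slack inherent in passing from the forcing term to the unknown).

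Summing the telescoping series $\phi := \tilde\phi + \sum_k \delta^{(k)}$ yields \eqref{ax}, and $B(\phi, \psi) = F$ follows by passing $k \to \infty$ in the identity for $E_k$. For the auxiliary estimate \eqref{ax-2}, the decisive observation is that \eqref{abcd} forces $\delta^{(k)} \cdot W \psi_k = 0$ pointwise for $W = X, Y$ at every stage. Differentiating by $W'$ (with $W' \neq W \in \{X,Y\}$) and splitting $\psi = \psi_k + P_{>N_k}\psi$ reduces each cross term $W'(\phi - \tilde\phi) \cdot W\psi$ to a sum of contributions of the shape $\delta^{(k)} \cdot W'W\psi_k$ plus a better-behaved Littlewood-Paley tail, each of which carries an $A^{-1}$ factor by \eqref{j-hold}, giving \eqref{ax-2}. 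The dominant obstacle throughout is the classical Nash-Moser derivative-budget: the pseudoinverse nominally loses two derivatives, and these must be repaid by mollifying its input via Theorem \ref{lp-theory}. Closing the iteration at the target regularity $C^{20,\alpha}$ is precisely what forces the stronger hypotheses $\|F\|_{C^{24}}$ and $\|\tilde\phi\|_{C^{40}}$ in the statement, and the $A^{-1}$ smallness in \eqref{j-hold} is what makes the very first step of the iteration contractive despite the absolute lower bound $M \geq C_0^{-1}$ built into \eqref{xyp-comp}.
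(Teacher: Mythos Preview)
Your scheme has a genuine loss-of-derivatives gap that prevents the iteration from closing. The pivotal claim ``$\|\delta^{(k)}\|_{C^{25}} \lesssim_{C_0} \|E_k\|_{C^{24}}$'' is simply false: $\delta^{(k)} = T_{\psi_k}^{-1}\cdot(0,-E_k^X,0,-E_k^Y)$ is a pointwise product, and by the Leibniz rule its $C^{25}$ norm contains the term $T_{\psi_k}^{-1}\cdot\nabla^{25}E_k$, which is not controlled by $\|E_k\|_{C^{24}}$. Multiplying by a smooth function cannot manufacture an extra derivative. Consequently the contraction $\|E_{k+1}\|_{C^{24}}\le\tfrac12\|E_k\|_{C^{24}}$ does not follow: expanding $\nabla^{24}B(\delta^{(k)},P_{>N_k}\psi)$ forces either $25$ derivatives onto $\delta^{(k)}$ or at least $21$ derivatives onto $\psi$, and neither is quantitatively bounded by the hypotheses (recall \eqref{j-hold} only controls $\nabla^j\psi$ for $j\le 20$). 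The same problem already appears in your estimate for $E_0$: bounding $\|B(\tilde\phi,P_{>N_0}\psi)\|_{C^{24}}$ by $A^{-10}\|\tilde\phi\|_{C^{40}}$ requires $\|\nabla^j P_{>N_0}\psi\|_{C^0}$ for $j$ up to $25$, which is not available. You say the loss is ``repaid by mollifying its input'', but your scheme only mollifies $\psi$; nothing smooths the residual $E_k$ or the running approximation $\phi^{(k)}$, and that is exactly where the loss accumulates.

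The paper's iteration supplies the two missing ingredients. First, it does \emph{not} apply the pseudoinverse to the raw residual. Instead it uses the symmetric identity \eqref{b-ident} to solve, at each dyadic $N>N_0$, the increment equation
\[
B(\phi_N,P_{\le N}\psi)+B(P_{\le N}\phi_{<N},P_N\psi)=P_N F,
\]
via $\phi_N = T_{P_{\le N}\psi}^{-1}(a_N,b_N,c_N,d_N)$ with $a_N = -(XP_{\le N}\phi_{<N})\cdot P_N\psi$, etc. The point of \eqref{b-ident} is that these inputs carry \emph{no} derivatives on the high-frequency piece $P_N\psi$; all derivatives land on $P_{\le N}\phi_{<N}$. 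Second, the previous approximation $\phi_{<N}$ is mollified by $P_{\le N}$ before being differentiated, so $WP_{\le N}\phi_{<N}$ and $WWP_{\le N}\phi_{<N}$ are controlled in arbitrarily high $C^j_{1/N}$ norms by just $\|\phi_{<N}\|_{C^2}$. The price is a leftover error $\sum_{N>N_0} B(P_N\psi,P_{>N}\phi_{<N})$, but this is a high--high paraproduct: both factors are localised above frequency $N$, so the product is far smoother than either $\nabla\phi$ or $\nabla\psi$ alone, and one recovers the full $C^{24}$ bound \eqref{phi-bounds} needed to run the outer Neumann series. Without these two moves---the symmetry trick and the mollification of $\phi_{<N}$---the derivative budget cannot be balanced.
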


Note here that there is some loss of derivatives when passing from $\tilde \phi, F$ to $\phi$, as $C^{20,\alpha}$ is less regular than $C^{24}$ or $C^{40}$; however, we will only apply this proposition with the approximate solution $\tilde \phi$ and the error term $F$ being of low-frequency  or of ``high-high paraproduct'' type respectively, and as such will lie in regular spaces such as $C^{40}$ or $C^{24}$ with even some room to spare. In fact we will ultimately take $F=0$, though for iteration purposes it is convenient to state the proposition in a manner that allows for non-zero $F$.   The technical variant \eqref{ax-2} of \eqref{ax} is needed to ensure that certain cross-terms arising in the increment property \eqref{more-ortho} are of manageable size (and in particular do not generate an unwanted factor of $M$ in the estimates, which would otherwise arise if one naively estimated these dot products using \eqref{xyp-comp} and the Cauchy-Schwarz inequality).

\begin{proof}  It will suffice to find a $\phi$ with the stated bounds solving the approximate equation
\begin{equation}\label{phi-bounds}
\| B(\phi,\psi) - F \|_{C^{24}} \lesssim_{C_0} A^{-2} \eps,
\end{equation}
rather than \eqref{bf}, since one can then iteratively replace $(\tilde \phi, F)$ by the residual $(0, F - B(\phi,\psi))$ (which reduces $\eps$ to $O_{C_0}(A^{-1} \eps)$) and sum the resulting Neumann series to obtain an exact solution to \eqref{bf}, thanks to the linearity of this equation in $\phi$ and $F$.  (Strictly speaking, one should not sum the infinite Neumann series, as the resulting sum $\phi$ will then only lie in $C^{20,\alpha}$ rather than being smooth; instead, one should sum the Neumann series out to some extremely large finite length so that the $C^{24}$ norm of the residual $F$ becomes extremely small, and then apply the explicit solution \eqref{phi} to eliminate this last residual, which will be acceptable if the $C^{24}$ norm of $F$ is small enough since $\psi$ is assumed to be smooth.)

In the spirit of the Nash-Moser iteration scheme, we construct the (approximate) solution $\phi$ in stages, starting with a ``low frequency'' component $\phi_{\leq N_0}$ that solves a low-frequency equation
$$ B( \phi_{\leq N_0}, P_{\leq N_0} \psi ) = P_{\leq N_0} F $$
and then iteratively adding on higher frequency components $\phi_N$, $N > N_0$ in order to approximately solve a higher-frequency equation
\begin{equation}\label{pnf}
 B( \phi_{\leq N}, P_{\leq N} \psi ) \approx P_{\leq N} F.
\end{equation}
As in the Nash-Moser scheme, we will need to apply a mollification to $\phi$ at each stage in order to counteract the loss of derivatives problem; this explains the presence of the Littlewood-Paley projection $P_{\leq N}$ applied to the $\phi_{<N}$ type terms in the construction that follows.  (This also explains the need to allow an inhomogeneous term $F$ in \eqref{bf}, even though in our applications we will eventually set this term equal to zero.)

We turn to the details.   
Write $F = (F_X,F_Y)$, and define the function
\begin{equation}\label{phiny}
 \phi_{\leq N_0} \coloneqq \tilde \phi + T_{P_{\leq N_0} \psi}^{-1}( 0, -P_{\leq N_0} F_X, 0, -P_{\leq N_0} F_Y );
\end{equation}
observe from \eqref{phiny}, \eqref{b-expand}, \eqref{abcd}, \eqref{lfe} that one has
\begin{equation}\label{b-ident-1}
 B( \phi_{\leq N_0}, P_{\leq N_0} \psi ) = P_{\leq N_0} F.
\end{equation}
Next, for every dyadic $N > N_0$ we recursively define $\phi_N$ by the formula
\begin{equation}\label{phin-form}
 \phi_N \coloneqq T_{P_{\leq N} \psi}^{-1}( a_N, b_N, c_N, d_N )
\end{equation}
where
\begin{align*}
a_N &\coloneqq - (X P_{\leq N} \phi_{<N}) \cdot P_N \psi \\
b_N &\coloneqq - (XX P_{\leq N} \phi_{<N}) \cdot P_N \psi - P_N F_X \\
c_N &\coloneqq - (Y P_{\leq N} \phi_{<N}) \cdot P_N \psi \\
d_N &\coloneqq - (YY P_{\leq N} \phi_{<N}) \cdot P_N \psi - P_N F_Y
\end{align*}
and
$$ \phi_{<N} \coloneqq \phi_{\leq N_0} + \sum_{N_0 < M < N} \phi_M$$
and similarly
$$ \phi_{\leq N} \coloneqq \phi_{\leq N_0} + \sum_{N_0 < M \leq N} \phi_M$$
and $M$ is understood to range over dyadic numbers.  Observe from \eqref{b-ident}, \eqref{abcd} that
\begin{equation}\label{b-ident-2}
 B( \phi_N, P_{\leq N} \psi ) + B( P_{\leq N} \phi_{<N}, P_N \psi ) = P_N F
\end{equation}
(compare with \eqref{b-diff}).  As mentioned in the introduction, the rather complicated-looking form of $\phi_N$ (which the author only arrived at after some trial and error) is chosen so that no derivatives are placed on $P_N \psi$, and also there is some mollification of the $\phi_{<N}$ term in order to avoid the loss of derivatives problem.

We have the following estimates on $T_{P_{\leq N} \psi}^{-1}$:

\begin{lemma}[Controlling the pseudoinverse]\label{tpsi}  For any $N \geq N_0$, one has
$$ 
\| T_{P_{\leq N} \psi}^{-1} \|_{C^{18}_A} \lesssim_{C_0} A
$$
and
$$
\| \nabla^{18} T_{P_{\leq N} \psi}^{-1} \|_{C^{50}_{1/N}} \lesssim_{C_0} A^{-17}.$$
\end{lemma}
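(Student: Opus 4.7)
The plan is to use the explicit formula $T^{-1} = T^{\ast}(TT^{\ast})^{-1}$ combined with a pointwise bound derived from the freeness hypothesis \eqref{free}, and propagate it through Leibniz expansions controlled by \eqref{j-hold} and Theorem \ref{lp-theory}. Abbreviate $T := T_{P_{\leq N}\psi}$. By repeated use of \eqref{lp-5} and the derivative bounds $\|\nabla^{k}\psi\|_{C^{0}} \lesssim_{C_{0}} A^{1-k}$ (for $2 \leq k \leq 20$) extracted from \eqref{j-hold}, together with the fact that $N \geq N_{0}$ is very large, the hypotheses \eqref{xyp-comp}, \eqref{xyp-nondeg}, \eqref{free} and the derivative bounds all transfer to $P_{\leq N}\psi$ up to negligible error. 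Cauchy--Binet \eqref{cauchy-binet} gives $\det(TT^{\ast}) = |X(P_{\leq N}\psi) \wedge XX(P_{\leq N}\psi) \wedge Y(P_{\leq N}\psi) \wedge YY(P_{\leq N}\psi)|^{2}$; the simple wedge inequality $|v_{1} \wedge \cdots \wedge v_{6}| \leq |v_{1}\wedge v_{2}\wedge v_{3}\wedge v_{4}|\,|v_{5}|\,|v_{6}|$ applied with $v_{5} = Z(P_{\leq N}\psi)$, $v_{6} = XY(P_{\leq N}\psi)$ (each of size $\lesssim_{C_{0}} A^{-1}$), together with \eqref{free}, yields $\det(TT^{\ast}) \gtrsim_{C_{0}} A^{-4}M^{4}$. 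Since $|T^{-1}|^{2} = \operatorname{tr}((TT^{\ast})^{-1}) = \operatorname{tr}(\operatorname{adj}(TT^{\ast}))/\det(TT^{\ast})$ and each diagonal entry of $\operatorname{adj}(TT^{\ast})$ equals the squared $3$-wedge of a triple drawn from $\{X\psi,Y\psi,XX\psi,YY\psi\}$, bounded via Hadamard by $|X\psi|^{2}|Y\psi|^{2}|XX\psi|^{2} \lesssim_{C_{0}} M^{4}A^{-2}$, I obtain the pointwise bound $|T^{-1}|^{2} \lesssim_{C_{0}} A^{2}$.

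For the scale-$A$ bound I need $A^{k}\|\nabla^{k}T^{-1}\|_{C^{0}} \lesssim_{C_{0}} A$ for $0 \leq k \leq 18$. Differentiating $T^{-1} = T^{\ast}(TT^{\ast})^{-1}$ via Leibniz, and using the recursion $\nabla(TT^{\ast})^{-1} = -(TT^{\ast})^{-1}\nabla(TT^{\ast})(TT^{\ast})^{-1}$ iteratively, $\nabla^{k}T^{-1}$ is a finite sum of products of $\nabla^{i}T$ (with $0 \leq i \leq k$) and copies of $(TT^{\ast})^{-1}$. Exploiting the block structure of $T$ (rows $X\psi,Y\psi$ of size $\sim M$; rows $XX\psi,YY\psi$ of size $\lesssim_{C_{0}} A^{-1}$) together with the Schur-complement computation giving $(TT^{\ast})^{-1}$ with block entries of sizes $\sim M^{-2}$, $\sim A/M$, $\sim A^{2}$, and the estimate $\|\nabla^{j}T\|_{C^{0}} \lesssim_{C_{0}} A^{-j}$ for $j \geq 1$ (again from \eqref{j-hold}), I verify inductively that $\|\nabla^{k}T^{-1}\|_{C^{0}} \lesssim_{C_{0}} A^{1-k}$. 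Summing gives $\|T^{-1}\|_{C^{18}_{A}} \lesssim_{C_{0}} A$.

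For the scale-$1/N$ bound I need $N^{-j}\|\nabla^{18+j}T^{-1}\|_{C^{0}} \lesssim_{C_{0}} A^{-17}$ for $0 \leq j \leq 50$. The case $j = 0$ is the $k = 18$ instance of the previous step. For $j \geq 1$, the Leibniz expansion involves derivatives of $P_{\leq N}\psi$ of order up to $20 + j$, exceeding the regularity of $\psi$; invoking \eqref{lp-2} with $k = 20$ gives $\|\nabla^{20+j}P_{\leq N}\psi\|_{C^{0}} \lesssim N^{j}\|\nabla^{20}\psi\|_{C^{0}} \lesssim_{C_{0}} N^{j}A^{-19}$. Each extra derivative beyond order $20$ therefore costs a factor of $N$, exactly cancelled by the $N^{-1}$ in the scale-$1/N$ norm, yielding $N^{-j}\|\nabla^{18+j}T^{-1}\|_{C^{0}} \lesssim_{C_{0}} A^{-17}$ uniformly in $j$; summation over $j \leq 50$ gives the claim.

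The main obstacle is the careful accounting of the block-structured pseudoinverse. The naive bound $|(TT^{\ast})^{-1}| \sim A^{2}$ paired with $|T| \sim M$ yields $|T^{-1}| \sim MA^{2}$, losing a power of $M$ relative to what is claimed. The correct bound demands tracking that the ``large'' columns of $T^{-1}$ (those dual to $XX\psi,YY\psi$) have size $\sim A$ while the ``small'' columns (dual to $X\psi,Y\psi$) have size $\sim M^{-1}$, and similarly after differentiating. The trace-plus-cofactor approach sketched above sidesteps this for the pointwise bound, but the Leibniz expansion of $\nabla^{k}T^{-1}$ still produces cross-block products that must be estimated with the block-decomposition in mind at every step.
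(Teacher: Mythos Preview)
Your approach is essentially the same as the paper's: both compute the pseudoinverse via $T^{-1}=T^{*}(TT^{*})^{-1}$ (the paper writes this as $\tfrac{1}{\det(TT^{*})}T^{*}\operatorname{adj}(TT^{*})$), obtain the determinant lower bound $\det(TT^{*})\gtrsim_{C_{0}}M^{4}A^{-4}$ from \eqref{free} via Cauchy--Binet, and then track the $(M\ \text{vs.}\ A^{-1})$ block structure through the Leibniz expansion so as not to lose a factor of $M$. The only organizational differences are that the paper handles both regimes at once by introducing the single quantity $B_{j}=A^{-j}+N^{j-18}A^{-18}$ (noting $B_{j}B_{k}\lesssim B_{j+k}$), whereas you split into the cases $k\leq 18$ and $k>18$; and the paper tracks blocks through $\operatorname{adj}(TT^{*})$ directly rather than via your trace identity and the recursion $\nabla(TT^{*})^{-1}=-(TT^{*})^{-1}\nabla(TT^{*})(TT^{*})^{-1}$. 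Both routes require the same block bookkeeping that you correctly flag as the main obstacle.
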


\begin{proof}  We abbreviate $T_{P_{\leq N} \psi}$ as $T$, then we can write the pseudoinverse $T_{P_{\leq N} \psi}^{-1} = T^{-1}$ as
$$ T^{-1} = \frac{1}{\det(TT^*)} T^* \mathrm{adj}(T T^*)$$
where $\mathrm{adj}(A)$ denotes the adjugate matrix of $A$.
Our task is then to show the bounds
$$ \left\| \nabla^j\left( \frac{1}{\det(TT^*)} T^* \mathrm{adj}(T T^*) \right) \right\|_{C^0} \lesssim_{C_0} A B_j$$
for $0 \leq j \leq 68$, where
$$ B_j \coloneqq A^{-j} + N^{j-18} A^{-18}.$$
The main difficulty here is not to lose a factor of $M$, which would be quite problematic for other parts of the argument.  (Actually, when computed carefully, some terms even gain a factor of $M$, but we will not exploit this.)

From \eqref{j-hold}, \eqref{xyp-comp}, and Theorem \ref{lp-theory}(iii), we have
$$ |\nabla^j W P_{\leq N} \psi| \lesssim_{C_0} M B_j$$
when $0 \leq j \leq 68$ and $W \in \{X,Y\}$ (indeed when $j>0$ we can delete the factor of $M$), and
$$ |\nabla^j W P_{\leq N} \psi| \lesssim_{C_0} A^{-1} B_j$$
when $0 \leq j \leq 68$ and $W \in \{XX, YY \}$.  Thus, viewing $T$ as a $4 \times 29$ matrix, with the second and third rows permuted to place the rows associated to first-order operators $X,Y$ on top and to second-order operators $XX, YY$ on the bottom, the first two rows of $\nabla^j T$ have norm $O_{C_0}(M B_j)$, and the bottom two have norm $O_{C_0}(A^{-1} B_j)$.  By the product rule (and noting that $B_j B_k \lesssim B_{j+k}$ for all $j,k \geq 0$) we conclude that the $4 \times 4$ matrix $\nabla^j(TT^*)$ has top left $2 \times 2$ block of size $O_{C_0}( M^2 B_j )$, the top right and bottom left blocks have size $O_{C_0}(A^{-1} M B_j)$, and the bottom right has size $O_{C_0}(A^{-2} B_j)$.  By the product rule and cofactor expansion, $\nabla^j \mathrm{adj}(TT^*)$ then has top left block of size $O_{C_0}(M^2A^{-4} B_j)$, top right and bottom left blocks of size $O_{C_0}( M^3 A^{-3} B_j)$, and bottom right block of size $O_{C_0}( M^4 A^{-2} B_j)$.  By the product rule, the $29 \times 4$ matrix $\nabla^j( T^* \mathrm{adj}(TT^*) )$ then has all rows of size $O_{C_0}( M^4 A^{-3} B_j )$ (some are lower order than this).  

Similarly, $\nabla^j( \det(TT^*) )$ has magnitude $O_{C_0}(M^4 A^{-4} B_j)$.  
Meanwhile, from \eqref{free}, \eqref{xyp-comp}, \eqref{j-hold}, and using \eqref{lp-5} to approximate $P_{\leq N} \psi$ by $\psi$ up to negligible error, we see that the vectors $W P_{\leq N} \psi$ for $W \in \{X,Y,Z,XX,YY,XY\}$ have magnitude $O_{C_0}(M)$ when $W \in \{X,Y\}$ and $O_{C_0}(A^{-1})$ for $W \in \{Z,XX,YY,XY\}$, with wedge product lower bound
$$ \left|\bigwedge_{W = X,Y,Z,XX,YY,XY} W P_{\leq N} \psi\right| \gtrsim_{C_0} M^2 A^{-4}.$$
In particular by Cauchy-Schwarz we have
$$ \left|\bigwedge_{W = X,Y,XX,YY} W P_{\leq N} \psi\right| \gtrsim_{C_0} M^2 A^{-2}.$$
From this and the Cauchy-Binet formula \eqref{cauchy-binet} we have the matching lower bound
$$ \det(TT^*) \gtrsim_{C_0} M^4 A^{-4}$$
for the determinant.  Hence by the quotient rule, $\nabla^j( \det(TT^*)^{-1} )$ has magnitude $O_{C_0}( M^{-4} A^4 B_j)$.  The claim now follows from the product rule.
\end{proof}

From this proposition (and the fact that $A$ is large compared with $N_0$) we have the estimate
$$
\| T_{P_{\leq N_0} \psi}^{-1} \|_{C^{40}} \lesssim_{C_0} A 
$$
while from Theorem \ref{lp-theory}(iii) we have
$$ \| P_{\leq N_0} F \|_{C^{24}} \lesssim \|F\|_{C^{24}}$$
and
$$ \| P_{\leq N_0} F \|_{C^{40}} \lesssim_{N_0} \|F\|_{C^{24}}$$
and thus from \eqref{phiny}, \eqref{algebra-1}, \eqref{eps-def}
\begin{equation}\label{sim}
 \| \phi_{\leq N_0} - \tilde \phi \|_{C^{24}} \lesssim_{C_0} \eps
\end{equation}
as well as the variant
\begin{equation}\label{sim-2}
 \| \phi_{\leq N_0} - \tilde \phi \|_{C^{40}} \lesssim_{C_0,N_0} \eps.
\end{equation}
From \eqref{sim-2}, \eqref{eps-def}, and the triangle inequality, we also have
\begin{equation}\label{sim-2a}
 \| \phi_{\leq N_0} \|_{C^{40}} \lesssim_{C_0,N_0} A^{10} \eps.
\end{equation}

Next, from Theorem \ref{lp-theory}(iii) we have
\begin{align*}
 \| \nabla^k P_{\leq N} \phi_{<N} \|_{C^{40}_{1/N}} &\lesssim \| \nabla^k \phi_{<N} \|_{C^0} \\
& \lesssim \| \phi_{<N} \|_{C^2}
\end{align*}
for $k=0,1,2$.  This implies in particular that
$$ \| W P_{\leq N} \phi_{<N} \|_{C^{40}_{1/N}}  \lesssim \| \phi_{<N} \|_{C^2}$$
for $W = X, Y, XX, YY$.

Further application of Theorem \ref{lp-theory}(iii) also yields the estimates
\begin{align*}
\| P_N \psi \|_{C^{40}_{1/N}} &\lesssim N^{-20-\alpha} \| \nabla^{20} \psi \|_{\dot C^{0,\alpha}} \\
&\lesssim  N^{-20-\alpha} A^{-18} \| \nabla^2 \psi \|_{C^{18,\alpha}_A} \\
&\lesssim_{C_0} N^{-20-\alpha} A^{-19} 
\end{align*}
and
\begin{align*} 
\| P_N F \|_{C^{40}_{1/N}} &\lesssim N^{-24} \| \nabla^{24} F \|_{C^0} \\
&\lesssim N^{-24} \| F \|_{C^{24}} \\
&\lesssim N^{-24} A^{-1} \eps.
\end{align*}

Finally, from Lemma \ref{tpsi} one has
$$ \| T_{P_{\leq N} \psi}^{-1} \|_{C^{40}_{1/N}} \lesssim_{C_0} A.$$ 
since $A^{1-j} \lesssim A N^j$ for $0 \leq j \leq 18$ and $A^{-17} N^{j-18} \lesssim A N^j$ for $18 \leq j \leq 40$.  

Inserting the above estimates and \eqref{algebra-1} into \eqref{phin-form}, we conclude that
\begin{equation}\label{oick}
 \| \phi_N \|_{C^{40}_{1/N}} \lesssim_{C_0} ( N^{-24} \eps + N^{-20-\alpha} A^{-18} \| \phi_{<N} \|_{C^2} ) 
\end{equation}
and in particular
$$ \| \phi_N \|_{C^2} \lesssim_{C_0} N^{-22} \eps + N^{-18-\alpha} A^{-18} \| \phi_{<N} \|_{C^2}.$$
By the triangle inequality and \eqref{eps-def} we thus have
\begin{align*}
 \| \phi_{\leq N} - \tilde \phi \|_{C^2} &\leq  \| \phi_{< N} - \tilde \phi \|_{C^2} + \| \phi_N \|_{C^2} \\ 
&\leq (1 + O_{C_0}(A^{-18} N^{-18-\alpha})) \| \phi_{<N} - \tilde \phi \|_{C^2} + O_{C_0}( A^{-18} N^{-18-\alpha} ) \| \tilde \phi \|_{C^2} + O_{C_0}(N^{-22} \eps) \\
&\leq (1 + O_{C_0}(A^{-18} N^{-18-\alpha})) \| \phi_{<N} - \tilde \phi \|_{C^2} + O_{C_0}( A^{-8} N^{-18-\alpha} \eps ) + O_{C_0}(N^{-22} \eps).
\end{align*}
Iterating this (using the discrete form of Gronwall's inequality) starting with \eqref{sim}, we conclude that
$$ \| \phi_{\leq N} - \tilde \phi \|_{C^2} \lesssim_{C_0} \eps $$
for any $N \geq N_0$, which by the triangle inequality and \eqref{eps-def} implies that
$$ \| \phi_{\leq N} \|_{C^2} \lesssim_{C_0} A^{10} \eps.$$
Inserting this back into \eqref{oick} we conclude that
\begin{equation}\label{sim-3}
 \| \phi_N \|_{C^{40}_{1/N}} \lesssim_{C_0} N^{-24} \eps + A^{-8} N^{-20-\alpha} \eps
\end{equation}
which implies in particular that the sum
$$ \phi \coloneqq \phi_{\leq N_0} + \sum_{N > N_0} \phi_N$$
converges in (say) the $C^2$ topology.

We now prove \eqref{ax}.  From \eqref{sim} and the triangle inequality it suffices to show that
$$
\left\| \sum_{N > N_0} \phi_N \right\|_{C^{20,\alpha}} \lesssim_{C_0} \eps.$$
From \eqref{sim-3} we have
$$
\| \phi_N \|_{C^{20}} \lesssim_{C_0} N^{-4} \eps + A^{-8} N^{-\alpha} \eps$$
and hence by the triangle inequality
$$
\| \sum_{N > N_0} \phi_N \|_{C^{20}} \lesssim_{C_0} \eps$$
(with some room to spare).  Thus it will suffice to show that
\begin{equation}\label{hold}
|\nabla^{20} \sum_{N > N_0} \phi_N(p) - \nabla^{20} \sum_{N > N_0} \phi_N(q)| \lesssim_{C_0} \eps d(p,q)^\alpha
\end{equation}
for any $p,q \in H$.  By the triangle inequality, the left-hand side of \eqref{hold} is at most
$$ \sum_{N>N_0} |\nabla^{20} \phi_N(p) - \nabla^{20} \phi_N(q)|.$$
On one hand, we may bound 
\begin{align*}
|\nabla^{20} \phi_N(p) - \nabla^{20} \phi_N(q)| &\lesssim \| \nabla^{20} \phi_N \|_{C^0} \\
&\lesssim N^{20} \| \phi_N \|_{C^{40}_{1/N}} \\
&\lesssim_{C_0} N^{-4} \eps + A^{-8} N^{-\alpha} \eps \\
&\lesssim N^{-\alpha} \eps.
\end{align*}
On the other hand, one has
\begin{align*}
|\nabla^{20} \phi_N(p) - \nabla^{20} \phi_N(q)| &\lesssim \| \nabla^{21} \phi_N \|_{C^0} d(p,q)\\
&\lesssim N^{21} \| \phi_N \|_{C^{40}_{1/N}} d(p,q)\\
&\lesssim_{C_0} (N^{-3} \eps + A^{-8} N^{1-\alpha} \eps) d(p,q)\\
&\lesssim N^{-\alpha} \eps (N d(p,q)).
\end{align*}
Thus the left-hand side of \eqref{hold} is bounded by
$$ \lesssim_{C_0} \eps \sum_N N^{-\alpha} \min( 1, N d(p,q))$$
and the claim \eqref{ax} follows by summing geometric series and using the hypothesis $0 < \alpha < 1$.

For future reference we observe that the above argument also gives the bound
\begin{equation}\label{ax-alt}
 \| \phi_{\leq N} - \tilde \phi \|_{C^{20,\alpha}} \lesssim_{C_0} \eps
\end{equation}
for any $N\geq N_0$.

Now we prove \eqref{phi-bounds}.  As $\phi_{\leq N}$ converges in $C^2$ to $\phi$ as $N \to \infty$, and $P_{\leq N} \psi$ converges in $C^2$ to $\psi$, we may write $B(\phi,\psi)$ as the uniform limit of $B(\phi_{\leq N}, P_{\leq N} \psi)$.  This telescopes to
\begin{align*}
B(\phi,\psi) &= B(\phi_{\leq N_0}, P_{\leq N_0} \psi) + \sum_{N > N_0} (B(\phi_{\leq N}, P_{\leq N} \psi) - B(\phi_{< N}, P_{< N} \psi)) \\
&=  B(\phi_{\leq N_0}, P_{\leq N_0} \psi) + \sum_{N > N_0} B(\phi_N, P_{\leq N} \psi) + B( P_N \psi, \phi_{<N} )
\end{align*}
where we have used the symmetry of $B$.  
From this and \eqref{b-ident-1}, \eqref{b-ident-2} we conclude that
$$ B(\phi,\psi) - F = \sum_{N>N_0} B( P_N \psi, P_{> N} \phi_{<N}).$$
The right-hand side is a ``high-high paraproduct'' of $\nabla \psi$ and $\nabla \phi$, and as such will have significantly more regularity than either $\nabla \psi$ or $\nabla \phi$ separately (closer to $C^{40}$ type regularity than $C^{20}$ type).  Indeed, by the triangle inequality we have
$$ \| B(\phi,\psi) - F\|_{C^{24}} \leq \sum_{N>N_0} \| B( P_N \psi, P_{> N} \phi_{<N}) \|_{C^{24}}.$$
Using the original form \eqref{bilinear-def} of $B$ and the product rule, the right-hand side is bounded by
$$ \lesssim \sum_{N>N_0} \sum_{j_1+j_2 = 24} \| \nabla P_N \psi \|_{C^{j_1}} \| \nabla P_{> N} \phi_{<N} \|_{C^{j_2}}.$$
Using Theorem \ref{lp-theory}(iii), \eqref{j-hold}, we have for any $0 \leq j_1 \leq 24$ that
\begin{align*}
\| \nabla P_N \psi \|_{C^{j_1}} &\lesssim N^{j_1+1} \| P_N \psi \|_{C^{j_1+1}_{1/N}} \\
&\lesssim N^{j_1+1} N^{-20-\alpha} \| \nabla^{20} \psi \|_{\dot C^{0,\alpha}} \\
&\lesssim N^{j_1-19-\alpha} A^{-18-\alpha} \| \nabla^2 \psi \|_{C^{18,\alpha}_A} \\
&\lesssim_{C_0} N^{j_1-19-\alpha} A^{-19-\alpha}.
\end{align*}
In a similar spirit, for any $0 \leq j_2 \leq 24$ one has from Theorem \ref{lp-theory}(iii), \eqref{sim-2a}, \eqref{sim-3}, (and being somewhat inefficient with the estimates) that
\begin{align*}
\| \nabla P_{> N} \phi_{<N} \|_{C^{j_2}} &\lesssim N^{j_2} \| \nabla P_{>N} \phi_{<N} \|_{C^{j_2}_{1/N}} \\
&\lesssim N^{j_2 - 39} \| \nabla^{40} \phi_{<N} \|_{C^0} \\
&\lesssim N^{j_2 - 39} ( \| \nabla^{40} \phi_{\leq N_0} \|_{C^0} + \sum_{N_0 < N' < N} \| \nabla^{40} \phi_{N'} \|_{C^0} ) \\
&\lesssim_{C_0,N_0} N^{j_2 - 39} \left( A^{10} \eps + \sum_{N_0 < N' < N} (N')^{40} ( (N')^{-24} \eps + A^{-8} (N')^{-20-\alpha} \eps ) \right) \\
&\lesssim_{C_0,N_0} N^{j_2 - 19} A^{10} \eps    
\end{align*}
and thus
$$ \| B(\phi,\psi) - F\|_{C^{24}} \lesssim_{C_0,N_0} N^{24 - 19 - 19 - \alpha} A^{10-19 - \alpha} \eps$$
which gives \eqref{phi-bounds} with some room to spare.

Finally, we prove \eqref{ax-2}.  We just establish the estimate for $X(\phi-\tilde \phi) \cdot Y \psi$, as the estimate for $Y(\phi-\tilde \phi) \cdot X \psi$ is completely analogous.  By the Leibniz rule we have
$$ X(\phi-\tilde \phi) \cdot Y \psi = X ( (\phi-\tilde \phi) \cdot Y \psi ) - (\phi-\tilde \phi) \cdot XY \psi $$
and hence by the triangle inequality we have
$$ \| X(\phi-\tilde \phi) \cdot Y \psi\|_{C_0} \leq \| (\phi-\tilde \phi) \cdot Y \psi \|_{C^1} + \| \phi - \tilde \phi \|_{C^0} \| XY \psi \|_{C^0}.$$
The second term is acceptable thanks to \eqref{ax}, \eqref{j-hold}, so it remains to show that
$$ \| (\phi-\tilde \phi) \cdot Y \psi \|_{C^1} \lesssim_{C_0} A^{-1} \eps.$$
By the triangle inequality, the left-hand side is at most
$$ \| (\phi_{\leq N_0} -\tilde \phi) \cdot Y P_{\leq N_0} \psi \|_{C^1} + 
\| (\phi_{\leq N_0} -\tilde \phi) \cdot Y P_{> N_0} \psi \|_{C^1} + \sum_{N > N_0}
\| \phi_N \cdot Y P_{\leq N} \psi \|_{C^1} + 
\| \phi_N \cdot Y P_{> N} \psi \|_{C^1}.$$
From \eqref{phiny}, \eqref{abcd} one has
$$ (\phi_{\leq N_0} -\tilde \phi) \cdot Y P_{\leq N_0} \psi  = 0.$$
From \eqref{algebra-1}, \eqref{sim}, Theorem \ref{lp-theory}(iii), \eqref{j-hold} one has
\begin{align*}
\| (\phi_{\leq N_0} -\tilde \phi) \cdot Y P_{> N_0} \psi \|_{C^1} &\lesssim
\|\phi_{\leq N_0} -\tilde \phi\|_{C^1} \| Y P_{> N_0} \psi \|_{C^1} \\
&\lesssim_{C_0} \eps N_0^2 \| P_{>N_0} \psi \|_{C^2_{1/N_0}} \\
&\lesssim_{C_0} \eps \| \nabla^2 \psi \|_{C^0} \\
&\lesssim_{C_0} \eps A^{-1}.
\end{align*}
From \eqref{phin-form}, \eqref{abcd} one has
$$ \phi_N \cdot Y P_{\leq N} \psi = - (Y P_{\leq N} \phi_{<N}) \cdot P_N \psi$$
and hence by \eqref{algebra-1}, Theorem \ref{lp-theory}(iii), \eqref{ax-alt}, \eqref{eps-def}, \eqref{j-hold}, one has
\begin{align*}
\|\phi_N \cdot Y P_{\leq N} \psi \|_{C^1} &\lesssim \| Y P_{\leq N} \phi_{<N} \|_{C^1} \| P_N \psi \|_{C^1} \\
&\lesssim \| P_{\leq N} \phi_{<N} \|_{C^2} N \| P_N \psi \|_{C^1_{1/N}} \\
&\lesssim \| \phi_{<N} \|_{C^2} N^{-19} \| \nabla^{20} \psi \|_{C^0} \\
&\lesssim_{C_0} A^{10} \eps N^{-19} A^{-18}.
\end{align*}
Finally, from \eqref{algebra-1}, Theorem \ref{lp-theory}(iii), \eqref{sim-3} one  has
\begin{align*}
\| \phi_N \cdot Y P_{> N} \psi \|_{C^1} &\lesssim
\| \phi_N \|_{C^1} \|Y P_{> N} \psi \|_{C^1}  \\
&\lesssim N \| \phi_N \|_{C^{40}_{1/N}} N^2 \| P_{>N} \psi \|_{C^2_{1/M}} \\
&\lesssim_{C_0} N (N^{-24} \eps + A^{-8} N^{-20-\alpha} \eps) N^2 N^{-20} \| \nabla^{20} \psi \|_{C^0} \\
&\lesssim_{C_0} N (N^{-24} \eps + A^{-8} N^{-20-\alpha} \eps) N^{-18} A^{-19}.
\end{align*}
Inserting all these estimates, we obtain the claim.
\end{proof}

\section{A little bit of quantitative topology}\label{top-sec}

Let $1 \leq k < D$.  Suppose one has a family $v_1,\dots,v_k \colon H \to \R^D$ of continuous maps such that for each point $p$, $v_1(p),\dots,v_k(p)$ form an orthonormal system in $\R^D$.  Is it always possible to find an additional continuous map $v_{k+1} \colon H \to \R^D$ such that $v_1(p),\dots,v_{k+1}(p)$ is also orthonormal?   As we shall see in the next section, a (quantitative version of) this lifting property will be useful to construct a solution to the low-frequency equation \eqref{lfe}.  

An equivalent way to phrase this question is as follows.  Define the \emph{Steifel manifold} $V_{k,D} \subset \R^{kD}$ to be the space of $k$-tuples $(v_1,\dots,v_k)$ of orthonormal vectors in $\R^D$; this is a smooth compact submanifold of $\R^{kD}$, and the projection map $\pi \colon V_{k+1,D} \to V_{k,D}$ defined by $\pi(v_1,\dots,v_{k+1}) \coloneqq (v_1,\dots,v_k)$ gives $V_{k+1,D}$ the structure of an $S^{D-k-1}$-bundle over $V_{k,D}$.  The question is then whether every continuous map $f \colon H \to V_{k,D}$ has a continuous lift $\tilde f \colon H \to V_{k+1,D}$ (that is, $\tilde f$ is continuous with $\pi \circ \tilde f = f$).

Another equivalent formulation is the following.  Let $E \subset H \times S^{D-1}$ be the set of pairs $(p, v)$ where $p \in H$ and $v$ is a unit vector orthogonal to $v_1(p),\dots,v_k(p)$, and let $\pi' \colon E \to H$ be the projection map $\pi'(p,v) \coloneqq p$.  It is easy to check that $E$ is a fibre bundle over $H$ whose fibres are all homeomorphic to $S^{D-k-1}$.  The question is then whether this fibre bundle has a global section $p \mapsto (p, v_{k+1}(p))$.

For some special values of $(k,D)$, there exist global sections from $V_{k,D}$ to $V_{k+1,D}$, and one can obtain a lift simply by composing the original map $f$ with this section.  For instance, when $(k,D)=(2,3)$, one can simply take $v_3(p)$ to be the cross product of $v_1(p)$ and $v_2(p)$.  Unfortunately, such global sections are very rare: a result of Whitehead \cite{whitehead} shows that these exist\footnote{We thank David Speyer for this reference, which was provided at {\tt mathoverflow.net/questions/314613}.} if and only if $(k,D)$ is equal to $(1,2m)$, $(m-1,m)$, $(2,7)$, or $(3,8)$ for some natural number $m$.  Nevertheless, because the domain $H$ is so low dimensional, and because many of the low-dimensional homotopy groups of the fibres $S^{D-k-1}$ vanish, one can use some very basic obstruction theory to solve the lifting problem when $D-k$ is large:

\begin{proposition}[Non-uniform lifting]\label{nonun}  Suppose $1 \leq k \leq D-4$.  Then every continuous map $f\colon H \to V_{k,D}$ can be lifted continuously to a map $\tilde f\colon H \to V_{k+1,D}$.  
\end{proposition}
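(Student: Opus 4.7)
The plan is to recast the lifting problem as a section problem for a fibre bundle over $H$, and then to build the section skeleton-by-skeleton by elementary obstruction theory, exploiting both the $3$-dimensionality of $H$ and the high connectivity of the fibres $S^{D-k-1}$.

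First I would note that a continuous lift $\tilde f \colon H \to V_{k+1,D}$ of $f$ is exactly a continuous section of the pullback $f^\ast V_{k+1,D} \to H$, which is a locally trivial fibre bundle with fibre $S^{D-k-1}$. Via the identification with $\R^3$, $H$ is contractible and carries a CW structure of dimension $3$ (e.g.\ pull back a simplicial structure from a triangulation of $\R^3$). At this point one could in fact just invoke the classical fact that any fibre bundle with structure group a topological group over a paracompact contractible base is trivial, and hence admits a section; but since the point of the section is to prepare the way for quantitative refinements later in the section, I would prefer to argue constructively.

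Concretely, I would build the section $\sigma$ one skeleton at a time. Define $\sigma$ arbitrarily on the $0$-skeleton by picking any point in each fibre. Given $\sigma$ on the $(i-1)$-skeleton, extending it across an $i$-cell reduces, after a local trivialisation of $f^\ast V_{k+1,D}$, to the problem of extending a continuous map $S^{i-1} \to S^{D-k-1}$ to a continuous map from the closed $i$-disk; the obstruction to doing so is an element of $\pi_{i-1}(S^{D-k-1})$. The hypothesis $k \leq D-4$ gives $D-k-1 \geq 3$, so the fibre $S^{D-k-1}$ is $2$-connected, i.e.
\[
\pi_0(S^{D-k-1}) = \pi_1(S^{D-k-1}) = \pi_2(S^{D-k-1}) = 0,
\]
and so the obstruction vanishes for $i = 1, 2, 3$. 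Iterating, $\sigma$ extends over the full $3$-skeleton of $H$, yielding the required lift $\tilde f$.

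There is essentially no real obstacle at this qualitative level: every potential obstruction is automatically annihilated either by the connectivity of the fibre (in dimensions $\leq 2$) or by the $3$-dimensionality of $H$ (in dimensions $\geq 4$). The genuine work, carried out in the quantitative analogues of Proposition~\ref{nonun} in the rest of Section~\ref{top-sec}, is to redo this cell-by-cell construction with explicit bounds on smoothness so that the resulting section can be fed into the perturbation theory of Section~\ref{perturb-sec}.
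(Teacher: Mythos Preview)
Your proof is correct and follows essentially the same approach as the paper's: both recast the lifting problem as a section problem for an $S^{D-k-1}$-bundle over $H$, and both construct the section skeleton-by-skeleton using the vanishing of $\pi_n(S^{D-k-1})$ for $n \leq 2$ (which is exactly what $k \leq D-4$ guarantees). The only cosmetic difference is that the paper extends over each cell by first using the homotopy lifting property to push the problem into a small trivialising ball, whereas you invoke a local trivialisation over the whole (contractible) cell directly; either way the obstruction lies in $\pi_{i-1}(S^{D-k-1})=0$ and the argument goes through.
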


\begin{proof}  Using the bundle formulation (with $f = (v_1,\dots,v_k)$), it suffices to construct a global section of $E$ on $H$.
The three-dimensional manifold $H$ has the structure of a CW-complex, and in particular one has a nested sequence $\Delta^0 \subset \Delta^1 \subset \Delta^2 \subset \Delta^3 = H$ of $n$-skeletons $\Delta^n$ of $H$, consisting of the unions of cells of dimension at most $n$.  As $\Delta^0$ is discrete, one can clearly construct a section of $E$ on $\Delta^0$.  It then suffices to show that for each $n=0,1,2$, a section $p \mapsto (p, v_{k+1}(p))$ of $E$ on $\Delta^n$ can be continuously extended to a section of $E$ on $\Delta^{n+1}$.  As continuity is a local property, it suffices to show that any continuous section on the boundary $\partial C^{n+1}$ of an open $n+1$-dimensional cell $C^{n+1}$ in the CW-complex can be continuously extended to the closed cell $\overline{C^{n+1}}$.

Pick a point $p$ in $C^{n+1}$, and let $B$ be a small open ball centred at $p$ in $C^{n+1}$.  The boundary $\partial C^{n+1}$ can be contracted to the boundary of the ball $B$, so by the homotopy lifting property one can extend the section on $\partial C^{n+1}$ to the region $\overline{C^{n+1}} \backslash B$.  On the other hand, if $B$ is small enough, the portion of the bundle $E$ over $B$ trivialises and is thus homeomorphic to $B \times S^{D-k-1}$.  Using this trivialisation, the section on $\partial B$ can be now identified with a continuous map from the $n$-dimensional sphere $\partial B$ to the fibre $S^{D-k-1}$.  Since $D-k-1 \geq 3 > n$, the homotopy group $\pi_n(S^{D-k-1})$ is trivial, and hence this continuous map can be extended continuously to $B$.  Gluing together all these extensions, we obtain a continuous extension of the section to $\overline{C^{n+1}}$ as desired.
\end{proof}

\begin{remark}  Because $H$ is topologically trivial, one could also obtain this lift (without the requirement $k \leq D-4$) by working on a ball $B(0,R)$ and continuously extending $R$ from zero to infinity, using the Gram-Schmidt process as one goes along to keep everything orthogonal; see for instance\footnote{We thank Stan Palasek for these references, which provide yet another link between the arguments here and those used for the isometric embedding problem.} \cite[Section 2.4]{delellis} or \cite[p. 387-388]{nashk}.  However, this argument does not seem to easily extend to the quantitative version that we need below, due to the non-compact nature of $H$ (or equivalently, the unbounded nature of $R$).
\end{remark}

In our application, Proposition \ref{nonun} is not sufficient because we will need some uniform control on the lift $\tilde f$ (in the spirit of Gromov \cite{gromov}).  Fortunately, due to the uniformly bounded geometry of $H$, such uniform control is indeed available:

\begin{proposition}[Uniform lifting]\label{equi}  Suppose $1 \leq k \leq D-4$.  Let ${\mathcal F}$ be a uniformly equicontinuous family of continuous maps $f \colon H \to V_{k,D}$.  Then there is a uniformly equicontinuous family $\tilde {\mathcal F}$ of continuous maps $\tilde f \colon H \to V_{k+1,D}$, such that every $f \in {\mathcal F}$ has a lift $\tilde f \in \tilde {\mathcal F}$.  
\end{proposition}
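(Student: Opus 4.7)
The plan is to quantify each step of the obstruction-theoretic argument of Proposition~\ref{nonun}, using the compactness of $V_{k,D}$ and the low dimensionality of $H$. Let $\omega\colon[0,\infty)\to[0,\infty)$ with $\omega(0^+)=0$ be a common modulus of continuity for $\mathcal{F}$, i.e., $|f(p)-f(q)|\le \omega(d(p,q))$ for all $f\in\mathcal{F}$ and $p,q\in H$. I would first record two uniform ingredients. Since $\pi\colon V_{k+1,D}\to V_{k,D}$ is a smooth $S^{D-k-1}$-bundle over a \emph{compact} base, compactness supplies an $\epsilon_0>0$ and, for each $v\in V_{k,D}$, a smooth trivialization $\tau_v\colon \pi^{-1}(B(v,\epsilon_0))\to B(v,\epsilon_0)\times S^{D-k-1}$ with Lipschitz bounds on $\tau_v,\tau_v^{-1}$ depending only on $(k,D)$. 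Secondly, because $k\le D-4$ we have $\pi_n(S^{D-k-1})=0$ for $0\le n\le 2$, and this vanishing is \emph{quantitative}: any continuous $g\colon S^n\to S^{D-k-1}$ extends to some $G\colon D^{n+1}\to S^{D-k-1}$ whose modulus of continuity is controlled in terms of that of $g$ and $(k,D)$ alone. This follows from the classical extend-into-the-ball-then-retract scheme: cone $g$ into the ambient ball $B^{D-k}$ (a uniformly continuous operation on maps), then compose with radial projection from a generic interior point $b$; the existence of such a $b$ uniformly in $g$ is ensured by the codimension $D-k\ge n+2$ via a Sard/transversality argument.

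Next, I would impose a uniform CW structure on $H$ at a scale $\delta>0$ chosen so that $\omega(10\delta)\le \epsilon_0$, e.g.\ by tiling $H$ in the $(x,y,z)$ coordinates into cubes of side length $\delta$. The resulting skeleta $\Delta^0\subset\Delta^1\subset\Delta^2\subset\Delta^3=H$ have all cells of diameter $\lesssim \delta$ and uniformly bounded combinatorial valence, and for every $f\in\mathcal{F}$ the image of each closed cell lies inside a single $\epsilon_0$-ball of $V_{k,D}$, where the bundle trivializes via some $\tau_v$. I then repeat the argument of Proposition~\ref{nonun}: pick an arbitrary lift on $\Delta^0$, and for $n=0,1,2$ extend cell-by-cell from $\Delta^n$ to $\Delta^{n+1}$. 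On each open $(n+1)$-cell $C$, a local trivialization converts the boundary values into a continuous map $\partial C\cong S^n\to S^{D-k-1}$, which we extend to $\overline C\cong D^{n+1}$ using the quantitative extension lemma above; the assembled lift $\tilde f\colon H\to V_{k+1,D}$ is continuous because distinct extensions agree on shared faces. By construction the modulus of continuity of $\tilde f$ on each cell depends only on $\omega$, $\delta$, and $(k,D)$, and since $\delta$ itself depends only on $\omega$ and $\epsilon_0$, taking $\tilde{\mathcal{F}}$ to be the collection of all lifts so produced yields a uniformly equicontinuous family.

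The main obstacle I anticipate is delivering the quantitative version of $\pi_n(S^{D-k-1})=0$ in a genuinely uniform form --- specifically, certifying that the extension $G$ of $g$ obeys a modulus of continuity bound that is independent of $g$. The crucial point is that the hypothesis $k\le D-4$ gives codimension $D-k\ge n+2$ for all $n\le 2$, providing the room needed to choose the radial-projection center $b$ away from the image of the cone of $g$ and depending continuously (hence with bounded modulus) on $g$; equivalently, one may smooth $g$ and apply transversality. Once this lemma is in hand the remainder of the proof is a routine uniformly-geometric repackaging of the CW obstruction-theory argument of Proposition~\ref{nonun}.
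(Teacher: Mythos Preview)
Your proposal is correct and follows the same obstruction-theoretic strategy as the paper: impose a CW structure on $H$ with uniformly bounded geometry, then extend the lift skeleton by skeleton using $\pi_n(S^{D-k-1})=0$ for $n\le 2$, keeping track of moduli at each stage. The one substantive difference lies in how the quantitative extension from $S^n$ to $D^{n+1}$ is obtained. You propose the direct geometric construction---cone into $B^{D-k}$, then radially project from a point at controlled distance from the image, such a point existing by the codimension bound $D-k\ge n+2$. The paper instead argues by soft compactness: the family of boundary maps $g\colon \partial B\to S^{D-k-1}$ is precompact in the uniform topology by Arzel\`a--Ascoli, so it suffices to extend uniformly near each fixed $f_0$, which is done by perturbing one fixed extension $\tilde f_0$ and applying radial projection. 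The paper in fact mentions your route as an alternative in a footnote, citing the quantitative null-homotopy literature (Gromov; Chambers--Manin--Weinberger), and notes that it would yield more explicit constants. Your scale-$\delta$ cube tiling plays the same role as the paper's $\Gamma$-equivariant CW complex, which achieves uniformity by reducing to finitely many cell shapes via right translation; either device works.
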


\begin{proof}  We repeat the proof of Proposition \ref{nonun}, but taking care to obtain uniformly equicontinuous control on all the objects used in the argument.  The main difficulty arises from the non-compact nature of $H$, so we will make our constructions equivariant with respect to the right-action of the cocompact lattice $\Gamma$.

It will be convenient to use a CW-complex of $H$ in which the cells take the form $C \gamma$ with $\gamma \in \Gamma$ and $C$ drawn from a finite list of polytopes in $\R^3 \equiv H$.  The precise choice of complex is not important, but one can for instance take the $3$-cells to be ``cubes'' of the form
$$\{ [x,y,z] \in H: x,y,z \in (0,1)\} \gamma,$$
the $2$-cells to be either ``squares'' of the form
$$  \{ [x,y,0] \in H: x,y \in (0,1) \} \gamma, \{ [0,y,z] \in H: y,z \in (0,1) \} \gamma$$
or ``triangles'' of the form
$$ \{ [x,0,z] \in H: x \in (0,1), 0 < z < 1-x \} \gamma, \{ [x,0,z] \in H: x \in (0,1), 1-x < z < 1 \} \gamma,$$
the $1$-cells to be ``line segments'' of the form
\begin{align*}
&\{ [x,0,0] \in H: x \in (0,1) \} \gamma \\
&\{ [0,y,0] \in H: y \in (0,1) \} \gamma \\
&\{ [0,0,z] \in H: z \in (0,1) \} \gamma \\
&\{ [x,0,-x] \in H: x \in (0,1) \} \gamma
\end{align*}
and the $0$-cells to be the individual points in $\Gamma$. As before, we define the $n$-skeleta $\Delta^n$ for $n=0,1,2,3$ as the union of all cells of dimension at most $n$.

Let $(v_1,\dots,v_k)$ be an element of ${\mathcal F}$, and let $E$ be the bundle constructed previously.  Our task is to construct a global section $p \mapsto (p, v_{k+1}(p))$ of this bundle that lies in a uniformly equicontinuous family as $(v_1,\dots,v_k)$ ranges over ${\mathcal F}$.  On the $0$-skeleton $\Delta^0 = \Gamma$, this is easily achieved by selecting $(p, v_{k+1}(p))$ arbitrarily from the fibre of $E$ at $p$ for each $p \in \Delta^0$.  It thus suffices to show for each $n=0,1,2$ that any section of $E$ on $\Delta^n$ that lies in a uniformly equicontinuous family can be extended to $\Delta^{n+1}$, with the extension also lying in a uniformly equicontinuous family.

As before, it suffices to work on each cell $C^{n+1} \gamma$, that is to say for each $C^{n+1}$ in the above list of $n+1$-polytopes and each $\gamma \in \Gamma$, every section $p \mapsto (p, v_{k+1}(p))$ of $E$ on $\partial C^{n+1} \gamma$ lying in a uniformly equicontinuous family can be extended to $\overline{C^{n+1}} \gamma$ while still lying in a uniformly equicontinuous family; it is easy to see that by gluing these extensions for all cells $C^{n+1} \gamma$ we obtain an extension to $\Delta^{n+1}$ that still lies in a uniformly equicontinuous family.

As the metric on $H$ is right-invariant, arbitrary translations of functions in a uniformly equicontinuous family still form a uniformly equicontinuous family, so we may normalise $\gamma=0$, thus $C^{n+1}$ is now a polytope drawn from a finite list.  As before, we pick a point $p$
in the interior of $C^{n+1}$ (e.g., the centroid), and let $B$ be a small ball centred at $p$.  As $(v_1,\dots,v_k)$ belongs to a uniformly equicontinuous family, we can choose $B$ uniformly over this family so that the bundle $E$ over $B$ can be trivialised to $B \times S^{D-k-1}$, and furthermore the trivialisation map is also uniformly equicontinuous.  

We can extend the section $p \mapsto (p, v_{k+1}(p))$ on $\partial C^{n+1}$ to the region $C^{n+1} \backslash B$ by taking an arbitrary smooth connection $\nabla$ of the bundle of $V_{k+1,D}$ over $V_{k,D}$, pulling it back to $C^{n+1} \backslash B$, and then following that connection along the inward radial vector field to $p$, which connects each point of the polytope boundary $\partial C^{n+1}$ to a unique point in the sphere $\partial B$.  One can check that this extension lies in a uniformly equicontinuous family.  The remaining task is to extend the section from $\partial B$ to $B$ in a uniformly equicontinuous fashion.  Using the trivialisation, and rescaling $B$ to be the unit ball, the problem then reduces to the following: given a continuous map $f \colon \partial B_{\R^{n+1}}(0,1) \to S^{D-k-1}$ in a uniformly equicontinuous family, construct an extension $\tilde f \colon \overline{B_{\R^{n+1}}(0,1)} \to S^{D-k-1}$ that also lies in a uniformly equicontinuous family.

As the homotopy group $\pi_n(S^{D-k-1})$ is trivial, every $f \colon \partial B_{\R^{n+1}}(0,1) \to S^{D-k-1}$ in the family has at least one continuous extension $\tilde f \colon \overline{B_{\R^{n+1}}(0,1)} \to S^{D-k-1}$; the issue is that of uniform equicontinuity of $\tilde f$.  But by the Arzel\'a-Ascoli theorem, the family of $f$ is precompact in the uniform topology.  Thus it suffices to show that for each continuous $f_0 \colon \partial B_{\R^n}(0,1) \to S^{D-k-1}$, all continuous $f \colon \partial B_{\R^{n+1}}(0,1) \to S^{D-k-1}$ in a sufficiently small neighbourhood of $f_0$ in the uniform topology and in a uniformly equicontinuous family, have a continuous extension $\tilde f \colon \overline{B_{\R^{n+1}}(0,1)} \to S^{D-k-1}$ that also lies in a uniformly equicontinuous family, where this latter family is permitted to depend on $f_0$.  But one can achieve\footnote{In lieu of this compactness argument, one can also use the literature on quantitative null-homotopy \cite{gromov}, \cite{chambers}, \cite{cdw}, which would give a more explicit dependence on constants.} this by letting $\tilde f_0 \colon \overline{B_{\R^{n+1}}(0,1)} \to S^{D-k-1}$ be an arbitrary continuous extension of $f_0$ and then defining $\tilde f \colon \overline{B_{\R^{n+1}}(0,1)} \to S^{D-k-1}$ in polar coordinates by the formula
$$ \tilde f(r\omega) \coloneqq \pi( \tilde f_0(r\omega) + \eta(r) (f(\omega) - \tilde f_0(r\omega) ) )$$
for $0 \leq r \leq 1$ and $\omega \in \partial B_{\R^n}(0,1)$, where $\eta \colon \R \to [0,1]$ is a continuous function (depending on $\tilde f_0$) supported on a sufficiently small neighbourhood of $1$ with $\eta(1)=1$, and $\pi \colon \R^{D-k} \backslash \{0\} \to S^{D-k-1}$ is the radial projection to the unit sphere.   One easily checks that for $f$ close enough to $f_0$ in the uniform topology, $\tilde f$ is well-defined (with the argument of $\pi$ avoiding the origin) and is a continuous extension of $f$ that lies in a uniformly equicontinuous family, giving the claim.
\end{proof}

Now we establish a variant using the $C^j$ norms:

\begin{corollary}[$C^j$ lifting]\label{Steifel}  Let $1 \leq k \leq D - 4$ and let $j \geq 1$.  Let $v_1,\dots,v_k \colon H \to S^{D-1}$ be smooth functions with 
$$ \| \nabla v_i \|_{C^j} \leq K$$
for some $0 < K < 1$ and all $i=1,\dots,k$, such that for every $p$, $v_1(p),\dots,v_k(p)$ form an orthonormal system in $\R^D$.  Then there is a smooth function $v_{k+1} \colon H \to S^{D-1}$ with
$$ \| \nabla v_{k+1} \|_{C^j} \lesssim_{D,j} K $$
such that for every $p \in H$, $v_{k+1}(p)$ is orthogonal to $v_1(p),\dots,v_k(p)$. 
\end{corollary}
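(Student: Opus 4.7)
The plan is to reduce to Proposition \ref{equi} via a rescale--smooth--correct strategy whose scale is dictated by the regularity parameter $K$. Since $\|\nabla v_i\|_{C^j}\le K$ encodes that each $v_i$ varies on spatial scale $\sim 1/K$, the first step is to set $\tilde v_i := v_i\circ\delta_{1/K}$; the chain rule \eqref{chain-iter} then gives $\|\nabla^m \tilde v_i\|_{C^0}\le K^{-(m-1)}$ for $1\le m\le j+1$, and in particular $\|\nabla \tilde v_i\|_{C^0}\le 1$. The resulting family of $k$-tuples $(\tilde v_1,\dots,\tilde v_k)$ (as $K\in(0,1)$ and the $v_i$ vary) is thus uniformly $1$-Lipschitz, so Proposition \ref{equi} furnishes a continuous $\tilde w\colon H\to S^{D-1}$ that is pointwise orthogonal to each $\tilde v_i$ and lies in a uniformly equicontinuous family with some modulus $\omega$ depending only on $D$ and $k$.

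Next, smooth $\tilde w$ at a fixed unit scale via the Littlewood--Paley projection $\tilde w_\ast := P_{\le N_0}\tilde w$ (defined on $C^0$ as in Section \ref{lp-sec}), for a large dyadic $N_0$ depending only on $D$ and $k$. Using the Schwartz convolution representation \eqref{pnp} together with the uniform continuity provided by $\omega$, we have $\|\tilde w_\ast - \tilde w\|_{C^0}\to 0$ as $N_0\to\infty$, so $N_0$ may be fixed to ensure $\|\tilde w_\ast - \tilde w\|_{C^0}\le 1/(4(k+1))$; Theorem \ref{lp-theory}(iii) combined with $\|\tilde w\|_{C^0}=1$ then gives $\|\nabla^m \tilde w_\ast\|_{C^0}\lesssim_{N_0,m} 1$ for every $m\ge 0$. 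One then restores exact orthogonality via Gram--Schmidt and normalization:
\[
\tilde w' := \tilde w_\ast - \sum_{i=1}^k(\tilde w_\ast\cdot\tilde v_i)\,\tilde v_i, \qquad \tilde w'' := \tilde w'/|\tilde w'|.
\]
The exact identity $\tilde w\cdot\tilde v_i\equiv 0$ forces $\tilde w_\ast\cdot\tilde v_i = (\tilde w_\ast - \tilde w)\cdot\tilde v_i$, bounding each correction coefficient by $\|\tilde w_\ast-\tilde w\|_{C^0}$ and thereby guaranteeing $|\tilde w'|\ge 1/2$ pointwise; thus $\tilde w''$ is a smooth $S^{D-1}$-valued map pointwise orthogonal to each $\tilde v_i$, and $v_{k+1}:=\tilde w''\circ\delta_K$ is a smooth $S^{D-1}$-valued map pointwise orthogonal to each $v_i$.

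The remaining task is the quantitative $C^j$ estimate. Expanding $\nabla^m[(\tilde w_\ast\cdot\tilde v_i)\tilde v_i]$ by Leibniz and using $\|\nabla^a\tilde w_\ast\|_{C^0}\lesssim_a 1$ together with $\|\nabla^l \tilde v_i\|_{C^0}\le K^{-(l-1)}$ for $l\ge 1$ and $\|\tilde v_i\|_{C^0}=1$, each resulting term is bounded by $K^{-(m-1)}$ --- the extremum being attained when all derivatives fall on a single $\tilde v_i$ factor --- giving $\|\nabla^m\tilde w'\|_{C^0}\lesssim_{D,j,m} K^{-(m-1)}$ for $1\le m\le j+1$. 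The Fa\`a di Bruno formula for the smooth map $x\mapsto x/|x|$ on $\{|x|\ge 1/2\}$ expresses $\nabla^m\tilde w''$ as a sum over partitions $a_1+\dots+a_S = m$ with $a_s\ge 1$ of products $\prod_s \|\nabla^{a_s}\tilde w'\|_{C^0}\lesssim K^{-(m-S)}\le K^{-(m-1)}$, so the same bound $K^{-(m-1)}$ propagates to $\tilde w''$. Finally \eqref{chain-iter} yields $\|\nabla^m v_{k+1}\|_{C^0} = K^m\|\nabla^m\tilde w''\|_{C^0}\lesssim_{D,j} K$ for $1\le m\le j+1$, which sums to the desired $\|\nabla v_{k+1}\|_{C^j}\lesssim_{D,j} K$. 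The main conceptual point --- and the reason $\delta_{1/K}$ is the correct rescaling --- is the sharp $K^{-(l-1)}$ (rather than $K^{-l}$) growth of $\|\nabla^l\tilde v_i\|_{C^0}$: the one-power gain is exactly what the Gram--Schmidt correction and normalization preserve, and exactly what the final $\delta_K$ rescaling converts to the single factor of $K$ required in the conclusion.
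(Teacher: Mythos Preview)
Your proof is correct and follows the same overall architecture as the paper: rescale by $\delta_{1/K}$ to normalise the Lipschitz constant, apply Proposition \ref{equi} to get a uniformly equicontinuous lift, mollify to restore smoothness, then Gram--Schmidt to restore exact orthogonality, and finally undo the rescaling. The one substantive difference is the mollification device: you use the Littlewood--Paley projection $P_{\le N_0}$ already available from Section \ref{lp-sec}, whereas the paper builds an ad hoc smoothing by averaging the continuous lift over a partition of unity adapted to the lattice $\delta_r\Gamma$ at a small scale $r$. Your choice is arguably cleaner here since the convolution machinery is already in place and the uniform $C^0$ approximation follows directly from the Schwartz kernel representation \eqref{pnp} and the common modulus of continuity; the paper's partition-of-unity averaging is more elementary but introduces an extra construction. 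Both yield the same $\|\nabla^m(\cdot)\|_{C^0}\lesssim 1$ bounds for the smoothed object and the identical $K^{1-m}$ bookkeeping through Gram--Schmidt and normalisation.
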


\begin{proof}  We may rescale so that each $v_i$, $i=1,\dots,k$ has a Lipschitz constant of $O(1)$, and $\| \nabla v_i \|_{C^j_K} \leq 1$.  In particular, for fixed $K$, $(v_1,\dots,v_k)$ lies in a uniformly equicontinuous family independent of $K \in (0,1)$.  Applying Proposition \ref{equi}, we can find a continuous map $v_{k+1} \colon H \to S^{D-1}$ in a uniformly equicontinuous family such that $v_{k+1}(p)$ is orthogonal to $v_1(p),\dots,v_k(p)$ for all $p \in H$.

The remaining task is to ``smooth out'' $v_{k+1}$ to obtain the modification $v'_{k+1} \colon H \to S^{D-1}$ that obeys the required property $\| \nabla v'_{k+1} \|_{C^j_K} \lesssim_{D,j} 1$. Let $\sigma > 0$ be a small constant depending only on $D$ to be chosen later, and let $r>0$ be sufficiently small depending on $\sigma,D$.  By uniform equicontinuity, we see that $v_i(p') = v_i(p) + O(\sigma)$ whenever $d(p,p') \leq r$ and $i=1,\dots,k+1$.  By applying a smooth partition of unity, we can write $1 = \sum_{\gamma \in \Gamma} \varphi(p \gamma^{-1})$ for all $p \in H$ and a smooth compactly supported function $\varphi$; dilating this by $r$, we see that $1 = \sum_{\gamma \in \delta_r \Gamma} \varphi \circ \delta_r^{-1}( p \gamma^{-1} )$.  

For $\gamma \in \delta_r \Gamma$ and $p \in H$ with $d(p,\gamma) = O(r)$, we see that 
$$v_i(p) \cdot v_{k+1}(\gamma) = v_i(p) \cdot \tilde v_{k+1}(p) + O(\sigma) = O(\sigma)$$
for $i=1,\dots,k$.  Thus, if we define
\begin{equation}\label{orange}
 \tilde v_{k+1}(p) \coloneqq \sum_{\gamma \in \delta_r \Gamma} \varphi \circ \delta_r^{-1}(p \gamma^{-1}) v_{k+1}(\gamma)
\end{equation}
then
$$v_i(p) \cdot \tilde v_{k+1}(p) = O(\sigma);$$
also, we have the derivative bounds
$$ \| \nabla \tilde v_{k+1} \|_{C^j_K} \lesssim_{j,r} \eps$$
thanks to many applications of the chain rule; since all the unit vectors $v_{k+1}(\gamma)$ that give a non-zero contribution to \eqref{orange} lie within $O(\sigma)$ of each other, we have 
$$ |\tilde v_{k+1}(p)| = 1 + O(\sigma).$$
Thus, if we apply the Gram-Schmidt process to define
$$ v'_{k+1}(p) := \frac{1}{|w_{k+1}(p)|} w_{k+1}(p)$$
where 
$$w_{k+1}(p) := \tilde v_{k+1}(p) - \sum_{i=1}^k (\tilde v_{k+1}(p) \cdot v_i(p)) v_i(p)$$
then we see that $|w_{k+1,p}| = 1 + O_D(\sigma)$, so if $\sigma$ is small enough, $v'_{k+1}(p)$ is a well-defined unit vector orthogonal to $v_1(p), \dots, v_k(p)$, and from the chain rule and product rule we obtain the bounds
$$ \| \nabla v'_{k+1} \|_{C^j_K} \lesssim_{j,r,D} 1 $$
giving the claim.  (Here the hypothesis $K<1$ is needed to ensure that the modified norm $\|v\|_{C^0} + \|\nabla v \|_{C^j_K}$ obeys the algebra property \eqref{algebra-1}; this norm can be used for instnce to control $|w_{k+1}|$ and its reciprocal.)
\end{proof}

\section{Conclusion of the argument}\label{conclude}

Now that Proposition \ref{perturb} and Corollary \ref{Steifel} are established, we can return to the proof of Proposition \ref{induct}.  We will explicitly construct a function $\tilde \phi \colon  H \to \R^{29}$ that solves a low-frequency equation \eqref{lfe} and obeys most of the properties of Proposition \ref{induct}; the final solution $\phi \colon  H \to \R^{29}$ required by Proposition \ref{induct} will then be obtained by applying Proposition \ref{perturb} with $F=0$. 

Let the notation and hypotheses be as in Proposition \ref{induct}. The function $\tilde \phi$ will take the form
\begin{equation}\label{tpho}
 \tilde \phi(p) = U(p)( \phi^0(p) )
\end{equation}
for $p \in H$, where $\phi^0 \colon H \to \R^{20}$ is the function from Proposition \ref{prop}, and $U(p) \colon \R^{20} \to \R^{29}$ is a linear isometry varying smoothly (and slowly) in $p$ in a manner dependent on $P_{\leq N_0} \psi$, and is in particular chosen to make the bilinear form $B( \tilde \phi, P_{\leq N_0} \psi)$ vanish.  Thanks to Corollary \ref{Steifel}, we can construct $U(p)$ in a straightforward fashion:

\begin{lemma}[Construction of $U$]\label{ucon}  For each $p$, there exists a linear isometry $U(p) \colon \R^{20} \to \R^{29}$ such that
\begin{equation}\label{uxp}
 (U(p) s) \cdot W P_{\leq N_0} \psi(p) = 0
\end{equation}
for all $s \in \R^{20}$ and $W \in \{ X, Y, Z, XX, YY, XY \}$ (and hence also $W = YX$, thanks to \eqref{heisenberg}).  Furthermore, $U(p)$ depends smoothly on $p$ with
$$ \| \nabla U \|_{C^{39}} \lesssim_{N_0} \frac{1}{A}.$$
\end{lemma}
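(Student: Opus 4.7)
The plan is to construct $U(p)$ as the linear isometry sending the standard basis of $\R^{20}$ to the tail $e_7(p),\ldots,e_{26}(p)$ of an orthonormal frame $e_1(p),\ldots,e_{26}(p)$ in $\R^{29}$, where the first six vectors $e_1,\ldots,e_6$ span the same subspace as the ``freeness vectors'' $\{W P_{\leq N_0}\psi(p) : W \in \{X,Y,Z,XX,YY,XY\}\}$. With this design the orthogonality relation \eqref{uxp} is automatic, and the required bound on $\|\nabla U\|_{C^{39}}$ reduces to the analogous bound on $\nabla e_7,\ldots,\nabla e_{26}$.

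To produce the initial segment of the frame, I would first use Theorem \ref{lp-theory}(iii) together with \eqref{j-hold} to establish $\|\nabla^j P_{\leq N_0}\psi\|_{C^0} \lesssim_{N_0,j} A^{-1}$ for every $j \geq 2$ (while the first derivative is only controlled by $\lesssim M$ via \eqref{xyp-comp}). Consequently the four ``second-order'' generators $ZP_{\leq N_0}\psi, XXP_{\leq N_0}\psi, YYP_{\leq N_0}\psi, XYP_{\leq N_0}\psi$ have $C^0$-norm $\lesssim_{N_0} A^{-1}$, and the $\nabla$-gradient of any of the six generators (always a Heisenberg derivative of total order at least two of $P_{\leq N_0}\psi$) has $C^{39}$-norm $\lesssim_{N_0} A^{-1}$. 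Next I would transfer the freeness estimate \eqref{free} to the mollified vectors by expanding $\bigwedge_W W P_{\leq N_0}\psi$ as $\bigwedge_W W\psi$ plus corrections involving the tails $W P_{>N_0}\psi$ (each of size $\lesssim_{N_0} N_0^{-1} A^{-1}$ by \eqref{lp-5} and \eqref{j-hold}); every such correction is dwarfed by the main lower bound $\gtrsim C_0^{-20} A^{-4} M^2$ once $N_0$ is large relative to $C_0$. Combining this with Hadamard's inequality applied to initial subwedges and with \eqref{xyp-nondeg} forces the Gram--Schmidt residuals to satisfy $|u_1|,|u_2| \gtrsim_{C_0} M$ and $|u_3|,\ldots,|u_6| \gtrsim_{C_0} A^{-1}$, and a quotient-rule computation using the bounds from the previous step then produces orthonormal $e_1,\ldots,e_6 \colon H \to S^{28}$ with $\|\nabla e_i\|_{C^{39}} \lesssim_{C_0,N_0} A^{-1}$.

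Finally, I would apply Corollary \ref{Steifel} iteratively twenty times to extend to $e_7(p),\ldots,e_{26}(p)$, which is legal because the index range $6 \leq k \leq 25$ lies within $k \leq D-4 = 25$ and the propagated bound $\lesssim_{C_0,N_0} A^{-1}$ is far below $1$ (as $A$ is chosen large depending on $C_0,N_0$). Setting $U(p)s \coloneqq \sum_{i=1}^{20} s_i e_{6+i}(p)$ then yields the required isometry. The main obstacle is the Gram--Schmidt bookkeeping of the previous paragraph: naively normalizing residuals as small as $\sim A^{-1}$ could produce unit vectors with first derivatives of order $1$ rather than $A^{-1}$, and this failure is averted only because \eqref{j-hold} provides an extra $A^{-1}$ gain on each additional derivative of $P_{\leq N_0}\psi$, making every Heisenberg derivative of order at least three $\lesssim A^{-2}$, exactly enough to survive dividing by $|u_i|\sim A^{-1}$; for $i=1,2$ a similar cancellation occurs between the $M$ factor in the residual norm and the $\lesssim A^{-1}$ gradient of the (second-order) numerator.
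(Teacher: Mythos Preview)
Your proposal is correct and follows essentially the same route as the paper: apply Gram--Schmidt to the six freeness vectors $W P_{\leq N_0}\psi$, then invoke Corollary~\ref{Steifel} twenty times to extend the frame, and read off $U$ from the last twenty vectors. The only cosmetic difference is that the paper first \emph{rescales} the generators to $M^{-1}X,\,M^{-1}Y,\,AZ,\,AXX,\,AYY,\,AXY$, so that all six inputs $w_i$ to Gram--Schmidt satisfy the uniform bound $\|w_i\|_{C^0}+A\|\nabla w_i\|_{C^{39}}\lesssim_{N_0}1$ and the wedge products are $\sim_{N_0}1$; this turns the quotient-rule bookkeeping you describe in your final paragraph into a straightforward algebra-property argument, and avoids having to track the separate scales $M$ and $A^{-1}$ through each stage of the orthonormalization.
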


\begin{proof}  Let $W_1,W_2,W_3,W_4,W_5,W_6$ denote the rescaled differential operators 
$$M^{-1} X,M^{-1} Y,A Z,A XX,A YY,A XY$$
respectively.  For each $p$, let $(v_1(p), \dots, v_6(p)) \in V_{6,29} $ be the orthonormal system formed by applying the Gram-Schmidt process to the vectors 
$w_i(p) := W_i P_{\leq N_0} \psi(p)$ for $i=1,\dots,6$, thus (omitting dependence on $p$ for brevity)
$$ v_i \coloneqq \frac{|\bigwedge_{j<i} w_j|}{|\bigwedge_{j \leq i} w_j|} \left( w_i - \sum_{j<i} (w_i \cdot v_j) v_j \right)$$
for $i=1,\dots,6$.  From \eqref{j-hold}, \eqref{xyp-comp}, Theorem \ref{lp-theory}(iii) one has
$$ w_i = W_i \psi + O_{N_0}( A^{-1} M^{-1} ) = O_{N_0}(1)$$
for $i=1,2$, and
$$ w_i = W_i \psi + O_{N_0}( A^{-1} ) = O_{N_0}(1)$$
for $i=3,4,5,6$.  Thus, we have $w_i = O_{N_0}(1)$ for all $1 \leq i \leq 6$.
From \eqref{free} and the triangle inequality we have
$$\left|\bigwedge_{j \leq 6} w_j\right| \gtrsim_{N_0} 1$$
and then by Cauchy-Schwarz we also have
$$
\left|\bigwedge_{j \leq i} w_j\right| \sim_{N_0} 1
$$
for all $0 \leq i \leq 6$ (cf. the proof of Lemma \ref{tpsi}).  Also, from \eqref{j-hold}, Theorem \ref{lp-theory}(iii) one has the bounds
$$ \|w_i \|_{C^0} + A \| \nabla w_i \|_{C^{39}} \lesssim_{N_0} 1$$
for $1 \leq i \leq 6$.  In particular, from \eqref{algebra-1} one has
$$ \left\|\bigwedge_{j \leq i} w_j\right\|_{C^0} + A \left\|\nabla \bigwedge_{j \leq i} w_j\right\|_{C^{39}}  \sim_{N_0} 1.$$

From these bounds and the quotient and product rules, we see from an induction on $i$ that
$$
\|v_i\|_{C^0} + A \|\nabla v_i \|_{C^{39}} \lesssim_{N_0} 1$$
for $i=1,\dots,6$.  Applying Corollary \ref{Steifel} $20$ times, noting that $6+20 = 29-3$, we may then find smooth maps $v_7,\dots,v_{26} \colon H \to \R^{29}$ such that
$$
\|v_i\|_{C^0} + A \|\nabla v_i \|_{C^{39}} \lesssim_{N_0} 1
$$
for $i=1,\dots,26$, and such that $v_1(p),\dots,v_{26}(p)$ are orthonormal for all $p \in H$.  If we then define $U(p)$ to be the map
$$
U(p)(s_1,\dots,s_{20}) \coloneqq \sum_{i=1}^{20} s_i v_{i+6}(p)$$
then the claim follows.
\end{proof}

Now define $\tilde \phi \colon  H \to \R^{29}$ by the formula \eqref{tpho}.  From Lemma \ref{ucon}, \eqref{tpho}, \eqref{algebra-1}, and Proposition \ref{prop} we have
\begin{equation}\label{tpc}
 \| \tilde \phi \|_{C^{40}} \lesssim 1.
\end{equation}
Next, we compute $B( \tilde \phi, P_{\leq N_0} \psi )$.  The first component $X \tilde \phi \cdot X P_{\leq N_0} \phi$ expands using the product rule as
$$ X (\tilde \phi \cdot X P_{\leq N_0} \phi) - \tilde \phi \cdot XX P_{\leq N_0} \phi.$$
But both terms vanish thanks to \eqref{uxp}, \eqref{tpho}.  Similarly for the second component of $B(\tilde \phi, P_{\leq N_0} \psi)$, and so we have the low frequency equation \eqref{lfe}.  We may now apply Proposition \ref{perturb} to locate a smooth solution $\phi \colon  H \to \R^{29}$ to the equation \eqref{x-ortho} with
\begin{equation}\label{ax-new}
 \| \phi - \tilde \phi \|_{C^{20,\alpha}} \lesssim_{C_0} A^{-10}
\end{equation}
and 
\begin{equation}\label{ax-2-new}
 \| X(\phi-\tilde \phi) \cdot Y \psi \|_{C^0}, \| Y(\phi-\tilde \phi) \cdot X \psi \|_{C^0} \lesssim_{C_0} A^{-10}.
\end{equation}
To finish the proof of Theorem \ref{induct}, we need to verify the conclusions \eqref{xyp}-\eqref{x-ortho} of that theorem.  The claim \eqref{x-ortho} was obtained by construction, and the claim \eqref{j-hold-phi} is immediate from \eqref{ax-new}, \eqref{tpc}.  Now we turn to \eqref{xyp}.  For any $p \in H$, we see from \eqref{ax-new} that
$$ X \phi(p) = X \tilde \phi(p) + O_{N_0}(A^{-10}).$$
From \eqref{tpho}, the product rule, Proposition \ref{prop}, and Lemma \ref{ucon} we have
$$X \tilde \phi(p) = U(p)( X \phi_0(p)) + O_{N_0}(A^{-1}).$$
From Proposition \ref{prop} we have $|X \phi_0(p)| \sim 1$.  Since $U(p)$ is an isometry, we conclude \eqref{xyp} for $X \phi(p)$, and a similar argument gives \eqref{xyp} for $Y \phi(p)$ also.  For future reference, we observe that this argument and Proposition \ref{prop} also gives the bound
\begin{equation}\label{xyphi}
 |X \phi(p) \wedge Y \phi(p)| \sim 1.
\end{equation}

Now we establish the delicate estimate \eqref{more-ortho}. Fix $p \in H$; for brevity we omit the explicit dependence on $p$.  We begin\footnote{We thank the anonymous referee for this simplified version of the argument.} with an estimation of the inner product $X \phi \cdot Y \psi$ (and also $X \psi \cdot Y \phi$).  
The expression
$$X \tilde \phi \cdot Y P_{\leq N_0} \psi = X ( U \phi_0 \cdot Y P_{\leq N_0} \psi ) - U \phi_0 \cdot XY P_{\leq N_0}\psi$$
vanishes by Proposition \ref{ucon}, and hence
$$ X \phi \cdot Y \psi = X(\phi -\tilde \phi) \cdot Y \psi + X \tilde \phi \cdot Y P_{>N_0} \psi.$$
By \eqref{ax-2-new}, the first term on the right-hand side is $O_{N_0}( A^{-10} )$.  From Theorem \ref{lp-theory}(iii), \eqref{j-hold} one has
\begin{equation}\label{yo}
|Y P_{>N_0} \psi| \lesssim \| \nabla^2 \psi \|_{C^0} \lesssim_{C_0} A^{-1}
\end{equation}
and hence by \eqref{tpc}
$$
 |X \tilde \phi \cdot Y P_{>N_0} \psi| \lesssim_{C_0} A^{-1}.
$$
From this (and analogous arguments for $X \psi \cdot Y \phi$ we conclude that
\begin{equation}\label{xy1}
 X \phi \cdot Y \psi, X \psi \cdot Y \phi = O_{C_0}\left(\frac{1}{A} \right).
\end{equation}
Also from \eqref{x-ortho} we also have
\begin{equation}\label{xy2}
 X \phi \cdot X \psi = Y \psi \cdot Y \phi = 0.
\end{equation}  
Meanwhile, from \eqref{xyp-comp}, \eqref{j-hold-phi} we have
\begin{equation}\label{xy3}
 X \psi, Y \psi = O_{C_0}(M); \quad X \phi, Y \phi = O(1).
\end{equation}

We split 
$$X(\psi+\phi) \wedge Y(\psi+\phi) = X \psi \wedge Y \psi + (X \psi \wedge Y \phi + X \phi \wedge Y \psi + X \phi \wedge Y \phi)$$
and hence by the cosine rule
\begin{align*}
 |X(\psi+\phi) \wedge Y(\psi+\phi)|^2 - |X \psi \wedge Y \psi|^2 &= |X \psi \wedge Y \phi + X \phi \wedge Y \psi + X \phi \wedge Y \phi|^2 \\
&\quad - 2 \langle X \psi \wedge Y \psi, X \psi \wedge Y \phi + X \phi \wedge Y \psi + X \phi \wedge Y \phi\rangle.
\end{align*}

From \eqref{xy1}, \eqref{xy2}, \eqref{xy3}, and the depolarised Cauchy-Binet formula \eqref{depolar} we have
$$\langle X \psi \wedge Y \psi, X \psi \wedge Y \phi + X \phi \wedge Y \psi + X \phi \wedge Y \phi\rangle = O_{C_0} \left( \frac{M^2}{A} \right)$$
and so it will suffice to show that
$$ |X \psi \wedge Y \phi + X \phi \wedge Y \psi + X \phi \wedge Y \phi| \gtrsim C_0^{-2} M.$$
Taking wedge products with $Y\phi$ and using Cauchy-Schwarz and \eqref{xy3}, it suffices to show that
$$ |X \phi \wedge Y \phi \wedge Y \psi|^2 \gtrsim C_0^{-4} M^2.$$
From two applications of the Cauchy-Binet formula \eqref{depolar} together with \eqref{xy1}, \eqref{xy2}, \eqref{xy3} we see that
$$ |X \phi \wedge Y \phi \wedge Y \psi|^2  = |X \phi \wedge Y \phi|^2 |Y \psi|^2 + O_{C_0} \left( \frac{1}{A} \right ).$$
The claim now follows from \eqref{xyp-comp}, \eqref{xyphi}.

Finally, we verify \eqref{free-inc}.  From \eqref{j-hold}, \eqref{ax-new}, \eqref{tpho}, and Lemma \ref{ucon} we have (omitting dependence on $p$ for brevity)
$$
 W(\psi + \phi) = O_{C_0}(A^{-1}) + W \tilde \phi + O(A^{-10}) = U( W \phi_0 ) + O_{N_0}(A^{-1})$$
for all $W \in \{Z, XX, YY, XY \}$.
From Proposition \ref{prop}, $W \phi_0$ has norm $\sim 1$.  Thus
$$
\bigwedge_{W = Z, XX, YY, XY} W (\psi+\phi) = \omega + O_{N_0}(A^{-1})$$
where
$$ \omega \coloneqq \bigwedge_{W = Z, XX, YY, XY} U(W \phi_0) $$
and so (since $X(\psi+\phi), Y(\psi+\phi) = O_{C_0}(M)$) it will suffice to establish the bound
$$
 |X(\psi+\phi) \wedge Y(\psi+\phi) \wedge \omega| \gtrsim C_0^{-12} M^2.$$
Note from Proposition \ref{prop} that $\omega = O(1)$.  By Cauchy-Schwarz and \eqref{xyp-comp} it will thus suffice to show that
\begin{equation}\label{left}
\langle X P_{\leq N_0} \psi \wedge Y P_{\leq N_0} \psi \wedge \omega, X(\psi+\phi) \wedge Y(\psi+\phi) \wedge \omega \rangle \gtrsim C_0^{-8} M^4.\end{equation}
By Lemma \ref{ucon}, all the vectors $U(W \phi_0)$ comprising $\omega$ are orthogonal to both $X P_{\leq N_0} \psi$ and $Y P_{\leq N_0} \psi$.  Using the Cauchy-Binet formula \eqref{depolar} twice, the left-hand side can then be written as
$$
\langle X P_{\leq N_0} \psi \wedge Y P_{\leq N_0} \psi, X(\psi+\phi) \wedge Y(\psi+\phi) \rangle |\omega|^2.
$$
As $U$ is an isometry, we see from Proposition \ref{prop} that $|\omega| \gtrsim 1$.  Meanwhile, from \eqref{xy1}, \eqref{xy2}, \eqref{xyp-comp}, \eqref{yo} (and the analogue for $X P_{>N_0} \psi$) we have
$$ W P_{\leq N_0} \psi  \cdot W'(\psi+\phi) = W \psi \cdot W' \psi + O_{C_0}\left( \frac{M}{A} \right)$$
for $W,W' \in \{X,Y\}$, hence by \eqref{depolar} (and \eqref{xyp-comp}) again
$$
\langle X P_{\leq N_0} \psi \wedge Y P_{\leq N_0} \psi, X(\psi+\phi) \wedge Y(\psi+\phi) \rangle = |X \psi \wedge Y \psi|^2 +
O_{C_0}\left( \frac{M^3}{A} \right).
$$
The claim now follows from \eqref{xyp-nondeg}.  This (finally!) concludes the proof of Proposition \ref{induct} and thus Theorem \ref{th1}.



\projects{\noindent The author was supported by a Simons Investigator grant, the James and Carol Collins Chair, the Mathematical Analysis \&
Application Research Fund Endowment, and by NSF grant DMS-1266164.}


\begin{thebibliography}{99}
\parskip=0.7pt



\bibitem{assouad}
\RMIauthor{P. Assouad}
\RMIpaper{Plongements lipschitziens dans $\R^n$}
\RMIjournal{Bull. Soc. Math. France \textbf{111}  (1983), no. 4, 429--448}

\bibitem{austin}
\RMIauthor{T. Austin, A. Naor, R. Tessera}
\RMIpaper{Sharp quantitative nonembeddability of the Heisenberg group into superreflexive Banach spaces}
\RMIjournal{Groups, Geometry, and Dynamics, \textbf{7} (2013), 497--522}

\bibitem{bg2}
\RMIauthor{H. Bahouri, I. Gallagher}
\RMIpaper{Paraproduit sur le groupe de Heisenberg et applications}
\RMIjournal{Rev. Math. Iberoam. \textbf{17} (2001), 69--105}

\bibitem{bg1}
\RMIauthor{H. Bahouri, P. G\'erard, C.-J. Xu}
\RMIpaper{Espaces le Besov et estimations de Strichartz g\'eneralis\'ees sur le groupe de Heisenberg}
\RMIjournal{J. Anal. Math. \textbf{82} (2000), 93--118}


\bibitem{cdw}
\RMIauthor{G. Chambers, D. Dotterer, F. Manin, S. Weinberger}
\RMIpaper{Quantitative null-cobordism. With an appendix by Manin and Weinberger}
\RMIjournal{J. Amer. Math. Soc. \textbf{31} (2018), no. 4, 1165--1203}

\bibitem{chambers}
\RMIauthor{G. Chambers, F. Manin, S. Weinberger}
\RMIpaper{Quantitative nullhomotopy and rational homotopy type}
\RMIjournal{Geom. Funct. Anal. \textbf{28} (2018), no. 3, 563--588} 

\bibitem{delellis}
\RMIauthor{C. De Lellis}
\RMIpaper{The masterpieces of John Forbes Nash Jr.}
\RMIjournal{\tt arXiv:1606.02551}

\bibitem{gromov}
\RMIauthor{M. Gromov}
\RMIpaper{Quantitative homotopy theory}
\RMIbook{Prospects in mathematics (Princeton NJ 1996), 4549, Amer. Math. Soc., Providence RI, 1999}

\bibitem{gunther}
\RMIauthor{M. G\"unther}
\RMIpaper{Isometric embeddings of Riemannian manifolds}
\RMIbook{Proceedings of the International Congress of Mathematicians, Vol. I, II (Kyoto, 1990), 1137--1143, Math. Soc. Japan, Tokyo, 1991}

\bibitem{heinonen}
\RMIauthor{J. Heinonen}, \\
\RMIpaper{Geometric embeddings of metric spaces}
\RMIjournal{Report. University of Jyv\"askyl\"a Department of Mathematics and Statistics \textbf{90}, University of Jyv\"askyl\"a, Jyv\"askyl\"a, 2003.}

\bibitem{hulanicki}
\RMIauthor{A. Hulanicki}
\RMIpaper{A functional calculus for Rockland operators on nilpotent Lie groups}
\RMIjournal{Studia Math. \textbf{78} (1984), 253--266}

\bibitem{kahane}
\RMIauthor{J.-P. Kahane}
\RMIpaper{H\'elices et quasi-h\'elices}
\RMIjournal{Mathematical analysis and applications, Part B}
\RMIbook{417--433, Adv. in Math. Suppl. Stud., 7b, Academic Press, New York-London, 1981}


\bibitem{rodnianski}
\RMIauthor{S. Klainerman, I. Rodnianski}
\RMIpaper{A geometric approach to the Littlewood-Paley theory}
\RMIjournal{Geom. Funct. Anal. \textbf{16} (2006), no. 1, 126--163}

\bibitem{lafforgue}
\RMIauthor{V. Lafforgue, A. Naor}
\RMIpaper{A doubling subset of $L^p$ for $p>2$ that is inherently infinite dimensional}
\RMIjournal{Geometriae Dedicata \textbf{172} (2014), 387--398}

\bibitem{lang}
\RMIauthor{U. Lang, C. Plaut}
\RMIpaper{Bilipschitz embeddings of metric spaces into space forms}
\RMIjournal{Geom. Dedicata \textbf{87} (2001), 285--307}

\bibitem{lee}
\RMIauthor{J. R. Lee, A. Naor}
\RMIpaper{$L^p$ metrics on the Heisenberg group and the Goemans-Linial conjecture}
\RMIjournal{FOCS (2006), 99--108}

\bibitem{moser}
\RMIauthor{J. Moser}
\RMIpaper{A rapidly convergent iteration method and non-linear partial differential equations. I, II}
\RMIjournal{Ann. Scuola Norm. Sup. Pisa \textbf{20} (1966), 265--315, 499-535}

\bibitem{neiman}
\RMIauthor{A. Naor, O. Neiman}
\RMIpaper{Assouad's theorem with dimension independent of the snowflaking}
\RMIjournal{Revista Matematica Iberoamericana \textbf{28} (2012), 1123--1142}

\bibitem{nashk}
\RMIauthor{J. Nash}
\RMIpaper{$C^1$ isometric emebddings}
\RMIjournal{Ann. of Math. \textbf{60} (1954), 383--396}

\bibitem{nash}
\RMIauthor{J. Nash}
\RMIpaper{The imbedding problem for Riemannian manifolds}
\RMIjournal{Annals of Mathematics \textbf{63} (1956), 20--63}

\bibitem{pansu}
\RMIauthor{P. Pansu}
\RMIpaper{M\'etriques de Carnot-Carath\'eodory et quasiisom\'etries des espaces sym\'etriques de rang un}
\RMIjournal{Ann. of Math. \textbf{129} (1989), 1--60}
	
\bibitem{semmes}
\RMIauthor{S. Semmes}
\RMIpaper{On the nonexistence of bi-Lipschitz parameterizations and geometric problems about $A^\infty$-
weights}
\RMIjournal{Rev. Mat. Iberoamericana \textbf{12} (1996), 337--410}

\bibitem{talagrand}
\RMIauthor{M. Talagrand}
\RMIpaper{Approximating a helix in finitely many dimensions}
\RMIjournal{Ann. Inst. H. Poincar\'e Probab. Statist. \textbf{28} (1992), no. 3, 355--363}

\bibitem{thang}
\RMIauthor{S. Thangavelu}
\RMIpaper{A multiplier theorem for the sublaplacian on the Heisenberg group}
\RMIjournal{Proc. Indian Acad. Sci. (Math. Sci.) \textbf{101} (1991), 169--177}

\bibitem{uchiy}
\RMIauthor{A. Uchiyama}
\RMIpaper{A constructive proof of the Fefferman-Stein decomposition of $BMO(R^n)$}
\RMIjournal{Acta Mathematica \textbf{148} (1982), 215--241}


\bibitem{whitehead}
\RMIauthor{G. W. Whitehead}
\RMIpaper{Note on cross-sections in Stiefel manifolds}
\RMIjournal{Comment. Math. Helv. \textbf{37} (1963), 239--240}

\bibitem{whitney}
\RMIauthor{H. Whitney}
\RMIpaper{The singularities of a smooth $n$-manifold in $(2n-1)$-space}
\RMIjournal{Ann. of Math. (2) \textbf{45} (1944), 247--293}










\end{thebibliography}
\end{document}